\documentclass[aos,preprint]{imsart}

\RequirePackage{amsthm,amsmath,amsfonts,amssymb}
\RequirePackage[authoryear]{natbib} 
\RequirePackage[colorlinks,citecolor=blue,urlcolor=blue]{hyperref}
\RequirePackage{graphicx}

\usepackage{enumitem}
\usepackage{xcolor}
\usepackage[normalem]{ulem}

\startlocaldefs

\newtheorem{theorem}{Theorem}[section]
\newtheorem{lemma}[theorem]{Lemma}
\newtheorem{corollary}{Corollary}[theorem]
\newtheorem{proposition}[theorem]{Proposition}
\theoremstyle{remark}

\newtheorem{remark}{Remark}
\DeclareMathOperator*{\argmax}{arg\,max}
\newcommand{\btheta}{\boldsymbol{\theta}}
\newcommand{\asconv}{\stackrel{a.s.}{\rightarrow}}
\newcommand{\iidY}{\stackrel{iid}{\sim}}
\newcommand{\sumN}{\sum_{i=1}^n}
\usepackage{xparse}
\NewDocumentCommand{\evalat}{sO{\big}mm}{%
  \IfBooleanTF{#1}
   {\mleft. #3 \mright|_{#4}}
   {#3#2|_{#4}}%
}

\definecolor{Mulberry}{rgb}{0.77, 0.29, 0.55}
\definecolor{blue}{rgb}{0.0, 0.0, 1.0}

\newcommand\numberthis{\addtocounter{equation}{1}\tag{\theequation}}
\endlocaldefs

\begin{document}

\begin{frontmatter}
\title{Uniqueness and global optimality of the maximum likelihood estimator for the generalized extreme value distribution\thanksref{t1}}
\runtitle{Unique and global optimality of the GEV MLE}
\thankstext{T1}{Supported in part by NSF grant DMS-2001433.}

\begin{aug}
\author[A]{\fnms{Likun} \snm{Zhang}\ead[label=e1,mark]{likunz@psu.edu}},
\author[B]{\fnms{Benjamin} \snm{Shaby}\ead[label=e2]{bshaby@colostate.edu}}
\address[A]{Department of Statistics,
Pennsylvania State University,
\printead{e1}}

\address[B]{Department of Statistics,
Colorado State University,
\printead{e2}
}
\end{aug}

\begin{abstract}
The three-parameter generalized extreme value distribution arises from classical univariate extreme value theory and is in common use for analyzing the far tail of observed phenomena.  Curiously, important asymptotic properties of likelihood-based estimation under this standard model have yet to be established. In this paper, we formally prove that the maximum likelihood estimator is global and unique. An interesting secondary result entails the uniform consistency of a class of limit relations in a tight neighborhood of the shape parameter. 
\end{abstract}


\begin{keyword}
\kwd{Block maximum}
\kwd{Convergence rate}
\kwd{Global maximum}
\kwd{Law of large numbers}
\kwd{Profile likelihood}
\kwd{Support}
\end{keyword}

\end{frontmatter}

\section{Introduction}\label{sec:intro}

Classical extreme value theory was introduced almost a century ago \citep{fisher1928limiting} and is in wide practical use, yet a basic theoretical elucidation of likelihood-based inference under its central distributional construct remains incomplete.  Here, we fill in some of the important gaps. The generalized extreme value (GEV) distribution arises as the only limit of suitably renormalized maxima over independent and identically distributed (iid) random variables, and has therefore routinely been used in modeling the tail behavior of observed phenomena. However, since the support of the distribution depends on its parameters, standard regularity conditions of classic asymptotic theory are not satisfied. It is only recently that consistency and asymptotic normality of the maximum likelihood estimator (MLE), found locally on a restricted compact set, have been established. In this paper, we show that the local MLE uniquely and globally maximizes the GEV log-likelihood function.  In addition, we establish a number of convergence properties related to the GEV, including uniform consistency of a class of limit relations, revealing a much richer understanding of the GEV likelihood than has previously appeared.

The family of GEV distributions forms a continuous parametric family with respect to $\btheta=(\tau,\mu,\xi)$ on some measurable space $(\mathcal{X},\mathcal{A})$,
\begin{align*}
    P_{\btheta}(y)&=\exp\left\{-\left[1+\xi\left(\frac{y-\mu}{\tau}\right)\right]^{-1/\xi}\right\}\qquad \xi\neq 0\\
    &=\exp\left\{-\exp\left[-\frac{y-\mu}{\tau}\right]\right\}\qquad \xi=0,
\end{align*}
where $1+\xi\left(\frac{y-\mu}{\tau}\right)>0$ for $\xi\neq 0$,  and the scale parameter $\tau>0$, location parameter $\mu \in\mathbb{R}$, shape parameter $\xi\in\mathbb{R}$. The GEV distribution unites the Gumbel, Fr\'{e}chet and Weibull distributions into a single family to allow various shapes.  

The estimation of GEV parameters, especially the shape parameter $\xi$, is pivotal in studying tail behavior. The Picklands estimators \citep{pickands1975statistical}, probability weighted moments estimators \citep{hosking1985estimation}, and method of moments quantile estimators \citep{madsen1997comparison} are among many estimators available in the literature. \citet{beirlant2006statistics} provides a detailed review of the aforementioned estimators. In this paper, we focus on the asymptotic properties of estimators obtained from the maximum likelihood method. Denote $p_{\btheta}$ as the density function of $P_{\btheta}$ with respect to some dominating measure $\mathcal{P}$. Since the support of the GEV density function is not independent of $\btheta$, the regularity conditions for standard likelihood inference do not hold, which gives rise to fundamental difficulties when studying the existence, consistency and asymptotic normality of the MLE.

Suppose $\btheta_0=(\tau_0,\mu_0,\xi_0)$ is the true parameter, and $Y_1,\ldots, Y_n$ are independent samples from $P_{\btheta_0}$. \citet{smith1985maximum} was the first to consider the MLE of a large class of irregular parametric families, whose formulation includes the GEV distribution when $-1<\xi_0<0$. Treating the samples as coming from a distribution in the domain of attraction of a GEV, \citet{dombry2015existence} derived the existence of \textit{local} MLE, which is implicitly defined as solutions of the score functions, under the setting of triangular arrays of block maxima when $\xi_0>-1$. He proved that for any fixed compact set $K\subset\{\btheta:\tau>0,\mu\in \mathbb{R},\xi>-1\}$ that contains $\btheta_0$, the maximum of the likelihood function in $K$ is confined in an arbitrarily smaller neighborhood of $\btheta_0$, $\tilde{K}$, for all $n$ large enough. The corresponding MLE $\hat{\btheta}_n=(\hat{\xi}_n,\hat{\mu}_n,\hat{\tau}_n)$ solves the score functions, and it converges almost surely to $\btheta_0$. 

\citet{bucher2017maximum} extended this result, in the simpler setting where $Y_1,\ldots, Y_n$ are assumed to be independent samples from univariate GEV distribution, establishing a $O_p(1/\sqrt{n})$ rate of convergence for the local MLE, and refining the incomplete proof of \citet{smith1985maximum} to establish the asymptotic normality of $\hat{\btheta}_n$ for $\xi_0>-1/2$. 

However, it is not guaranteed that the local MLE $\hat{\btheta}_n$ studied by \citet{dombry2015existence} and \citet{bucher2017maximum} attains a unique, global maximum of the log-likelihood
\begin{equation*}
    L_n(\btheta)=\sumN l_{\btheta}(Y_i),
\end{equation*}
in which $l_{\btheta}: \btheta\mapsto \log p_{\btheta}(y)$, and $\btheta\in \Omega_n=\{\btheta:p_{\btheta}(Y_i)>0, i=1,\ldots,n\}$. Among other things, the uniform and global properties of $L_n$ in $\Omega_n$ are needed in Bayesian theory to develop optimal decision rules and perform posterior-based inference \citep{hartigan2012bayes}, to establish asymptotic posterior normality \citep{von1931wahrscheinlichkeit,chen1985asymptotic}, and to construct rule-based noninformative priors \citep{bernardo2005reference}. If $L_n$ is highly peaked and concentrates in a small neighborhood of $\btheta_0$, the information contained in the observations $Y_1,\ldots, Y_n$ will dominate any prior knowledge as $n$ approaches infinity, and hence posterior distribution will behave like a normal kernel.

In this paper, we only consider $\btheta_0\in \Theta= (0,\infty)\times \mathbb{R}\times (-1/2,\infty)$. We will prove that the local MLE gives a unique, global maximum point for the log-likelihood function by following a two-step strategy: 
\begin{enumerate}[label=\textbf{(\Roman*)},ref=(\Roman*)]
    \item\label{step2} 
    We first construct a small compact set $\tilde{K}$ containing $\btheta_0$ in its interior, and prove that for all large $n$, $L_n$ in $\tilde{K}$ is strictly concave and attains a unique maximum point;
    \item\label{step1}  We then define a larger compact set $K$, explicitly defined in terms of $\btheta_0$, such that $\tilde{K}\subsetneq K$. We prove for all large $n$, global maximum must be attained in $K$; that is, $\argmax_{\btheta\in \Theta}L_n(\btheta)\in K$.
\end{enumerate}
By Proposition 2 in \citet{dombry2015existence}, $\hat{\btheta}_n=\argmax_{\btheta\in K}L_n(\btheta)\in \tilde{K}$ for all large $n$. We will therefore conclude that $L_n(\hat{\btheta}_n)$ is indeed the unique and global maximum $L_n$---the global optimality is ensured by \ref{step1}, while the uniqueness is ensured by \ref{step2}. This main result is stated in the following theorem.

\begin{theorem}[Global optimality and uniqueness]\label{thm:global_mle}
Suppose $Y_1, Y_2, \ldots\iidY P_{\btheta_0}$, and $\hat{\btheta}_n$ is the sequence of local maxima of $L_n$ that is found on a fixed compact neighborhood of $\btheta_0$. Define $\Theta_n=\{\btheta\in \Theta: \xi<n-1\}$. Then 
$$\mathcal{P}\left(\exists N>0\text{ such that for all } n>N,\;\argmax_{\btheta\in \Theta_n}L_n(\btheta)\text{ is unique and equals $\hat{\btheta}_n$}\right)=1.$$
\end{theorem}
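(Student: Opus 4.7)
The plan is to execute the two-step strategy (I)--(II) announced in the introduction. For step (I), I would take $\tilde{K}$ to be a small closed neighborhood of $\btheta_0$ and show that $L_n$ is strictly concave on $\tilde{K}$ for all $n$ large almost surely, which immediately forces a unique maximizer there. Since $\xi_0>-1/2$, the Fisher information $I(\btheta_0)=-E_{\btheta_0}\nabla^2 l_{\btheta_0}(Y)$ is positive definite (the standard Smith/B\"ucher--Segers regime). Dominated convergence with an integrable envelope on a small enough $\tilde{K}$ yields continuity of $\btheta\mapsto E_{\btheta_0}\nabla^2 l_{\btheta}(Y)$, so shrinking $\tilde{K}$ if necessary keeps this population Hessian uniformly negative definite. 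A uniform LLN on $\tilde{K}$---precisely the type of uniform limit relation the abstract advertises---then transfers uniform negative definiteness to the empirical Hessian $\tfrac{1}{n}\nabla^2 L_n$ for all large $n$. Combined with the almost sure convergence $\hat{\btheta}_n\to\btheta_0$ from \citet{dombry2015existence}, the interior point $\hat{\btheta}_n$ is then the unique maximizer of $L_n$ on $\tilde{K}$.

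For step (II), I would take $K$ to be a larger compact set containing $\tilde{K}$ and show that $\sup_{\btheta\in\Theta_n\setminus K}L_n(\btheta)<L_n(\btheta_0)\le L_n(\hat{\btheta}_n)$ almost surely for all $n$ large. The natural approach is to partition $\Theta_n\setminus K$ by which boundary component of $\Theta_n$ the escaping parameter approaches---$\tau\to\infty$, $\tau\to 0^+$, $|\mu|\to\infty$, $\xi\downarrow -1/2$, $\xi\uparrow n-1$---and to upper-bound $L_n$ separately on each piece. Pointwise estimates on $l_{\btheta}(Y_i)$ handle the $\tau\to\infty$ zone, because $\sup_y l_{\btheta}(y)$ contains a $-\log\tau$ factor, as well as the $\tau\to 0^+$ and $|\mu|\to\infty$ zones, where the support of $P_{\btheta}$ eventually fails to contain every $Y_i$ and $L_n$ is forced to $-\infty$. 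The two shape-parameter regimes require finer analysis: in each, one expects a uniform estimate $\tfrac{1}{n}L_n(\btheta)\lesssim E_{\btheta_0}l_{\btheta}(Y)$, which by identifiability of the GEV family is strictly less than $E_{\btheta_0}l_{\btheta_0}(Y)$.

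The crux will be step (II) at the two shape-parameter boundaries. Near $\xi=-1/2$ the second-moment machinery breaks down, so bounds must come from direct density manipulation rather than ULLN tools that assume finite Fisher information. The upper envelope $\xi<n-1$ appears to be engineered precisely so that, beyond some fixed large $\xi^{*}$, one can establish an estimate of the form $l_{\btheta}(y)\le -\log\xi+O(1)$ uniformly in $(\mu,\tau)$ on compact subsets and all relevant $y$, giving $L_n(\btheta)\le -n\log\xi+O(n)<L_n(\btheta_0)$ throughout $\xi^{*}\le\xi<n-1$; extracting this bound with the required joint uniformity in $(\mu,\tau,\xi)$, while ruling out pathologies where the support of $P_{\btheta}$ drifts toward the observations and inflates individual density values, is where the bulk of the technical work lies and is very likely where the ``uniform consistency of limit relations in a tight neighborhood of the shape parameter'' is applied.
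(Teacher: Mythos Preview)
Your Step~(I) is close to the paper's in spirit, but you are missing a real obstruction. Because the GEV support depends on $\btheta$, the domain $\Omega_n$ of $L_n$ is data-dependent and its boundary converges to $\btheta_0$ (at rate $\approx 1/\log^{\xi_0}n$ for $\xi_0>0$ and $\approx n^{\xi_0}$ for $\xi_0<0$; Proposition~\ref{prop:rate_Min}). Consequently \emph{no} fixed neighborhood $\tilde K$ of $\btheta_0$ stays inside $\Omega_n$, and the Hessian components blow up as $\btheta$ approaches $\partial\Omega_n$, so a routine dominated-convergence/ULLN argument on a fixed $\tilde K$ does not go through. The paper works on $\tilde K\cap\Omega_n$ and obtains Lipschitz control of the Hessian there (Proposition~\ref{prop:hessian}) only after comparing the boundary rate with the $1/\sqrt n$ rate of $\hat\btheta_n$ (Corollary~\ref{rate_comp}); your sketch would need the same ingredient.

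The substantive gap is in Step~(II), and your intuition about large $\xi$ is backward. The restriction $\xi<n-1$ is not ``engineered'' to make $L_n$ small there; it is imposed because $\sup_{(\tau,\mu)}L_n(\tau,\mu,\xi)=\infty$ once $\xi\ge n-1$ (Remark~\ref{remark:PL_n_0}), and indeed $PL_n'(\xi)\to+\infty$ as $\xi\nearrow n-1$ (Proposition~\ref{prop:xi_derivative}). A pointwise bound of the form $l_\btheta(y)\le-\log\xi+O(1)$ cannot hold uniformly in $(\mu,\tau)$: by pushing $\beta=\mu-\tau/\xi$ toward $Y_{(1)}$ one makes individual density values arbitrarily large. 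More generally, your plan to partition $\Theta_n\setminus K$ by escaping direction and bound $L_n$ on each piece runs into the difficulty that the maximizer in $(\mu,\tau)$ on a given $\xi$-slice may itself drift toward the support boundary. The paper sidesteps this by first proving that on every slice $\Omega_n(\xi)$ with $-1<\xi<n-1$ the log-likelihood has a \emph{unique} maximizer $(\tau_n(\xi),\mu_n(\xi))$ (Proposition~\ref{prop:profile_lik_uniqueML}, via the Seitz inequality), reducing Step~(II) to a one-dimensional comparison of $PL_n(\xi)$ with $PL_n(\hat\xi_n)$. That comparison is then carried out through integral inequalities on $PL_n'$ (Lemmas~\ref{lem:monotone_on_xi} and~\ref{lem:A_n2}) combined with the uniform consistency of Proposition~\ref{prop:uniform_consistency}; the uniform-limit-relation result is used here, not in the large-$\xi$ tail as you conjecture. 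Without the profile-likelihood reduction, your boundary-by-boundary program does not have a viable route through the $\xi$ direction.
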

\begin{remark}
Later we will show that for any $n$ and a sequence of numbers $Y_1,\ldots,Y_n$ that are not equal, $\max_{\btheta\in\{\xi\geq n-1\}}L_n(\btheta)=\infty$. One may object that the optimality result is not truly global because of the restriction $\xi<n-1$. Considering the shape parameters are less than $1$ for most observed data-generating processes, the ever-expanding $\Theta_n$ is hardly a restriction, and does not interfere with the derivation of asymptotic posterior properties. 
\end{remark}
\begin{remark}
Our asymptotic results are probabilistic in nature---that is, we do not treat the observed $Y_1,Y_2,\ldots$ as a deterministic sequence \citep[as opposed to][e.g.]{chen1985asymptotic}. Under the dominating measure $\mathcal{P}$, the desired properties hold almost surely.
\end{remark}

To prove  \ref{step2} and \ref{step1}, we first examine the support $\Omega_n$ of the log-likelihood $L_n$ in Section \ref{sec:supp}. In Section \ref{sec:profile_lik}, we study the properties of $L_n$ for a fixed $n$ and $Y_1,\ldots, Y_n$. To circumvent the non-convexity of the support $\Omega_n$, we slice $\Omega_n$ at different levels of $\xi$, and work with the maximum profile likelihood $PL_n(\xi)$ \citep{murphy2000profile}, which is defined as the maximum value of $L_n$ on the slice of $\xi$. With the help of the classic Seitz inequalities \citep{seitz1936remarque}, we show that $L_n$ is uniquely maximized on each slice of $\xi$. To locate the global maximum of $L_n$, we only need to compare the profile likelihood $PL_n(\xi)$ across different $\xi$ values. In Section \ref{sec:converge_rate}, we note that the boundary of $\Omega_n$ becomes infinitely close to $\btheta_0$ as $n$ approaches infinity. Although the convergence rate of the boundary to $\btheta_0$ is slower than the $1/\sqrt{n}$ rate of local MLE, the close proximity of the two poses fundamental challenges on deriving asymptotic properties. In Section \ref{sec:global_mle}, we overcome these challenges, and obtain the local concavity condition \ref{step2} in section \ref{sec:(2)} via establishing pointwise and uniform consistencies for a class of limit relations. We then prove the result \ref{step1} in Section \ref{sec:(1)}. Finally, we conclude that $\hat{\btheta}_n$ is indeed the unique global maximum point for $L_n$.

\section{Preliminaries}\label{sec:preliminary}
\subsection{The joint likelihood function and its support}\label{sec:supp}
First we define 
\begin{equation*}
    \beta=\beta(\btheta)=\mu-\frac{\tau}{\xi}.
\end{equation*}
This one-to-one mapping from $(\tau,\mu,\xi)$ to $(\tau,\beta,\xi)$ will be used in the subsequent analysis to simplify  notation. In addition, define
\begin{equation}\label{wi}
    w_i(\btheta):=1+\xi\left(\frac{Y_i-\mu}{\tau}\right)=\frac{\xi}{\tau}(Y_i-\beta),
\end{equation}
which helps alleviate the complexity of the log-likelihood function:
\begin{equation}\label{Log-lik}
        L_n (\btheta)=
                -n\log\tau-\frac{\xi+1}{\xi}\sumN \log w_i(\btheta)-\sumN w_i^{-1/\xi}(\btheta),\text{ when }\xi\neq 0.
\end{equation} 

The form of $\beta$ also helps to concisely delineate the support of the joint density function, i.e., the domain of the log-likelihood $L_n (\btheta)$, which can be written out as follows, given the observations $Y_1,\ldots,Y_n$: 
\begin{equation}\label{supp}
    \begin{split}
        \Omega_n&=\{\btheta\in \Theta:w_i(\btheta)>0, i=1,\ldots,n\}\\
        &=\{\btheta\in \Theta:\xi(Y_i-\beta)>0, i=1,\ldots,n\}.
    \end{split}
\end{equation}
It can be easily verified that $\Omega_n$ is not a convex set, which means Taylor expansion will not be helpful for studying $L_n (\btheta)$, which precludes the use of routine techniques such as the mean-value theorem, and makes it difficult to approximate the difference of the function on a certain intervals. Nonetheless, if we slice $\Omega_n$ at different levels of $\xi$, every cross section is convex; see Figure \ref{domain_fig} for illustration. On a cross section at a fixed $\xi$, the value of $\beta=\mu-\tau/\xi$ can be construed as the intercept of the line which has a slope of $1/\xi$ and passes through $(\tau,\mu)$. When $\xi>0$, the condition in \eqref{supp} requires this intercept $\beta<Y_{(1)}$, and when $\xi<0$, the intercept $\beta>Y_{(n)}$, where $Y_{(1)}$ and $Y_{(n)}$ are the minimum and maximum values of the observations, respectively. Therefore, for any $\btheta\in\Theta$, we can immediately tell whether $\btheta\in\Omega_n$ using only $Y_{(1)}$ and $Y_{(n)}$.
\begin{figure}
    \centering
    \includegraphics[width=0.325\textwidth]{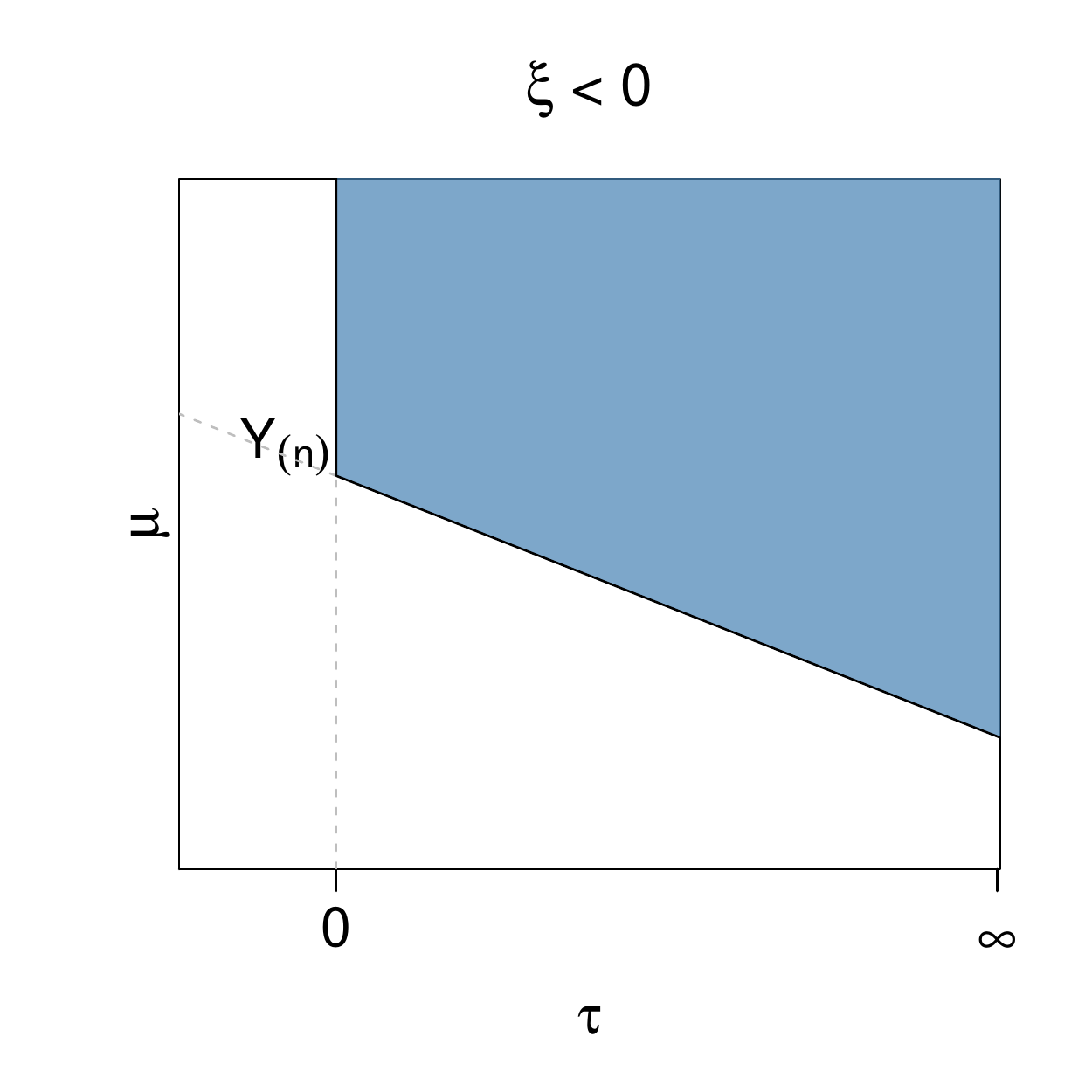}
    \includegraphics[width=0.325\textwidth]{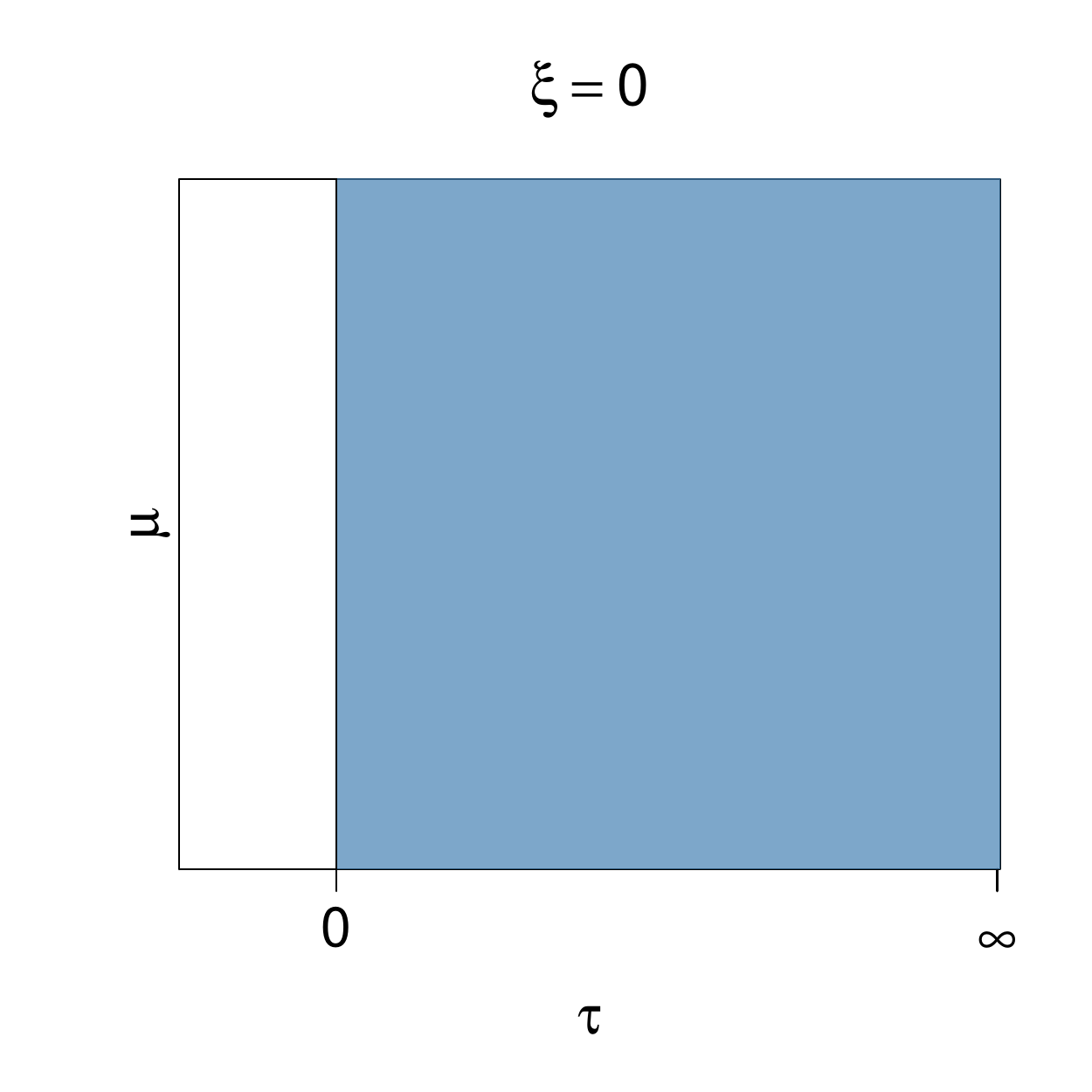}
    \includegraphics[width=0.325\textwidth]{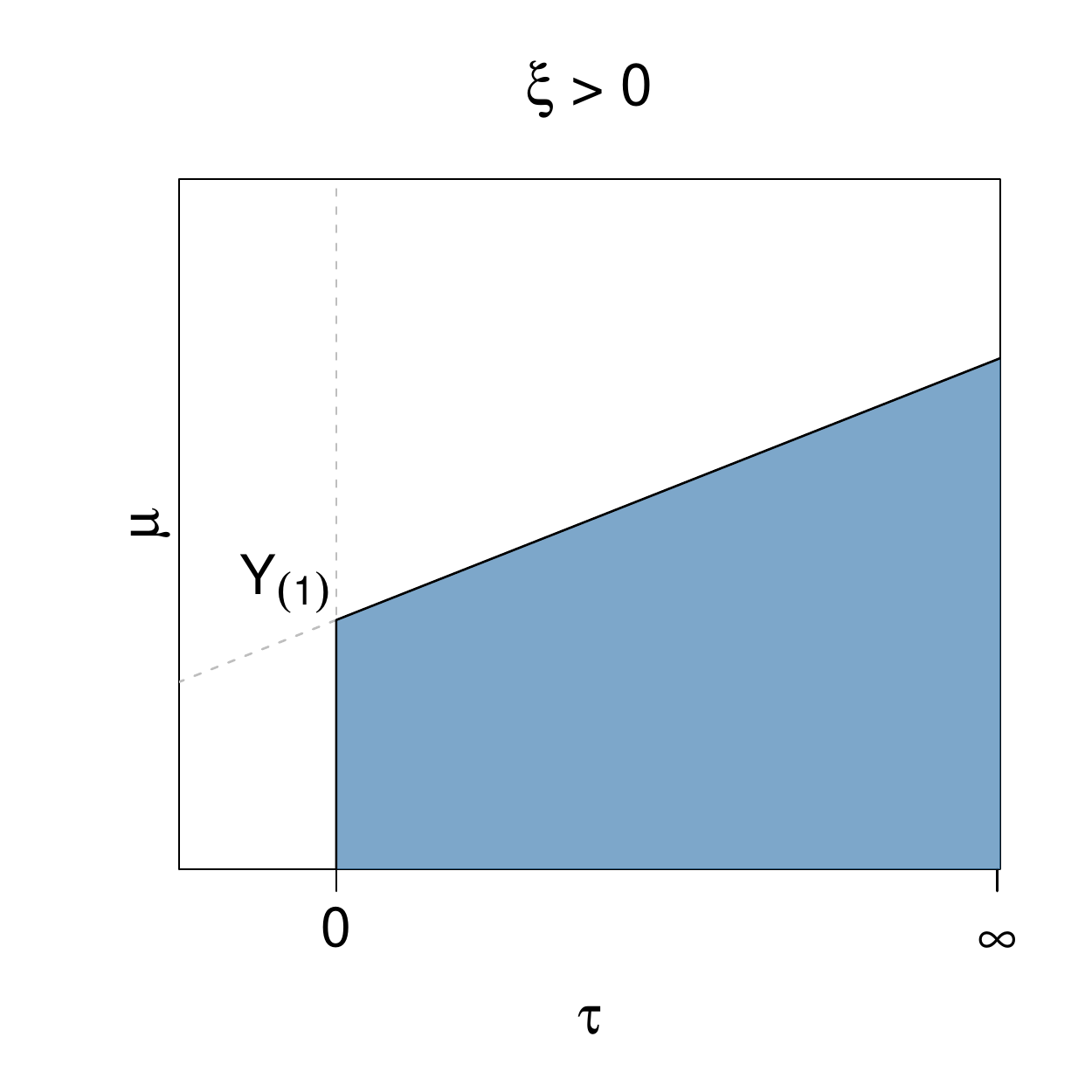}
    \caption{Slicing the support $\Omega_n$ at different levels of $\xi\in (-1/2,\infty)$. A cross section (shaded area) at any $\xi$ is convex with respect to $(\tau,\mu)$. When $\xi\neq 0$, the linear boundary of the cross section has a slope of $1/\xi$. If $\xi>0$, $\beta<Y_{(1)}$, and if $\xi<0$, $\beta>Y_{(n)}$.}
    \label{domain_fig}
\end{figure}

\subsection{Profile likelihood}\label{sec:profile_lik}
Denote the cross section of $\Omega_n$ at a certain $\xi$ as $\Omega_n(\xi)$. The convexity of $\Omega_n(\xi)$ suggests examining the log-likelihood via profiling out $(\tau,\mu)$:
\begin{equation}\label{profile_lik}
    PL_n(\xi):=\sup_{(\tau,\mu)\in \Omega_n(\xi)} L_n (\btheta).
\end{equation}
We show in the following proposition that on each cross section $\Omega_n(\xi)$, $L_n (\btheta)$ is uniquely maximized. 

\begin{proposition}\label{prop:profile_lik_uniqueML}
Suppose the sequence $Y_1,\ldots, Y_n
$ are not all equal to each other. For any $\xi\neq 0$ and $-1<\xi<n-1$, there exists a maximizer $(\tau_n(\xi),\mu_n(\xi))$ of the log-likelihood $L_n$ that is unique and global on the cross section $\Omega_n(\xi)$. The values of $\tau_n(\xi)$ and $\mu_n(\xi)$ are implicitly defined by $\xi$ and the observations through
\begin{equation}\label{max_cross_section_cond}
    \begin{cases}
    &\tau=\left\{\frac{1}{n}\sumN [\xi(Y_i-\beta)]^{-1/\xi}\right\}^{-\xi},\\
    &(\xi+1)\sumN [\xi(Y_i-\beta)]^{-1}=\frac{n\sumN [\xi(Y_i-\beta)]^{-1-1/\xi}}{\sumN [\xi(Y_i-\beta)]^{-1/\xi}}.
    \end{cases}
\end{equation}
\end{proposition}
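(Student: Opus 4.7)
My plan is to convert the two-dimensional problem of maximizing $L_n$ over the convex cross section $\Omega_n(\xi)$ into a one-dimensional one by profiling out $\tau$, and then to establish existence of a unique interior maximizer for the resulting profile likelihood in $\beta$. I would work throughout in the coordinates $(\tau,\beta)$ with $\beta=\mu-\tau/\xi$, so that $\Omega_n(\xi)$ factorizes as $(0,\infty)\times I_\xi$, where $I_\xi=(-\infty,Y_{(1)})$ if $\xi>0$ and $I_\xi=(Y_{(n)},\infty)$ if $\xi<0$, and the log-likelihood \eqref{Log-lik} rewrites cleanly in terms of $z_i:=\xi(Y_i-\beta)>0$ as
$$L_n(\tau,\beta)=\frac{n}{\xi}\log\tau-\frac{\xi+1}{\xi}\sum_{i=1}^n\log z_i-\tau^{1/\xi}\sum_{i=1}^n z_i^{-1/\xi}.$$

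For each fixed $\beta\in I_\xi$, the map $u\mapsto L_n(e^u,\beta)$ has the form $(n/\xi)u-Ce^{u/\xi}+\mathrm{const}$ with $C>0$, so it is strictly concave and possesses a unique maximizer; its first-order condition yields exactly the first line of \eqref{max_cross_section_cond}, namely $\hat\tau(\beta)=\{n^{-1}\sum_i z_i^{-1/\xi}\}^{-\xi}$. Substituting this produces the univariate profile
$$f(\beta)=-n\log\Bigl(\sum_{i=1}^n z_i^{-1/\xi}\Bigr)-\frac{\xi+1}{\xi}\sum_{i=1}^n\log z_i+n\log n-n.$$
A boundary analysis shows $f\to -\infty$ at every endpoint of $I_\xi$. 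The delicate end is $\beta\to Y_{(1)}^-$ for $\xi>0$ (respectively $\beta\to Y_{(n)}^+$ for $\xi<0$), where the smallest $z_{(1)}$ dominates both sums and gives $f(\beta)\sim \frac{n-\xi-1}{\xi}\log z_{(1)}$, which diverges to $-\infty$ \emph{precisely} because of the hypothesis $\xi<n-1$; the easier ends $|\beta|\to\infty$ yield $f\to-\infty$ by direct asymptotics, using $\xi>-1$ to control the sign of the logarithmic term near infinity. Hence $f$ attains its supremum in the interior of $I_\xi$, and any such maximizer satisfies $f'(\beta)=0$, which rearranges algebraically to the second line of \eqref{max_cross_section_cond}.

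For uniqueness, write $s_k(\beta)=\sum_i z_i^k$ so that
$$f'(\beta)=(\xi+1)\,s_{-1}(\beta)-n\,\frac{s_{-1-1/\xi}(\beta)}{s_{-1/\xi}(\beta)}.$$
The ratio $s_{-1-1/\xi}/s_{-1/\xi}$ is the mean of $z_i^{-1}$ under the weights $p_i\propto z_i^{-1/\xi}$, whereas $s_{-1}/n$ is the unweighted mean of the same quantities. The classical inequality of \citet{seitz1936remarque} compares such weighted and unweighted sums of monotonically ordered sequences and yields a strict inequality whenever the $Y_i$ are not all equal. I would use this to show that $f'$ crosses zero exactly once on $I_\xi$; combined with the boundary behavior $f\to-\infty$ at both ends, this forces the critical point of $f$ to be unique and to be the global maximizer of $L_n$ on $\Omega_n(\xi)$.

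The main obstacle is the uniqueness argument. Because the log-term $-\frac{\xi+1}{\xi}\sum\log z_i$ enters with the ``wrong'' sign, $L_n$ is not jointly concave in $(\tau,\beta)$ and the profile $f(\beta)$ is not visibly concave either, so elementary convex-analytic arguments do not apply. Uniqueness must therefore be extracted from the algebraic structure of the first-order condition via a sharp inequality such as Seitz's, which cleanly separates the nondegenerate case $Y_{(1)}<Y_{(n)}$ from the degenerate one in which all observations coincide.
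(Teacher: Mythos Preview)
Your strategy coincides with the paper's: reparametrize to $(\tau,\beta)$, profile out $\tau$ (same closed form), and reduce to a one-dimensional analysis of the resulting profile in $\beta$, invoking the Seitz inequality for uniqueness. The boundary analysis you outline is essentially the same as the paper's, only phrased in terms of $f$ rather than the auxiliary function the paper introduces.

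There is, however, a gap in your uniqueness argument as written. You observe that $s_{-1-1/\xi}/s_{-1/\xi}$ and $s_{-1}/n$ are a weighted and an unweighted mean of the $z_i^{-1}$, and that Seitz gives a strict inequality between them when the $Y_i$ are not all equal. But a single strict inequality between these two means does not by itself show that $f'$ crosses zero \emph{exactly once}; you need to know how the sign of $f'(\beta)$ evolves as $\beta$ varies. The paper closes this gap differently: it factors $f'(\beta)=-n\,s_{-1}(\beta)\,H_n(\beta)$ with
\[
H_n(\beta)=\frac{s_{-1-1/\xi}(\beta)}{s_{-1/\xi}(\beta)\,s_{-1}(\beta)}-\frac{\xi+1}{n},
\]
and then proves $H_n'(\beta)>0$ by applying the Cauchy--Schwarz inequality to one piece of the derivative and the Seitz inequality to the other (this is Lemma~\ref{lem:monotone_on_beta}). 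Monotonicity of $H_n$, together with its boundary limits $-\xi/n$ and $1-(\xi+1)/n$ (which have opposite signs exactly when $-1<\xi<n-1$), then yields a unique zero and hence a unique critical point of $f$. So the missing idea is to apply Seitz not to $f'$ itself but to the \emph{derivative} of the normalized ratio, in order to get strict monotonicity.
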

\begin{proof}
See Appendix \ref{proof:profile_lik}.
\end{proof} 
\begin{remark}\label{remark:PL_n}
As defined in \eqref{profile_lik}, $PL_n(\xi)=L_n(\tau_n(\xi),\mu_n(\xi),\xi)$. Inserting  \eqref{max_cross_section_cond} into \eqref{Log-lik},
\begin{equation}\label{eqn:PL_xi}
     PL_n(\xi)=-n\log\left\{\frac{1}{n}\sumN [\xi(Y_i-\beta_n(\xi))]^{-1/\xi}\right\}-\frac{\xi+1}{\xi}\sumN\log[\xi(Y_i-\beta_n(\xi))]-n,
\end{equation}
where $\beta_n(\xi)=\mu_n(\xi)-\tau_n(\xi)/\xi$, $-1<\xi<n-1$ and $\xi\neq 0$.
\end{remark}

\begin{remark}\label{remark:PL_n_0}
We show within the proof of Proposition \ref{prop:profile_lik_uniqueML} in Appendix \ref{proof:profile_lik} that $PL_n(\xi)=\infty, \text{ if } \xi>n-1$. For $\xi=0$, there also exists an unique maximizer $(\tau_n(0),\mu_n(0))$ on the cross section $\Omega_n(0)$, which is implicitly defined by
\begin{equation*}
    \begin{cases}
    &n\tau=\sumN \left[1-\exp\left(-\frac{Y_i-\mu}{\tau}\right)\right]Y_i,\\
    &n=\sumN \exp\left(-\frac{Y_i-\mu}{\tau}\right).
    \end{cases}
\end{equation*}

By the continuity of $L_n$ at $\xi=0$, we know that
$$\lim_{\xi\rightarrow 0}\mu_n(\xi)=\mu_n(0),\;\lim_{\xi\rightarrow 0}\tau_n(\xi)=\tau_n(0)\text{ and }\lim_{\xi\rightarrow 0}PL_n(\xi)=PL_n(0).$$
\end{remark}

\begin{remark}
Notice that for this proposition, we are not assuming that $Y_1,\ldots,Y_n$ are drawn from $P_{\btheta_0}$. Rather they can be independent copies of any unitary distribution on $\mathcal{X}$. The fact that $PL_n$ becomes senseless when $\xi>n-1$ is a result of the mathematical form of the GEV log-likelihood $L_n$.
\end{remark}

Since this result does not rely on the asymptotics of the GEV distribution, the lower bound of $\xi$ is extended to $-1$. The reason we restrict the lower bound to $-1/2$ in Theorem \ref{thm:global_mle} is that the propositions in the forthcoming sections require second-order consistencies. The existence and first-order consistency arguments in \citet{dombry2015existence} also pertain to $\xi>-1$.

To find the global maximum, we now only need to compare the `representative' maximum likelihood from the each cross section. If the profile likelihood $PL_n$ as a function of $\xi$ is strictly concave in $(-1,n-1)$, it has a unique maximum at a $\xi$ value such that $PL'_n(\xi)=0$, and then $(\tau_n(\xi),\mu_n(\xi),\xi)$ is the unique global maximizer for $L_n$. Unfortunately, $PL_n$ is not a strictly concave function of $\xi$. As demonstrated in the following proposition, the first derivative $PL'_n$ is not monotonically decreasing, and it behaves irregularly when $\xi$ approaches the bounds of $(-1,n-1)$.

\begin{proposition}\label{prop:xi_derivative}
Under the assumptions of Proposition \ref{prop:profile_lik_uniqueML},the first derivative $PL'_n$ is well-defined and continuous in $\xi\in(-1,n-1)$. When $\xi\searrow -1$, $PL'_n(\xi)\rightarrow -\infty$. When $\xi\nearrow n-1$, $PL'_n(\xi)\rightarrow \infty$. By the intermediate zero theorem, there must exist a $\xi\in (-1,n-1)$ such that $PL'_n(\xi)=0$.
\end{proposition}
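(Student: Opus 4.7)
I would apply the envelope theorem to obtain a workable formula for $PL'_n$ and then analyze its behavior at each endpoint via the optimality conditions from Proposition \ref{prop:profile_lik_uniqueML}. For each $\xi\in(-1,n-1)\setminus\{0\}$, the profile maximum is attained at an interior point of the convex open cross-section $\Omega_n(\xi)$, characterized by the score equations \eqref{max_cross_section_cond}. The implicit function theorem, applied to these equations (whose Jacobian in $(\tau,\mu)$ is non-singular since $(\tau_n(\xi),\mu_n(\xi))$ is the unique interior maximum of a smooth function), gives $\xi\mapsto(\tau_n(\xi),\mu_n(\xi))\in C^1$. The envelope theorem then yields
\[
PL'_n(\xi)=\left.\frac{\partial L_n}{\partial\xi}\right|_{(\tau_n(\xi),\mu_n(\xi),\xi)},
\]
continuous on $(-1,n-1)\setminus\{0\}$ by composition; continuity across $\xi=0$ follows from Remark \ref{remark:PL_n_0} together with the smoothness of $L_n(\tau,\mu,\xi)$ at $\xi=0$ (series expansions of $\log(1+\xi x)$ and $(1+\xi x)^{-1/\xi}$ about $\xi=0$). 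Substituting \eqref{max_cross_section_cond} back into $\partial_\xi L_n$ simplifies $PL'_n$ to a closed-form expression in sums of $\log Z_i(\xi)$ and powers $Z_i(\xi)^{-1/\xi}$, where $Z_i(\xi):=\xi(Y_i-\beta_n(\xi))>0$. This explicit form is the workhorse for the endpoint analyses.

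\textbf{Limit as $\xi\nearrow n-1$.} Rewriting the second equation of \eqref{max_cross_section_cond} as
\[
(\xi+1)\Bigl(\sum_{i=1}^n Z_i^{-1}\Bigr)\Bigl(\sum_{i=1}^n Z_i^{-1/\xi}\Bigr)=n\sum_{i=1}^n Z_i^{-1-1/\xi},
\]
a dominant-term analysis shows that for the equation to remain solvable as $\xi\to n-1$, the minimum $Z_{(1)}(\xi):=\min_i Z_i(\xi)$ must tend to $0$; otherwise the left side grows like $\xi\sum_i Z_i^{-1}$ while the right side stays bounded. Hence $\beta_n(\xi)\to Y_{(1)}^{-}$. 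Plugging this asymptotic into the formula from the previous paragraph, the term driven by $\log Z_{(1)}(\xi)\to-\infty$ carries a positive coefficient that dominates, yielding $PL'_n(\xi)\to+\infty$. Intuitively, since $PL_n$ becomes infinite on $(n-1,\infty)$ by Remark \ref{remark:PL_n}, $PL_n$ must climb unboundedly as $\xi\nearrow n-1$, and the derivative follows suit.

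\textbf{Limit as $\xi\searrow-1$ and conclusion.} Here $\xi<0$ forces $\beta_n(\xi)>Y_{(n)}$. The prefactor $(\xi+1)/\xi$ vanishes linearly while $-1/\xi\to 1^{-}$, so $Z_i^{-1/\xi}\to Z_i^{-1}$ remains bounded, and the competing terms in $\partial_\xi L_n$ balance delicately. Tracking how $\beta_n(\xi)$ approaches $Y_{(n)}$ via the analogous analysis of \eqref{max_cross_section_cond} in this regime and extracting the dominating negative-coefficient logarithmic term in $PL'_n$ then yields $PL'_n(\xi)\to-\infty$. Continuity of $PL'_n$ on $(-1,n-1)$ together with the opposing endpoint limits invokes the intermediate value theorem to produce a zero in $(-1,n-1)$.

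The main obstacle I anticipate is the $\xi\searrow-1$ endpoint: unlike at $n-1$, there is no divergence of $PL_n$ on the complementary side of the boundary to lean on, so one must carefully disentangle the vanishing prefactor $(\xi+1)/\xi$, the precise asymptotic location of $\beta_n(\xi)$, and the competing divergences inside the sums in order to identify the dominating term in $PL'_n$.
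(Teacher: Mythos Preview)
Your approach mirrors the paper's: compute $PL'_n$ via the envelope theorem (equivalently, differentiate the explicit formula \eqref{eqn:PL_xi} and use the second score condition in \eqref{max_cross_section_cond} to kill the $\beta_n'(\xi)$ term), then pin down the endpoint behavior of $\beta_n(\xi)$ and read off the dominant logarithmic contribution. Two remarks where the paper is sharper than your sketch.

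First, your dominant-term heuristic for $\beta_n(\xi)\to Y_{(1)}^-$ as $\xi\nearrow n-1$ is not quite right as written: if $Z_{(1)}$ stays bounded away from $0$, both sides of your displayed equation remain bounded (or both tend to $0$ if $\beta_n(\xi)\to-\infty$), so neither side ``grows.'' The paper instead uses a clean monotonicity lemma: the function
\[
H_n(\beta)=\frac{\sum_i[\xi(Y_i-\beta)]^{-1-1/\xi}}{\sum_i[\xi(Y_i-\beta)]^{-1/\xi}\sum_i[\xi(Y_i-\beta)]^{-1}}-\frac{\xi+1}{n}
\]
is shown via the Seitz inequality to be strictly increasing in $\beta$, with boundary limits $-\xi/n$ and $1-(\xi+1)/n$; the latter vanishes exactly at $\xi=n-1$, forcing the unique zero $\beta_n(\xi)\to Y_{(1)}$. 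The case $\xi\searrow-1$ is symmetric, with $H_n(Y_{(n)}^+)=-(\xi+1)/n\to0$ giving $\beta_n(\xi)\to Y_{(n)}$.

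Second, your anticipated obstacle at $\xi\searrow-1$ does not materialize. Once $\beta_n(\xi)\to Y_{(n)}$, write $\delta_{(n)}=\xi(Y_{(n)}-\beta_n(\xi))\to0$. Since $-1/\xi\to1$, the term $\delta_{(n)}^{-1/\xi}\log\delta_{(n)}\to0$, so the weighted average $\sum_i\delta_i^{-1/\xi}\log\delta_i\big/\sum_i\delta_i^{-1/\xi}$ tends to a finite constant. In the simplified formula \eqref{eqn:xi_derivative} there is no $(\xi+1)/\xi$ prefactor left to disentangle; the sole divergent piece is the bare $\xi^{-2}\log\delta_{(n)}\to-\infty$ inside $\xi^{-2}\sum_i\log\delta_i$, and the limit follows immediately. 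The computation is no more delicate than at $\xi\nearrow n-1$.
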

\begin{proof}
See Appendix \ref{proof:profile_lik}.
\end{proof} 
\begin{remark}
For $\xi\neq 0$,
\begin{small}
\begin{equation}\label{eqn:xi_derivative}
    PL'_n(\xi)=-\frac{n}{\xi}-\frac{n\sumN[\xi(Y_i-\beta_n(\xi))]^{-1/\xi}\log[\xi(Y_i-\beta_n(\xi))]}{\xi^2\sumN[\xi(Y_i-\beta_n(\xi))]^{-1/\xi}}+\frac{1}{\xi^2}\sumN \log[\xi(Y_i-\beta_n(\xi))].
\end{equation}
\end{small}
For $\xi=0$, the first derivative can be defined as the limit:
\begin{equation*}
    \lim_{\xi\rightarrow 0}PL'_n(\xi)=\frac{n\mu'_n(0)-\sumN [Y_i-\mu_n(0)+\tau'_n(0)]}{\tau_n(0)}+\frac{\sumN [Y_i-\mu_n(0)+\tau'_n(0)]^2-n\tau'_n(0)^2}{2\tau_n(0)^2}.
\end{equation*}
\end{remark}

It is easy to verify that if a $\xi$ value solves $PL'_n(\xi)=0$, \eqref{max_cross_section_cond} and \eqref{eqn:xi_derivative} together ensure that $(\tau_n(\xi),\mu_n(\xi),\xi)$ solves the score equations of $L_n$. Hence this result provides an alternative approach to proving the existence of the local MLE for $L_n$. However, acquiring the strong consistency of the local MLE requires the assumption that $Y_1,\ldots Y_n\iidY P_{\btheta_0}$ and the limiting behavior as $n\rightarrow\infty$. 

Figure \ref{fig:PL_n_xi} illustrates some key features of the profile likelihood function.  We simulate $Y_1,\ldots,Y_n$ from $P_{\btheta_0}$ and calculate the profile likelihood $PL_n$ at a grid of $\xi$ values ranging from $-1$ to $1$. For both positive shape parameter $\xi_0=0.2$ and negative shape parameter $\xi_0=-0.2$, $PL_n$ 
appears to be uniquely maximized by the local MLE which is close to $\xi_0$. Although it is not a concave function globally, we still observe local concavity around $\xi_0$, which suggests adoption of the two-step strategy introduced in Section \ref{sec:intro}. Roughly speaking, these two steps are established in Section \ref{sec:global_mle} via proving \ref{step2} $PL_n$ is strictly concave in a small neighborhood of $\hat{\xi}_n$, and \ref{step1} $PL_n(\xi)<PL_n(\hat{\xi}_n)$ for $\xi$ that is far from $\hat{\xi}_n$. 
\begin{figure}
    \centering
    \includegraphics[width=0.325\textwidth]{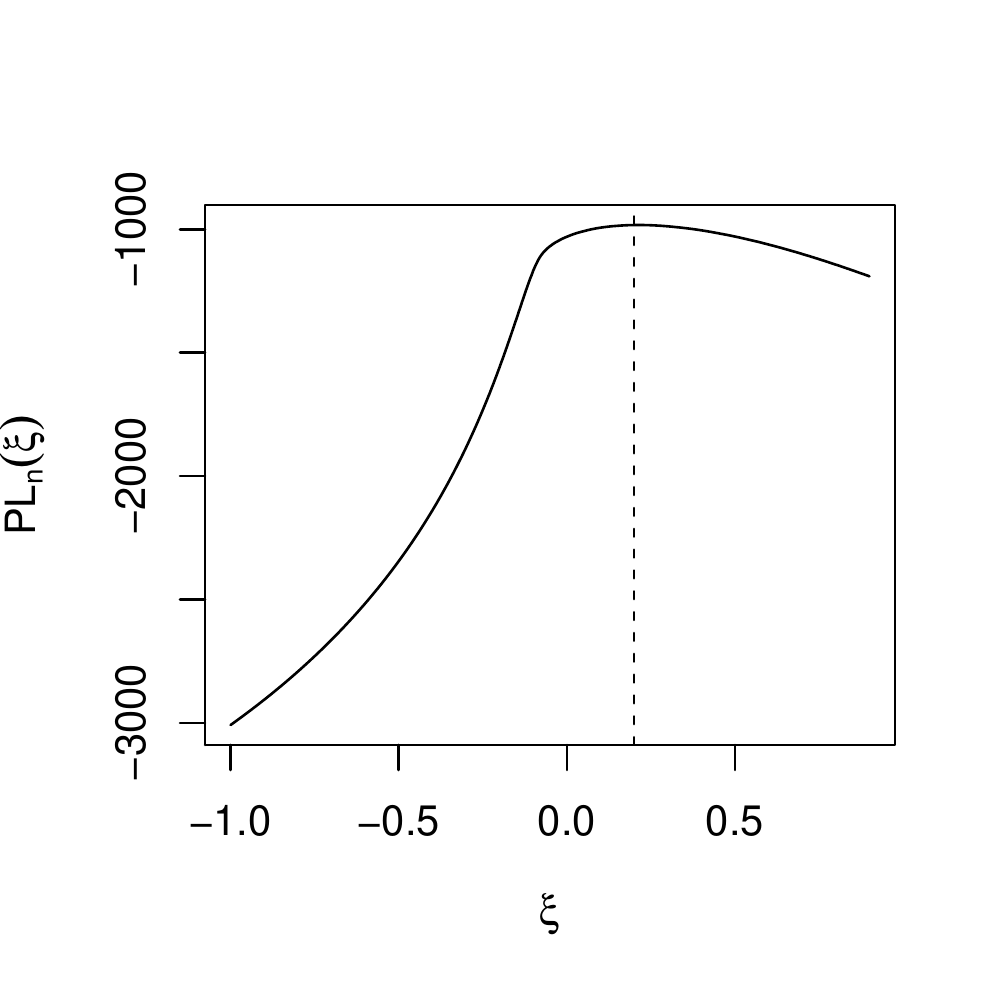}
    \includegraphics[width=0.325\textwidth]{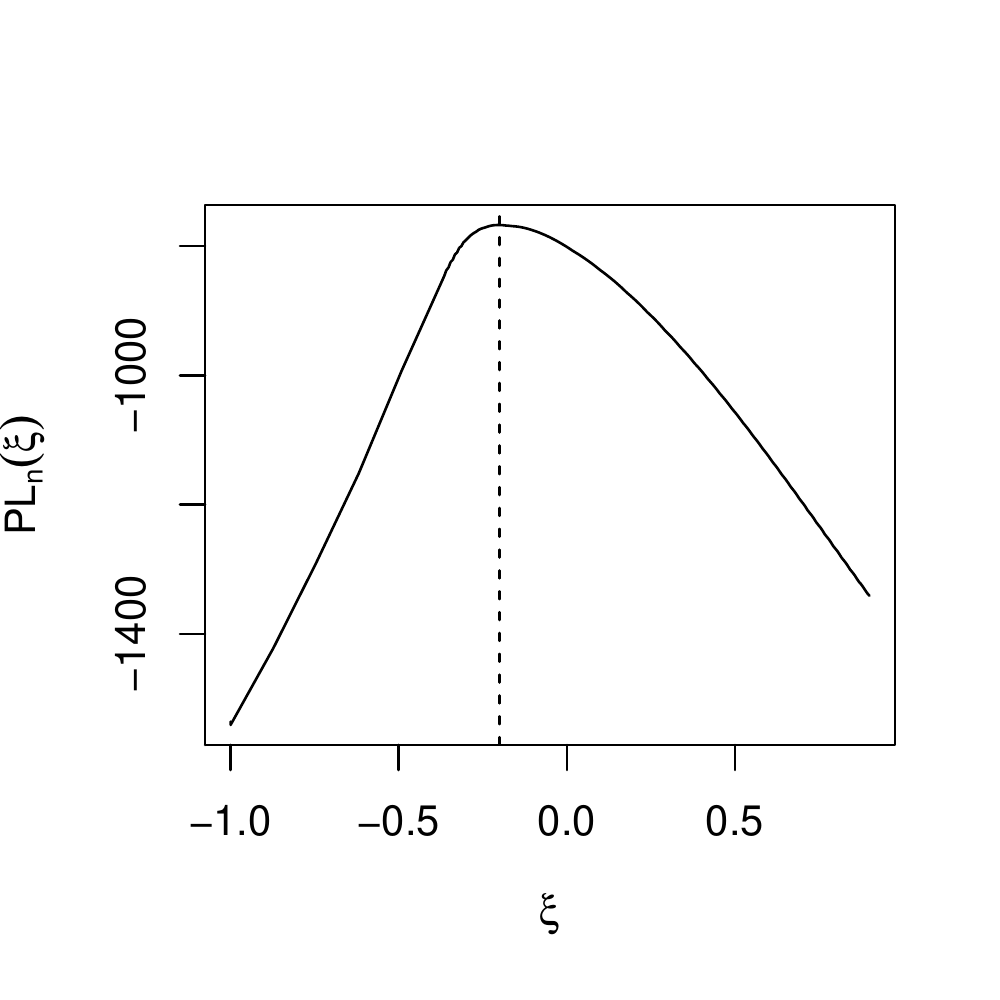}
    \caption{$PL_n(\xi)$ under $Y_i,\;i=1,\ldots,n$ that are sampled from true $\xi_0=0.2$ (left) and $\xi_0=-0.2$ (right), with dashed lines marking the local MLE $\hat{\xi}_n$. For both scenarios, $(\tau_0,\mu_0)=(0.5,20)$ and sample size $n=1,000$. We see that $PL_n$ is not a concave function.
    }
    \label{fig:PL_n_xi}
\end{figure}


\section{Convergence rate of the support boundary}\label{sec:converge_rate}

To prove \ref{step2} and \ref{step1}, we will need to study the distance between the true parameter $\btheta_0$ and the boundary of the support $\Omega_n$. It is true from the definition of $\Omega_n$ that if $Y_1,\ldots, Y_n$ are drawn from $P_{\btheta_0}$, 
\begin{equation*}
    \btheta_0\in \Omega_n \text{ for any }n\geq 1.
\end{equation*}
It is clear that $\Omega_n$ is an open set for any $n$, and hence the true parameter $\btheta_0$ is always an interior point of $\Omega_n$. This raises the question: can we always find a neighborhood of $\btheta_0$ which is contained by $\Omega_n$ that is large enough to allow us to examine the log-likelihood in the vicinity of $\btheta_0$? Unfortunately, this is not possible because $\btheta_0$ becomes infinitely close to the boundary as $n$ approaches infinity.

To quantify the distance between $\btheta_0$ and the boundary of $\Omega_n$, we first assume $\xi_0>0$ and examine the cross section $\Omega_n(\xi_0)$. This is illustrated in Figure \ref{convergence_rate_fig}, where $\btheta_0=(\tau_0,\mu_0,\xi_0)$ is shown as a red point, and $\beta_0=\mu_0-\tau_0/\xi_0$ is the intercept of the line that passes through $(\tau_0,\xi_0)$ with a slope of $1/\xi_0$. Figure \ref{convergence_rate_fig} illustrates that the difference of intercepts, $Y_{(1)}-\beta_0$, is a good measure of the distance. By analogy, if true shape parameter $\xi_0<0$, the distance can be well-measured by $\beta_0-Y_{(n)}$. 

Note that when $\xi_0>0$, the distribution of $P_{\btheta_0}$ is lower bounded by $\beta_0$, which guarantees $Y_{(1)}\asconv\beta_0$. When $\xi_0<0$, the distribution of $P_{\btheta_0}$ is upper bounded by $\beta_0$, which guarantees $Y_{(n)}\asconv\beta_0$. Thus in both cases, the distance between $\btheta_0$ and the boundary of $\Omega_n$ converges almost surely to zero. Also, \citet{bucher2017maximum} showed that 
\begin{equation*}
    \hat{\btheta}_n-\btheta_0=O_p(1/\sqrt{n}),
\end{equation*}
which means $\hat{\btheta}_n$ is also infinitely close to $\btheta_0$ as $n$ grows, and thus close to the boundary of $\Omega_n$. This is concerning for the purpose of proving global optimity of $\hat{\btheta}_n$ because it would be rather challenging to handle the log-likelihood near the boundary of the support.
\begin{figure}
    \centering
    \includegraphics[width=0.35\textwidth]{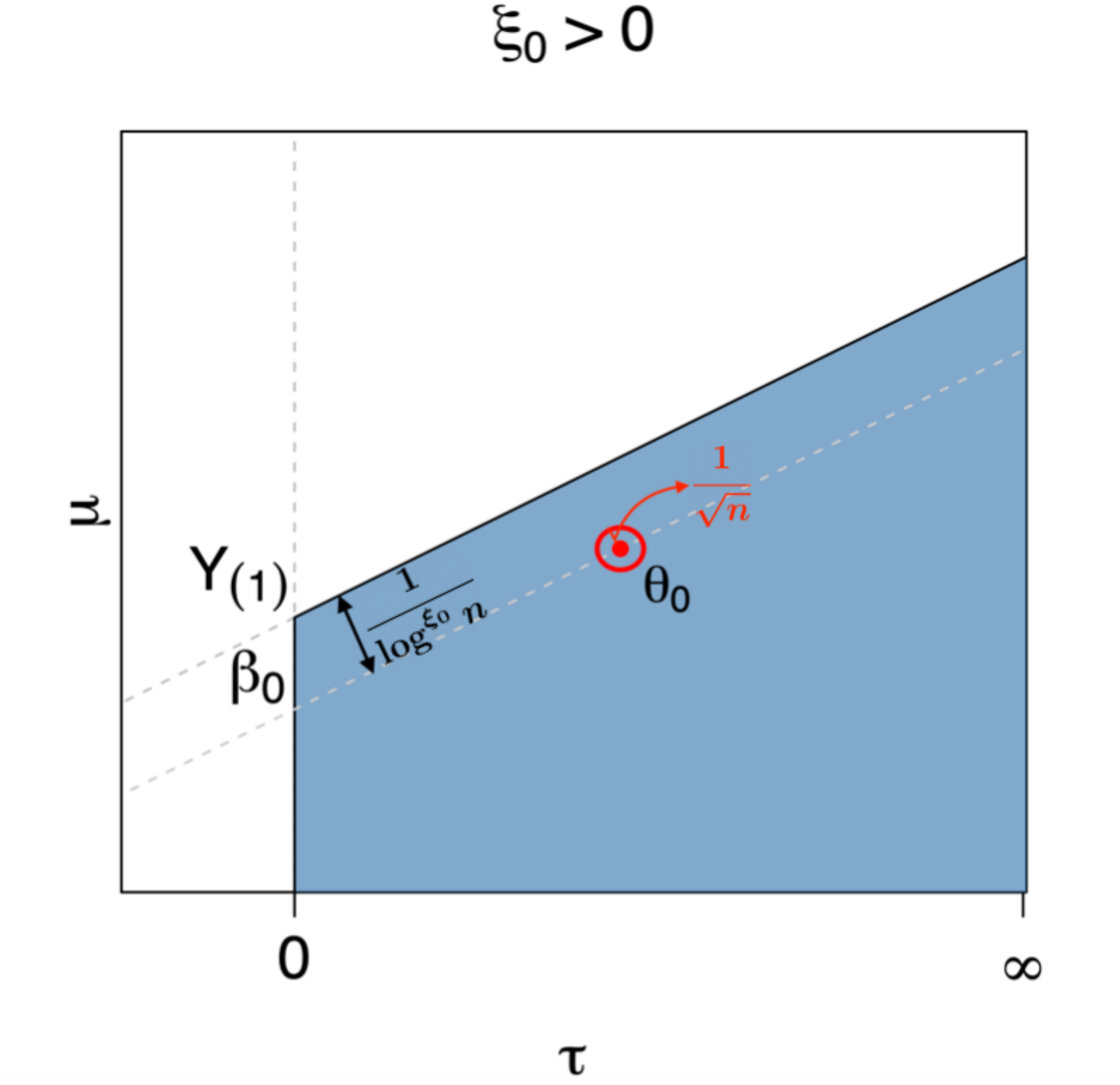}
    \caption{The cross section $\Omega_n(\xi_0)$ if true $\xi_0>0$. The two parallel dashed lines have a slope of $1/\xi_0$. The red point is  $\btheta_0=(\tau_0,\mu_0,\xi_0)$. Here we also compare the convergence rates of $\hat{\btheta}_n$ and $Y_{(1)}$, which are $1/\sqrt{n}$ and $1/\log^{\xi_0}n$. The red circle marks the neighborhood of $\btheta_0$ with radius $1/\sqrt{n}$.}
    \label{convergence_rate_fig}
\end{figure}

Therefore, it is imperative that we compare the convergence rate of the distance between $\btheta_0$ and the boundary with $1/\sqrt{n}$ in order to get a clearer picture of $L_n (\btheta)$ near the boundary.
\begin{proposition}\label{prop:rate_Min}
Suppose $Y_1,\ldots, Y_n\iidY P_{\btheta_0}$, and $\gamma>0$ is an arbitrary constant.
\begin{enumerate}[label=\textbf{(\Alph*)},ref=(\Alph*)]
\item\label{boundary_pos} If $\xi_0>0$, $Y_{(1)}\asconv\beta_0$ and $Y_{(n)}\asconv\infty$, as $n\rightarrow\infty$. Moreover,
\begin{equation*}
\begin{split}
    (\log n)^{(1+\gamma)\xi_0}(Y_{(1)}-\beta_0)\asconv\infty,&\text{  }(\log n)^{(1-\gamma)\xi_0}(Y_{(1)}-\beta_0)\asconv0,\\
    n^{-(1+\gamma)\xi_0}Y_{(n)}\asconv 0,&\text{ and  }n^{-(1-\gamma)\xi_0}Y_{(n)}\asconv\infty.
\end{split}
\end{equation*}

\item If $\xi_0<0$, $Y_{(1)}\asconv-\infty$ and $Y_{(n)}\asconv\beta_0$, as $n\rightarrow\infty$. Moreover,
\begin{equation*}
\begin{split}
    (\log n)^{(1+\gamma)\xi_0}Y_{(1)}\asconv 0,&(\log n)^{(1-\gamma)\xi_0}Y_{(1)}\asconv -\infty,\\
    n^{-(1+\gamma)\xi_0}(\beta_0-Y_{(n)})\asconv \infty,&\text{ and  }n^{-(1-\gamma)\xi_0}(\beta_0-Y_{(n)})\asconv 0.
\end{split}
\end{equation*}
\end{enumerate}
\end{proposition}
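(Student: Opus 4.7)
The plan is to reduce each almost-sure statement to a direct Borel--Cantelli calculation using the closed-form cumulative distribution function $F:=P_{\btheta_0}$. When $\xi_0>0$ the support is $(\beta_0,\infty)$ and
\[
F(\beta_0+t)=\exp\{-c\,t^{-1/\xi_0}\},\qquad 1-F(y)\sim c\,y^{-1/\xi_0}\text{ as }y\to\infty,
\]
with $c=(\tau_0/\xi_0)^{1/\xi_0}$; when $\xi_0<0$ the support is $(-\infty,\beta_0)$ and
\[
F(\beta_0-t)=\exp\{-\tilde c\,t^{1/|\xi_0|}\},\qquad F(y)\sim\exp\{-\tilde c\,|y|^{1/|\xi_0|}\}\text{ as }y\to-\infty,
\]
with $\tilde c=(|\xi_0|/\tau_0)^{1/|\xi_0|}$. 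The only probabilistic ingredients needed are the order-statistic identities $P(Y_{(1)}>y)=(1-F(y))^n$, $P(Y_{(n)}\le y)=F(y)^n$, together with $(1-F)^n\le\exp(-nF)$ and $1-(1-F)^n\le nF$.

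Six of the eight rate claims will follow from Borel--Cantelli applied to the full sequence, because the probability that the specified rate fails is eventually bounded by $1/n^2$. For $(\log n)^{(1+\gamma)\xi_0}(Y_{(1)}-\beta_0)\asconv\infty$, for example, I would set $t_n=(\log n)^{-(1+\gamma)\xi_0}$ and estimate $P(Y_{(1)}-\beta_0<t_n)\le n F(\beta_0+t_n)=n\exp\{-c(\log n)^{1+\gamma}\}$, which is summable since $c(\log n)^{1+\gamma}$ eventually exceeds $2\log n$. The three other directly-summable directions---$(\log n)^{(1-\gamma)\xi_0}(Y_{(1)}-\beta_0)\asconv 0$, $n^{-(1-\gamma)\xi_0}Y_{(n)}\asconv\infty$, and the analogous pair for $Y_{(1)}$ in part (B)---are handled the same way, using $(1-F)^n\le\exp(-nF)$ for the first and $F^n=\exp\{-cn^{\gamma}\}$ directly for the second. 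The ambient consistencies $Y_{(1)}\asconv\beta_0$, $Y_{(n)}\asconv\infty$, and their $\xi_0<0$ analogues are then immediate consequences of these rate statements.

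The delicate cases are the two ``upper-fluctuation'' statements for the maximum: $n^{-(1+\gamma)\xi_0}Y_{(n)}\asconv 0$ when $\xi_0>0$, and $n^{-(1+\gamma)\xi_0}(\beta_0-Y_{(n)})\asconv\infty$ when $\xi_0<0$. In both, the one-shot union bound gives only $P(\text{bad})\lesssim n^{-\gamma}$, which is not summable for $\gamma\le 1$. The workaround is classical geometric blocking along $n_k=2^k$: the subsequence probabilities sum like $\sum_k 2^{-k\gamma}<\infty$, so Borel--Cantelli controls $Y_{(n_k)}$ at that rate. Monotonicity of $Y_{(n)}$ in $n$ together with the monotonicity of $n\mapsto n^{(1+\gamma)\xi_0}$ then interpolates to every $n\in[n_k,n_{k+1})$, yielding the ratio bounded by a constant. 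To upgrade ``bounded'' to ``tends to zero'' (or ``diverges'' in the $\xi_0<0$ case), I would run the subsequence argument at a slightly smaller exponent $\gamma'\in(0,\gamma)$ and absorb the gap into a factor $n^{-(\gamma-\gamma')|\xi_0|}\to 0$.

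The main obstacle is precisely this blocking step: $Y_{(n)}/n^{\xi_0}$ (resp.\ $n^{|\xi_0|}(\beta_0-Y_{(n)})$) converges in distribution to a non-degenerate Fr\'echet (resp.\ reverse-Weibull) law, so no single-rate Borel--Cantelli on the full sequence can succeed at the scale of the \emph{typical} fluctuation of the maximum. The $\xi_0<0$ arguments parallel the $\xi_0>0$ ones under the distributional reflection $y\mapsto\beta_0-y$, which swaps the roles of $Y_{(1)}$ and $Y_{(n)}$ and replaces the Fr\'echet tail at $\infty$ with the Weibull tail at the upper endpoint.
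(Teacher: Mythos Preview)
Your proposal is correct, but the route differs from the paper's in one structural respect. You use Borel--Cantelli on the one-step ``bad'' events everywhere, and when the one-step probability is only $O(n^{-\gamma})$ (your cases $n^{-(1+\gamma)\xi_0}Y_{(n)}\asconv0$ for $\xi_0>0$ and its $\xi_0<0$ analogue) you pass to the geometric subsequence $n_k=2^k$ and interpolate via the monotonicity of $Y_{(n)}$. The paper never needs blocking: for precisely those two cases (and also for $(\log n)^{(1+\gamma)\xi_0}(Y_{(1)}-\beta_0)\asconv\infty$) it exploits independence of the $Y_k$ to write the ``good for all $k\ge n$'' event as an explicit infinite product over individual coordinates---e.g.\ $\prod_{k>n}P\bigl(Y_k\le\beta_0+\epsilon k^{(1+\gamma)\xi_0}\bigr)=\exp\bigl(-M_2\sum_{k>n}k^{-(1+\gamma)}\bigr)\to1$---and shows this product tends to $1$ directly. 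What each approach buys: the paper's infinite-product computation is shorter and sidesteps the summability obstruction entirely, but it leans on the specific i.i.d.\ structure and the closed-form CDF; your subsequence-plus-monotonicity argument is the standard portable device and would survive, say, weak dependence or only asymptotic tail control. For the remaining six directions the two proofs are essentially equivalent Borel--Cantelli calculations.
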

\begin{proof}
See Appendix \ref{proof:converge_rate}.
\end{proof}
When $\xi_0>0$, this theorem demonstrates that the convergence rate of $Y_{(1)}$ to $\beta_0$ is roughly $1/\log^{\xi_0}n$. The convergence rate of $\hat{\btheta}_n$ to $\btheta_0$, $1/\sqrt{n}$, is much faster than the rate of $Y_{(1)}$ to $\beta_0$. These two rates are compared schematically in Figure \ref{convergence_rate_fig}. If $\xi_0<0$, the convergence rate of $Y_{(n)}$ to $\beta_0$ is $n^{\xi_0}$, which is still slower than $1/\sqrt{n}$ because of the restriction $\xi_0>-1/2$. Thus for a ball neighborhood of $\hat{\btheta}_n$ to be contained in $\Omega_n$, its radius can be up to $1/n^\gamma$ for some $\gamma\in(0,1/2)$. This property will be of vital importance in the proof of \ref{step2} and \ref{step1}.


\section{Proof of Theorem \ref{thm:global_mle}}\label{sec:global_mle}

\subsection{Step \ref{step2} and its proof}\label{sec:(2)}
Construct the following compact set
\begin{equation*}
    \tilde{K}=\{\btheta\in\Theta: |\tau-\tau_0|\leq r,|\beta-\beta_0|\leq r,|\xi-\xi_0|\leq r\},
\end{equation*}
where $r$ is a small constant to be determined by $\btheta_0$ such that the local concavity holds in $\tilde{K}$. Slicing $\tilde{K}$ at different levels of $\xi$ will produces parallelograms; see Figure \ref{K_tilde} for illustration. In this section, we will
prove that for all large $n$, the Hessian matrix of $L_n$ is negative definite in $\tilde{K}\cap\Omega_n$, and hence $L_n$  is strictly concave.
\begin{figure}
    \centering
     \includegraphics[width=0.33\textwidth]{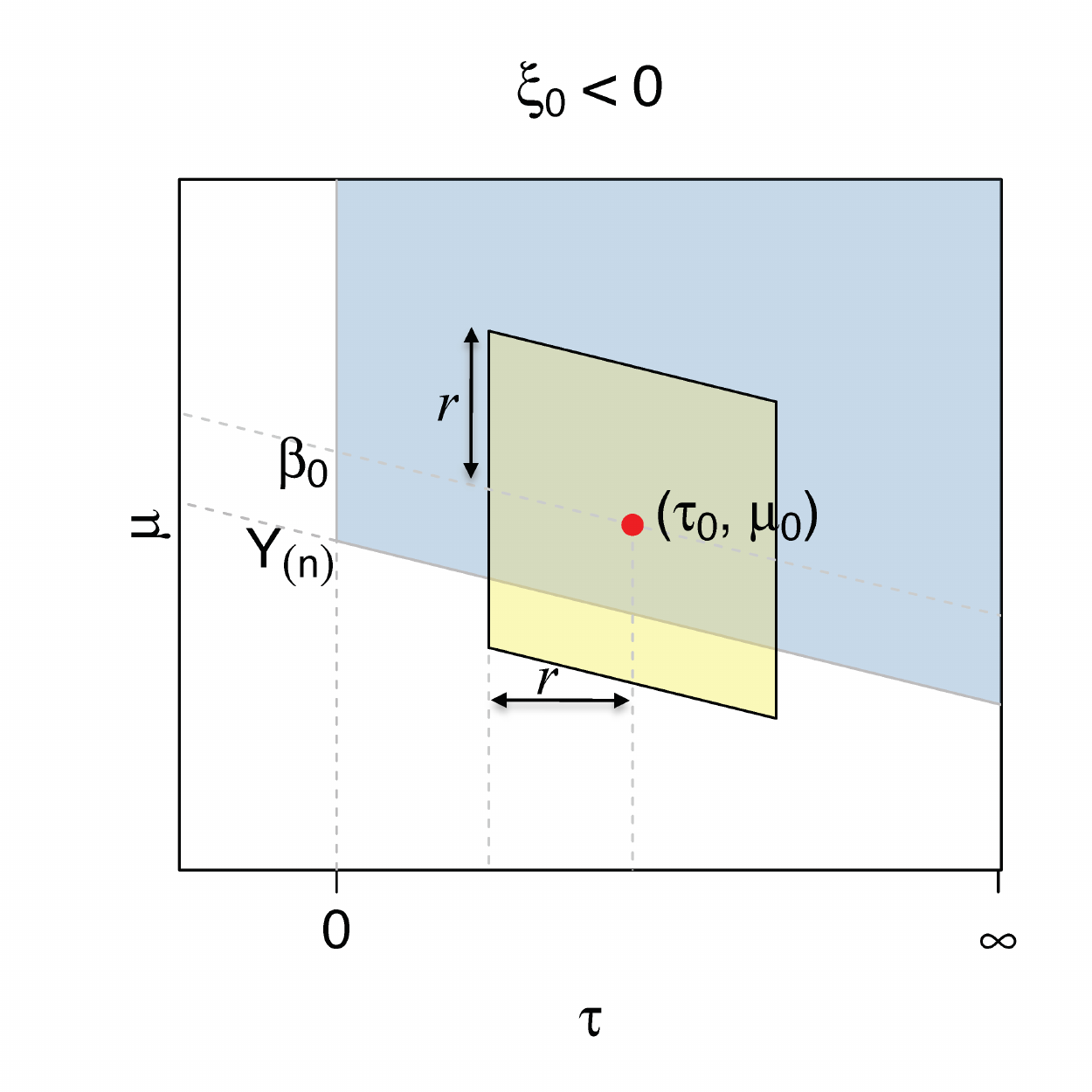}
     \includegraphics[width=0.33\textwidth]{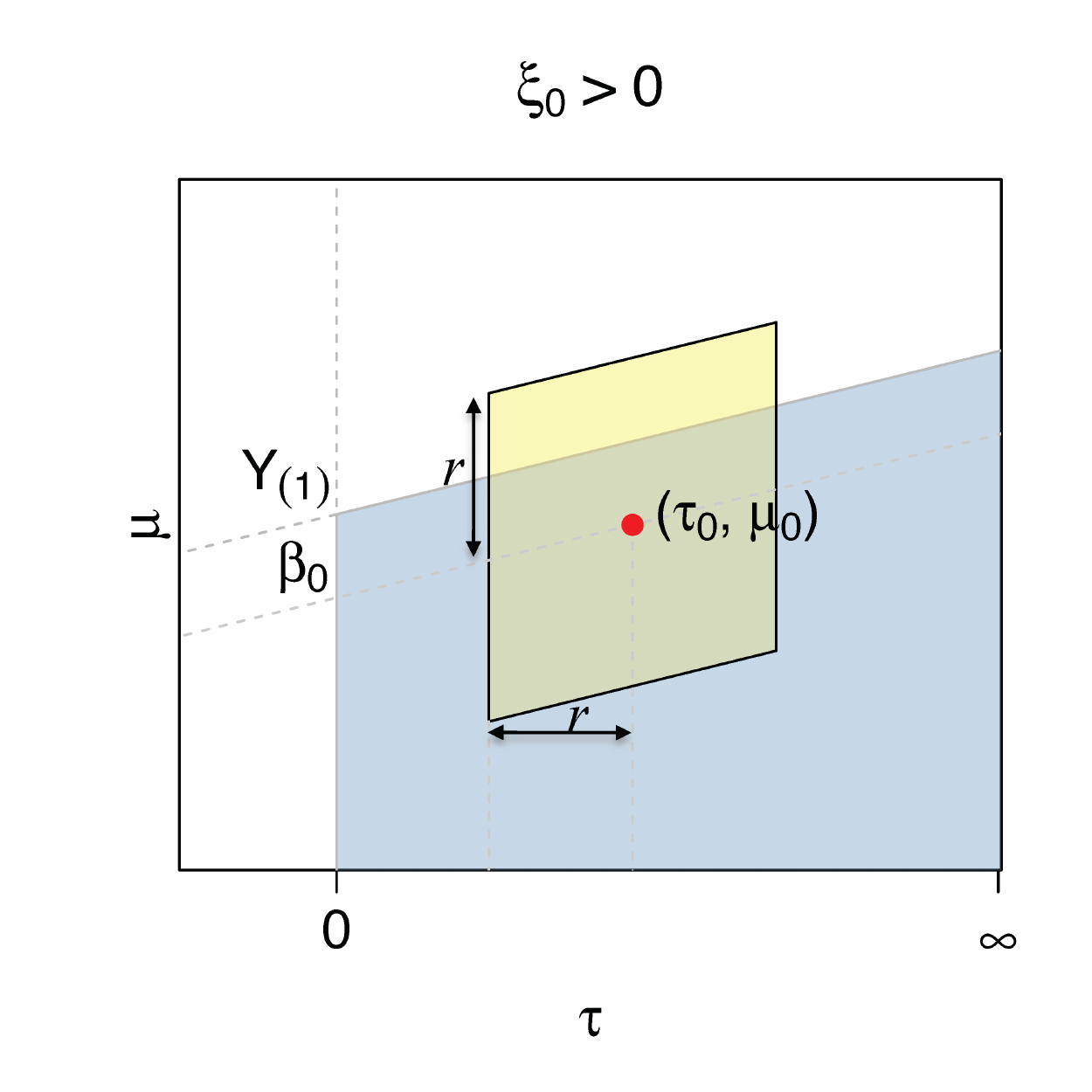}
    \caption{Illustrating $\tilde{K}$ for $\xi_0<0$ (left) and $\xi_0>0$ (right). In both cases, the set $\tilde{K}$ sliced at $\xi=\xi_0$ is shown in yellow, with $\Omega_n(\xi_0)$ shown in blue. Since $|\tau-\tau_0|<r$ and $|\beta-\beta_0|<r$, the slice is a parallelogram. Similarly, $\tilde{K}$ sliced at another $\xi$ in $(\xi_0-r,\xi_0+r)$ will also yield a parallelogram.}
    \label{K_tilde}
\end{figure}

\subsubsection{Smoothness of Hessian matrix}
First we study $L''_n(\hat{\btheta}_n)$, the Hessian at the local MLE $\hat{\btheta}_n$. The log-likelihood $L_n(\btheta)$ in \eqref{Log-lik} and elements of its Hessian matrix $L''_n(\btheta)$ can all be written as linear combinations of sums of the form
\begin{equation*}
    \sumN w_i^{-k-\frac{a}{\xi}}(\btheta)\log^b w_i(\btheta),
\end{equation*}
where $k,b=0,1,2$, $a=0,1$; see Appendix \ref{proof:loc_concavity} for the explicit expressions of the Hessian.

For $\xi_0\neq 0$ and constants $k$ and $a$ such that $k\xi_0+a+1>0$, it is straightforward to calculate
\begin{equation*}
    E_{\btheta_0}\left[w^{-k-\frac{a}{\xi_0}}(\btheta_0)\log^b w(\btheta_0)\right]=(-\xi_0)^b\Gamma^{(b)}(k\xi_0+a+1),
\end{equation*}
where $w(\btheta_0)=\xi_0(Y-\beta_0)/\tau_0$ with $Y\sim P_{\btheta_0}$, and $\Gamma^{(b)}$ is the $b$th-order derivative of the Gamma function. Since this is an iid sequence, the strong law of large numbers immediately gives strong consistency for a class of limit relations 
\begin{equation*}
    \frac{1}{n}\sumN w_i^{-k-\frac{a}{\xi_0}}(\btheta_0)\log^b w_i(\btheta_0) \asconv  (-\xi_0)^b\Gamma^{(b)}(k\xi_0+a+1).
\end{equation*}

To examine $L''_n(\hat{\btheta}_n)$, $\btheta_0$ in the preceding averages needs to be changed to $\hat{\btheta}_n$. Since $\hat{\btheta}_n\asconv \btheta_0$, the continuity of the sums with respect to $\btheta$ permits a pseudo large law of numbers for the sums in $L''_n(\hat{\btheta}_n)$.
\begin{proposition}\label{prop:pseudo_SLLN}
Suppose $Y_1, Y_2,\ldots, \iidY P_{\btheta_0}$ and $\xi_0\neq 0$, and $\hat{\btheta}_n$ is the local MLE of $L_n(\btheta)$ that is strongly consistent. Then for constants $k$ and $a$ such that $k\xi_0+a+1>0$,
\begin{equation}\label{pseudo_SLLN}
    \frac{1}{n}\sumN w_i^{-k-\frac{a}{\hat{\xi}_n}}(\hat{\btheta}_n)\log^b w_i(\hat{\btheta}_n) \asconv  (-\xi_0)^b\Gamma^{(b)}(k\xi_0+a+1),
\end{equation}
where $b$ is a non-negative integer.
\end{proposition}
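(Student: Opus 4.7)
Let $g(y;\btheta) := w(\btheta)^{-k-a/\xi}\log^b w(\btheta)$ and $\bar g_n(\btheta) := n^{-1}\sumN g(Y_i;\btheta)$, so the statement reads $\bar g_n(\hat{\btheta}_n)\asconv \mu^\star := (-\xi_0)^b\Gamma^{(b)}(k\xi_0+a+1)$. My first move is the decomposition
\begin{equation*}
    \bar g_n(\hat{\btheta}_n)-\mu^\star = \underbrace{\bigl[\bar g_n(\hat{\btheta}_n)-\bar g_n(\btheta_0)\bigr]}_{A_n} \;+\; \underbrace{\bigl[\bar g_n(\btheta_0) - \mu^\star\bigr]}_{B_n}.
\end{equation*}
The $B_n$ term is $o(1)$ almost surely by the classical strong law applied to the i.i.d.\ sequence $g(Y_i;\btheta_0)$, whose mean equals $\mu^\star$ by the Gamma-integral identity recalled just before the proposition, and whose $L^1$-integrability is secured by the strict inequality $k\xi_0+a+1>0$.

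For the substantive term $A_n$, I would lean on the boundary rate from Proposition~\ref{prop:rate_Min}. Since the distance from $\btheta_0$ to $\partial\Omega_n$ decays at the slow rate $(\log n)^{-\xi_0}$ (if $\xi_0>0$) or $n^{\xi_0}$ (if $\xi_0<0$), both of which are strictly slower than the $O_p(1/\sqrt n)$ rate of $\hat{\btheta}_n-\btheta_0$ established in \citet{bucher2017maximum} under $\xi_0>-1/2$, one can find a deterministic radius sequence $r_n \searrow 0$ that simultaneously dominates $\|\hat{\btheta}_n-\btheta_0\|$ eventually a.s.\ and is small enough that the closed ball $\overline{B}_{r_n}(\btheta_0)$ lies inside $\Omega_n$ eventually a.s. The entire line segment $[\btheta_0,\hat{\btheta}_n]$ then lies inside $\Omega_n$ for all large $n$, and the coordinatewise mean value theorem yields
\begin{equation*}
    |A_n| \;\leq\; \|\hat{\btheta}_n-\btheta_0\|\cdot \sup_{\btheta\in \overline{B}_{r_n}(\btheta_0)}\frac{1}{n}\sumN\bigl\|\nabla_\btheta g(Y_i;\btheta)\bigr\|.
\end{equation*}

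The main obstacle is showing the supremum on the right is $O(1)$ almost surely. Differentiating $g$ in $\tau,\beta,\xi$ produces linear combinations of functions of the same analytic type, $w^{-k'-a'/\xi}\log^{b'}w$, with $(k',a')$ perturbed from $(k,a)$ by a bounded amount (most notably the shift $k\mapsto k+1$ coming from $\partial/\partial\beta$) and $b'\in\{b-1,b,b+1\}$. Because $r_n\to 0$ is chosen well inside the slower support-boundary scale, each $w_i(\btheta)$ stays within constant multiples of $w_i(\btheta_0)$ uniformly in both $i$ and $\btheta\in \overline{B}_{r_n}(\btheta_0)$, so $\|\nabla_\btheta g(Y_i;\btheta)\|$ is dominated by a $\btheta$-free envelope $M(Y_i)$ built from $w_i^{-k'-a'/\xi_0}(\btheta_0)$ and powers of $|\log w_i(\btheta_0)|$. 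Provided each envelope summand has finite $P_{\btheta_0}$-mean---which requires verifying $k'\xi_0+a'+1>0$ for every perturbed exponent that arises---another application of the SLLN bounds $n^{-1}\sum M(Y_i)$, and multiplying by $\|\hat{\btheta}_n-\btheta_0\|\to 0$ closes the argument. The technically delicate step is the simultaneous uniformity $w_i(\btheta)\asymp w_i(\btheta_0)$ across all $i\in\{1,\ldots,n\}$ and all $\btheta\in \overline{B}_{r_n}(\btheta_0)$, which is exactly the control that the rate comparison in Proposition~\ref{prop:rate_Min} is designed to secure.
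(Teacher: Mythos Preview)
Your decomposition $A_n+B_n$ and the idea of controlling $A_n$ through the rate comparison of Proposition~\ref{prop:rate_Min} are exactly the right ingredients, and for $\xi_0>0$ your sketch goes through essentially as written: the $\beta$-derivative shifts $k\mapsto k+1$, and since $\xi_0>0$ the perturbed moment condition $(k+1)\xi_0+a+1>0$ is automatic. The paper argues the same way, though with explicit second-order inequalities of the form $t^x\approx 1+x\log t$ and $x^t\approx 1+t(x-1)$ in place of your mean-value bound.

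The genuine gap is the negative-shape case. For $\xi_0<0$ the shift $k\mapsto k+1$ \emph{decreases} $k\xi_0+a+1$ by $|\xi_0|$, so your proviso ``$k'\xi_0+a'+1>0$ for every perturbed exponent'' can fail (e.g.\ $k=2$, $a=0$, $\xi_0\in(-\tfrac12,-\tfrac13)$ gives $3\xi_0+1<0$). Then $E_{\btheta_0}[M(Y)]=\infty$, the SLLN does not bound $n^{-1}\sum M(Y_i)$, and the displayed product $\|\hat{\btheta}_n-\btheta_0\|\cdot\sup(\cdot)$ is $0\cdot\infty$. The repair is not to build a $\btheta$-free envelope but to keep the factor $|\hat\beta_n-\beta_0|$ attached to the extra $w_i^{-1}(\btheta_0)$ inside the sum: since
\[
|\hat\beta_n-\beta_0|\cdot w_i^{-1}(\btheta_0)=\frac{\tau_0}{|\xi_0|}\,\frac{|\hat\beta_n-\beta_0|}{|Y_i-\beta_0|}\le\frac{\tau_0}{|\xi_0|}\,\frac{|\hat\beta_n-\beta_0|}{\beta_0-Y_{(n)}}=:\epsilon_n\asconv 0
\]
uniformly in $i$ by Corollary~\ref{rate_comp}, one gets $|\hat\beta_n-\beta_0|\cdot n^{-1}\sum w_i^{-k-1-a/\xi}\le \epsilon_n\cdot n^{-1}\sum w_i^{-k-a/\xi}$, and the remaining average \emph{does} satisfy the original moment condition. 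This termwise absorption, rather than a global envelope followed by SLLN, is precisely how the paper's expansion handles the $\xi_0<0$ case; your ``technically delicate step'' is the right observation, but it has to be deployed before summation, not after.
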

\begin{proof}
The proof of this result depends on Proposition \ref{prop:rate_Min}. For details see Appendix \ref{proof:pseudo_SLLN}.
\end{proof}

Proposition \ref{prop:pseudo_SLLN} ensures that $L''_n(\hat{\btheta}_n)$ behaves like $L''_n(\btheta_0)$ for large $n$. The following result states that if we select $r$ carefully, $L''_n(\btheta)$ can be approximated by $L''_n(\hat{\btheta}_n)$ in the neighborhood $\tilde{K}\cap\Omega_n$; hence the negative-definiteness of $L''_n(\btheta)$ in this neighborhood.

\begin{proposition}\label{prop:hessian}
Let $Y_1, Y_2, \ldots\iidY P_{\btheta_0}$, and $\hat{\btheta}_n$ is the local MLE of $L_n(\btheta)$ that is strongly consistent. For a small $r>0$ that is chosen according to the value of $\btheta_0$, there almost surely exists $N$ such that, for any $n>N$ and $\btheta\in \tilde{K}\cap\Omega_n$,
\begin{equation}\label{smoothness}
    \boldsymbol{I}-\boldsymbol{A}_0(r)\leq L''_n(\btheta)\{L''_n(\hat{\btheta}_n)\}^{-1}\leq \boldsymbol{I}+\boldsymbol{A}_0(r),
\end{equation}
where $\boldsymbol{I}$ is the $3\times 3$ identity matrix and $\boldsymbol{A}_0(r)$ is a $3\times 3$ symmetric positive-semidefinite matrix whose elements only depend on $\btheta_0$ and the radius $r$, and whose largest eigenvalue tends to zero as $r\rightarrow 0$.
\end{proposition}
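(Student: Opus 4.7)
The plan is to reduce the matrix sandwich \eqref{smoothness} to entrywise control of $L''_n(\btheta)-L''_n(\hat{\btheta}_n)$, using Proposition \ref{prop:pseudo_SLLN} to pin down the anchor $L''_n(\hat{\btheta}_n)$. After differentiating \eqref{Log-lik} twice, each entry of $L''_n(\btheta)$ becomes a linear combination, with coefficients smooth in $(\tau,\xi)$, of canonical sums
$$T_{k,a,b}(\btheta)\;:=\;\sum_{i=1}^n w_i^{-k-a/\xi}(\btheta)\log^b w_i(\btheta),\qquad k\in\{0,1,2\},\;a\in\{0,1\},\;b\in\{0,1,2\}.$$
Because $\xi_0>-1/2$ ensures $k\xi_0+a+1>0$ for every admissible triple, Proposition \ref{prop:pseudo_SLLN} gives $T_{k,a,b}(\hat{\btheta}_n)/n\asconv(-\xi_0)^b\Gamma^{(b)}(k\xi_0+a+1)$, so $L''_n(\hat{\btheta}_n)/n$ converges almost surely to a fixed negative-definite matrix $\boldsymbol{M}(\btheta_0)$. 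It will therefore suffice to produce, for each admissible $(k,a,b)$, a constant $\epsilon_{k,a,b}(r)\to 0$ such that almost surely for all sufficiently large $n$,
$$\sup_{\btheta\in\tilde{K}\cap\Omega_n}\frac{1}{n}\bigl|T_{k,a,b}(\btheta)-T_{k,a,b}(\hat{\btheta}_n)\bigr|\;\leq\;\epsilon_{k,a,b}(r).$$

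To establish this bound I would use the reparameterization $w_i(\btheta)=c(\btheta)\{w_i(\hat{\btheta}_n)-\delta_n(\btheta)\}$, with $c(\btheta)=\xi\hat{\tau}_n/(\hat{\xi}_n\tau)$ and $\delta_n(\btheta)=\hat{\xi}_n(\beta-\hat{\beta}_n)/\hat{\tau}_n$. On $\tilde{K}\cap\Omega_n$ one has $c(\btheta)-1=O(r)+o(1)$ and $|\delta_n(\btheta)|=O(r)+o(1)$ almost surely (via the $O_p(n^{-1/2})$ rate of $\hat{\btheta}_n-\btheta_0$), while the constraint $w_i(\btheta)>0$ forces $\delta_n(\btheta)<\min_i w_i(\hat{\btheta}_n)$ whenever $\delta_n(\btheta)>0$. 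Expanding each summand of $T_{k,a,b}(\btheta)$ in the three small quantities $(c(\btheta)-1,\;\delta_n(\btheta),\;\xi-\hat{\xi}_n)$ rewrites $T_{k,a,b}(\btheta)-T_{k,a,b}(\hat{\btheta}_n)$ as a finite linear combination of canonical sums $T_{k',a',b'}(\hat{\btheta}_n)$ carrying coefficients of order $O(r)$, plus remainders involving factors of the form $\{1-\delta_n(\btheta)/w_i(\hat{\btheta}_n)\}^{-k-a/\xi}$.

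The main obstacle is controlling these remainder factors, which can blow up on the few indices $i$ for which $w_i(\hat{\btheta}_n)$ is comparable to $\delta_n(\btheta)$---precisely the regime where the proximity of the support boundary, quantified by Proposition \ref{prop:rate_Min}, becomes decisive. For $\xi_0>0$ (the $\xi_0<0$ case is symmetric), the boundary recedes from $\btheta_0$ at the slow rate $(\log n)^{-\xi_0}$, so only a vanishing fraction of indices are boundary-adjacent. One then splits each remainder sum into a bulk piece, where $|\delta_n(\btheta)|$ is dwarfed by $w_i(\hat{\btheta}_n)$ and a second-order Taylor bound yields an $O(r^2)$ contribution per summand, and a boundary piece, whose total contribution is majorized by a fixed multiple of $n^{-1}\{\min_i w_i(\hat{\btheta}_n)\}^{-k-a/\hat{\xi}_n}$ and absorbed into the $o(1)$ part of the error using the rate estimates of Proposition \ref{prop:rate_Min}. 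Combining the bulk and boundary pieces with the admissible sums from Proposition \ref{prop:pseudo_SLLN} delivers the desired entrywise estimate.

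Finally, writing $L''_n(\btheta)\{L''_n(\hat{\btheta}_n)\}^{-1}=\boldsymbol{I}+[L''_n(\btheta)-L''_n(\hat{\btheta}_n)]\{L''_n(\hat{\btheta}_n)\}^{-1}$ and using the almost-sure limit $\{L''_n(\hat{\btheta}_n)/n\}^{-1}\to\boldsymbol{M}(\btheta_0)^{-1}$, I would define $\boldsymbol{A}_0(r)$ as the symmetrization of an entrywise majorant built from the constants $\epsilon_{k,a,b}(r)$ and the entries of $|\boldsymbol{M}(\btheta_0)^{-1}|$. By construction $\boldsymbol{A}_0(r)$ is symmetric positive semidefinite, depends only on $\btheta_0$ and $r$, and has largest eigenvalue $O(\max_{k,a,b}\epsilon_{k,a,b}(r))\to 0$ as $r\to 0$, completing \eqref{smoothness}.
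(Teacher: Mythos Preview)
Your high-level reduction---express each Hessian entry as a linear combination of the canonical sums $T_{k,a,b}(\btheta)$, anchor $L''_n(\hat{\btheta}_n)/n$ via Proposition~\ref{prop:pseudo_SLLN}, and control $T_{k,a,b}(\btheta)-T_{k,a,b}(\hat{\btheta}_n)$ entrywise---is exactly what the paper does. The divergence is in how that last difference is controlled, and your boundary-piece argument has a genuine gap.

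You claim that the boundary contribution, coming from indices $i$ with $w_i(\hat{\btheta}_n)$ comparable to $\delta_n(\btheta)$, is majorized by a fixed multiple of $n^{-1}\{\min_i w_i(\hat{\btheta}_n)\}^{-k-a/\hat\xi_n}$. But this bound is supposed to hold \emph{uniformly over $\btheta\in\tilde K\cap\Omega_n$}, and for large $n$ the boundary $\partial\Omega_n$ cuts through $\tilde K$ (since, e.g.\ for $\xi_0>0$, $Y_{(1)}-\beta_0\to 0$ while $r$ is fixed). As $\btheta$ approaches that boundary, $\delta_n(\btheta)\nearrow w_{(1)}(\hat{\btheta}_n)$ and the remainder factor $\{1-\delta_n(\btheta)/w_{(1)}(\hat{\btheta}_n)\}^{-k-a/\xi}$ blows up without bound. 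No majorant involving only $\min_i w_i(\hat{\btheta}_n)$ can absorb this; the supremum over $\btheta$ of your boundary piece is infinite. Counting that ``only a vanishing fraction of indices are boundary-adjacent'' does not help, because a single index already contributes an unbounded term.

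The paper avoids this difficulty by a different mechanism. Working in the $(\tau,\beta,\xi)$ parametrization, $\tilde K\cap\Omega_n$ is convex, so the mean value theorem applies along the segment from $\hat{\btheta}_n$ to $\btheta$; Lipschitz control then reduces to a uniform bound on $\nabla T_{k,a,b}$ over the whole set. That uniform bound (Lemmas~\ref{shrink_neighborhood} and~\ref{shrink_neighborhood2}) is obtained not by Taylor expansion but by the power-mean inequality and the $c_r$ inequality $(a+b)^\eta\le C_\eta(a^\eta+b^\eta)$: writing $w_i(\btheta)=(\xi\hat\tau_n/\tau\hat\xi_n)\bigl(w_i(\hat{\btheta}_n)+\hat\xi_n(\hat\beta_n-\beta)/\hat\tau_n\bigr)$ and raising to the relevant (positive, in the detailed $\xi_0<0$ case) power $\eta$ gives $\sum w_i^{\eta}(\btheta)\le C_\eta\,c(\btheta)^{\eta}\bigl(\sum w_i^{\eta}(\hat{\btheta}_n)+n|\text{shift}|^{\eta}\bigr)$ directly, with no remainder and no sensitivity to how close $\btheta$ is to $\partial\Omega_n$. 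This is the step your bulk/boundary split is trying to replace, and it is where your argument would need to be repaired.
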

\begin{proof}
The rule to choose $r$ is specified in Appendix \ref{proof:loc_concavity}, which requires $r$ to satisfy \eqref{range_xi_supp} and $r<\frac{\tau_0}{3}$. The proof of this proposition can also be found in Appendix \ref{proof:loc_concavity}.
\end{proof}

As a side result, we obtain the uniform consistency of limit relations in \eqref{pseudo_SLLN} as the powers of the $w_i$ terms change in a closed interval. In Proposition \ref{prop:pseudo_SLLN}, changing the power continuously produces a continuous path of the limit. If we fix the non-negative integer $b$ and regard $\Phi_n(\alpha)=\frac{1}{n}\sumN w_i^{-\alpha}(\hat{\btheta}_n)\log^b w_i(\hat{\btheta}_n)$ as a stochastic process, $\Phi_n(\alpha)$ converges pointwise almost surely to $\Phi(\alpha)=(-\xi_0)^b\Gamma^{(b)}(\alpha\xi_0+1)$. In the following, we will show that the rate of convergence of sequences of $\Phi_n(\alpha)$ is essentially the same within a closed interval of $\alpha$. That is, there is uniform consistency, which is a stronger property than stochastic equicontinuity. The uniformity will be crucial to proving step \ref{step1}.

\begin{proposition}[Uniform consistency]
\label{prop:uniform_consistency}
Suppose $Y_1, \ldots, Y_n\iidY P_{\btheta_0}$ where $\xi_0\neq0$, and $\hat{\btheta}_n$ is the local MLE of $L_n(\btheta)$ that is strongly consistent. Fix the non-negative integer $b$, and define the closed interval $I=[m,M]$ if $\xi_0>0$, and $I=[-M,-m]$ if $\xi_0<0$, where $M>0$ and $m\in(-1/|\xi_0|,0)$ are arbitrary constants. For $\alpha\in I$, denote $\Phi_n(\alpha)=\frac{1}{n}\sum_{i=1}^nw_i^{-\alpha}(\hat{\btheta}_n)\log^b w_i(\hat{\btheta}_n)$ and $\Phi(\alpha)=(-\xi_0)^b\Gamma^{(b)}(\alpha\xi_0+1)$. Then,
\begin{equation*}
    \sup_{\alpha\in I}\left|\Phi_n(\alpha)-\Phi(\alpha)\right|\asconv 0,
\end{equation*}
\end{proposition}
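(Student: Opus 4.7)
The strategy is to upgrade the pointwise almost-sure convergence from Proposition~\ref{prop:pseudo_SLLN} to uniform convergence on the compact interval $I$ by first establishing an almost-sure uniform Lipschitz bound on the family $\{\Phi_n\}_{n\ge 1}$ and then applying a standard $\epsilon/3$ argument on a finite grid.

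I would first differentiate under the sum to obtain
\begin{equation*}
    \Phi_n'(\alpha) \;=\; -\frac{1}{n}\sumN w_i^{-\alpha}(\hat{\btheta}_n)\,\log^{b+1}w_i(\hat{\btheta}_n),
\end{equation*}
and then dominate this integrand uniformly over $\alpha\in I$. Splitting on whether $\log w_i\ge 0$ or $\log w_i<0$ yields the elementary envelope $\sup_{\alpha\in I} w^{-\alpha}\le w^{-\alpha_-}+w^{-\alpha_+}$, where $\alpha_-\le\alpha_+$ are the endpoints of $I$. Combined with the inequality $|\log w|^{b+1}\le 1+\log^{2(b+1)}w$, this gives
\begin{equation*}
    \sup_{\alpha\in I}|\Phi_n'(\alpha)| \;\le\; \frac{1}{n}\sumN\bigl(w_i^{-\alpha_-}(\hat{\btheta}_n)+w_i^{-\alpha_+}(\hat{\btheta}_n)\bigr)\bigl(1+\log^{2(b+1)}w_i(\hat{\btheta}_n)\bigr),
\end{equation*}
a finite linear combination of averages covered by Proposition~\ref{prop:pseudo_SLLN}. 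The hypotheses $m\in(-1/|\xi_0|,0)$ and $M>0$ are exactly what is needed to guarantee the Gamma-integrability condition $\alpha_\pm\xi_0+1>0$ at both endpoints, so the right-hand side converges almost surely to a finite constant. Consequently there exists $C=C(\btheta_0,m,M,b)$ such that, almost surely, $\sup_{\alpha\in I}|\Phi_n'(\alpha)|\le C$ for all $n$ large enough.

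With this equicontinuity in hand, the uniform convergence follows from a standard finite-cover argument. Fix $\epsilon>0$, and use the uniform continuity of $\Phi$ on $I$ to select a grid $\alpha_-=\alpha_0<\alpha_1<\cdots<\alpha_N=\alpha_+$ of mesh $\delta<\epsilon/(3C)$ fine enough that $|\Phi(\alpha)-\Phi(\alpha_j)|<\epsilon/3$ whenever $|\alpha-\alpha_j|<\delta$. Proposition~\ref{prop:pseudo_SLLN}, applied at each of the finitely many $\alpha_j$, delivers $\max_j|\Phi_n(\alpha_j)-\Phi(\alpha_j)|<\epsilon/3$ almost surely for all large $n$. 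For an arbitrary $\alpha\in I$, the mean value theorem bounds $|\Phi_n(\alpha)-\Phi_n(\alpha_j)|$ by $C\delta<\epsilon/3$ at the nearest $\alpha_j$, and the triangle inequality delivers $\sup_{\alpha\in I}|\Phi_n(\alpha)-\Phi(\alpha)|<\epsilon$.

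The main obstacle I anticipate is keeping the supremum of $|\Phi_n'|$ within reach of Proposition~\ref{prop:pseudo_SLLN}: differentiation in $\alpha$ produces an extra $\log w$ factor of unfavorable parity, and the supremum in $\alpha$ of $w^{-\alpha}$ must be rewritten as a sum of tractable extremes. The two elementary bounds above resolve both issues simultaneously, reducing the Lipschitz constant to a fixed finite combination of pseudo-SLLN averages at the endpoint powers; after that, the remainder of the argument is essentially automatic.
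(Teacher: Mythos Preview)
Your proposal is correct and follows the same compactness-plus-$\epsilon/3$ skeleton as the paper: uniform continuity of $\Phi$, pointwise convergence at finitely many grid points via Proposition~\ref{prop:pseudo_SLLN}, and a local oscillation bound on $\Phi_n$. The only substantive difference is how that oscillation bound is obtained. The paper expands $w_i^{\alpha_i-\alpha}$ around each grid center $\alpha_i$ via the two-sided inequality $1+x\log t\le t^x\le 1+x\log t+(t^{\eta}+t^{-\eta})(\log t)^2x^2$, producing both a first-order and a second-order term in $|\alpha-\alpha_i|$; the bounding averages are evaluated at $\alpha_i$ and $\alpha_i\pm\eta$, which forces the auxiliary constraint $\eta\le(m+1/\xi_0)/2$ so that the shifted exponents remain in the admissible range. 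Your route via the mean value theorem and the endpoint envelope $\sup_{\alpha\in I}w^{-\alpha}\le w^{-\alpha_-}+w^{-\alpha_+}$ is more elementary: it collapses the equicontinuity control to four pseudo-SLLN averages at the two endpoints of $I$, eliminates the second-order remainder, and avoids the constraint on $\eta$. Your replacement $|\log w|^{b+1}\le 1+\log^{2(b+1)}w$ is also cleaner than the paper's handling of the odd-power absolute value, which it controls by invoking Lemma~\ref{lem:wi_abs_logwi} together with the pseudo-SLLN.
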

\begin{proof}
See Appendix \ref{proof:uniform_consistency}.
\end{proof}

\subsubsection{Local concavity}
\begin{proposition}[Step \textbf{\ref{step2}}]\label{prop:loc_concavity}
Let $Y_1,Y_2,\ldots \iidY P_{\btheta_0}$, and $\hat{\btheta}_n$ is the local MLE of $L_n(\btheta)$ that is strongly consistent. Then we can find a $r>0$ small enough such that $L_n(\btheta)$ is a strictly concave function in $\tilde{K}\cap\Omega_n$. Namely,
\begin{equation*}
    \mathcal{P}\left(\exists N>0\text{ such that for all }n>N,\; L''_n(\btheta)<0 \text{ in }\tilde{K}\cap \Omega_n\right)=1.
\end{equation*}
Equivalently, $\hat{\btheta}_n$ is an unique maximum point in $\tilde{K}$.
\end{proposition}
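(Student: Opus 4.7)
The plan is to leverage Proposition \ref{prop:hessian} to propagate negative-definiteness of $L''_n(\hat{\btheta}_n)$ across all of $\tilde{K}\cap\Omega_n$. First I would establish that, for $\xi_0\neq 0$, $n^{-1}L''_n(\hat{\btheta}_n)$ converges almost surely to $-I(\btheta_0)$, where $I(\btheta_0)$ denotes the GEV Fisher information at $\btheta_0$. Every entry of $L''_n(\btheta)$ is, up to scalars, a sum of the form $\sumN w_i^{-k-a/\xi}(\btheta)\log^b w_i(\btheta)$ with small integer exponents $(k,a,b)$, so Proposition \ref{prop:pseudo_SLLN} applies entrywise once one checks $k\xi_0+a+1>0$ for each—precisely where the restriction $\xi_0>-1/2$ is used. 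Since $I(\btheta_0)$ is positive definite throughout $\xi_0>-1/2$, the random matrix $n^{-1}L''_n(\hat{\btheta}_n)$ is eventually negative definite with its largest eigenvalue bounded above by some deterministic $-\lambda_0<0$.

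Next I would fix $r>0$. It must simultaneously satisfy $r<\tau_0/3$ and \eqref{range_xi_supp} (as required by Proposition \ref{prop:hessian}), keep the interval $(\xi_0-r,\xi_0+r)$ strictly on one side of zero so that $\tilde{K}\cap\Omega_n$ is convex (by the support description \eqref{supp}, a convex combination of admissible $\btheta$'s retains $\xi$ of a single sign with $\beta$ still on the correct side of $Y_{(1)}$ or $Y_{(n)}$), and—crucially—be small enough that $\|\boldsymbol{A}_0(r)\|_{\text{op}}$ is strictly dominated by $\lambda_0$ divided by an eventual upper bound on $\|n^{-1}L''_n(\hat{\btheta}_n)\|_{\text{op}}$. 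With $r$ so chosen, the representation $L''_n(\btheta)=\bigl[\boldsymbol{I}+\bigl(L''_n(\btheta)\{L''_n(\hat{\btheta}_n)\}^{-1}-\boldsymbol{I}\bigr)\bigr]L''_n(\hat{\btheta}_n)$ combined with \eqref{smoothness} allows me to bound the perturbation in quadratic-form norm and conclude that $L''_n(\btheta)$ remains negative definite everywhere in $\tilde{K}\cap\Omega_n$.

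Negative-definiteness of the Hessian on the convex set $\tilde{K}\cap\Omega_n$ promotes $L_n$ to a strictly concave function there (restrict to any line segment and twice-differentiate). By strong consistency, $\hat{\btheta}_n$ eventually lies in the interior of $\tilde{K}$; being the local MLE, it is a critical point of $L_n$; so strict concavity identifies it as the unique global maximizer of $L_n$ on $\tilde{K}\cap\Omega_n$, which is the conclusion of the proposition.

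The main obstacle I anticipate is matching rates in the second step: $\|L''_n(\hat{\btheta}_n)\|_{\text{op}}$ grows like $n$, so the naive perturbation bound from \eqref{smoothness} is also of order $n$, and $r$ cannot be taken merely small—it must be calibrated against the spectral gap of $I(\btheta_0)$. This is where the explicit, $\btheta_0$-dependent form of $\boldsymbol{A}_0(r)$ in Proposition \ref{prop:hessian} becomes essential: it is what permits these two $O(n)$ quantities to be synchronized so that their difference stays strictly negative definite uniformly across $\tilde{K}\cap\Omega_n$.
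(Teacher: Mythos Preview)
Your proposal is correct and follows essentially the same two-step route as the paper: use Proposition~\ref{prop:pseudo_SLLN} to obtain $n^{-1}L''_n(\hat{\btheta}_n)\asconv -I(\btheta_0)$ and hence eventual negative-definiteness at $\hat{\btheta}_n$, then invoke Proposition~\ref{prop:hessian} with $r$ small to propagate this to all of $\tilde K\cap\Omega_n$. The only difference is that the paper's second step is more direct than your operator-norm perturbation: it simply takes $r$ so small that $\boldsymbol{I}-\boldsymbol{A}_0(r)$ is positive definite and reads off $L''_n(\btheta)\le L''_n(\hat{\btheta}_n)(\boldsymbol{I}-\boldsymbol{A}_0(r))<0$ from \eqref{smoothness}; because \eqref{smoothness} is already a \emph{multiplicative} comparison, the $O(n)$ growth you worry about cancels automatically, and the only calibration needed is against the fixed spectrum of $I(\btheta_0)$---so your anticipated rate-matching obstacle does not actually arise.
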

\begin{proof}
Proposition \ref{prop:pseudo_SLLN} ensures that
\begin{equation*}
    \frac{1}{n}L_n''(\hat{\btheta}_n)\asconv  -I({\btheta_0}),
\end{equation*}
where $I({\btheta_0})$ is the Fisher information of $P_{\btheta_0}$, and we know $|I({\btheta_0})|>0$ for all $\xi_0>-1/2$. Therefore, $I({\btheta_0})$ is positive definite, and there almost surely exists $N>0$ such that for all $n>N$,
\begin{equation*}
    L_n''(\hat{\btheta}_n)<0.
\end{equation*}

By Proposition \ref{prop:hessian}, $A_0(r)$ only depends on $\btheta_0$ and $r$. We now fix $r$ small enough such that the smallest eigenvalue of $\boldsymbol{I}-\boldsymbol{A}_0(r)$ is positive. By \eqref{smoothness}, 
\begin{equation*}
    L''_n(\btheta)\leq L''_n(\hat{\btheta}_n)\left(\boldsymbol{I}-\boldsymbol{A}_0(r)\right)<0.
\end{equation*}
Note that the choice of $r$ only depends on $\btheta_0$. 
\end{proof}

\subsection{Step \ref{step1} and its proof}\label{sec:(1)}
Step \ref{step1} confines the global MLE to a fixed compact set $K$ which is constructed using the values of $\btheta_0$. Since $\hat{\btheta}_n=\argmax_{\btheta\in K} L_n(\btheta)$, we can deduce the global optimality of $\hat{\btheta}_n$. When $\xi_0>0$, $K$ is defined as 
\begin{equation}\label{K_compact}
    K=\left\{\btheta:\beta_0-1\leq \mu \leq \beta_0+1+M_0,\; 0\leq \tau \leq \xi_0 M_0,\; c_0\xi_0\leq \xi \leq C_0\xi_0\right\},
\end{equation}
in which
\begin{align*}
    M_0&=\max\left\{\frac{\tau_0C_0}{\xi_0}2.5^{C_0\xi_0+1},\frac{2^{\frac{1}{\kappa_0}}\Gamma^{\frac{1}{\kappa_0}}(1-\kappa_0\xi_0)\tau_0}{\xi_0}+\frac{2^{\frac{1}{\kappa_0}+1}\xi_0^{\frac{1}{\xi_0}-2}\kappa_1^{-\frac{1}{c_0\xi_0}}}{\tau_0c_0^3}\right\},\\
    \kappa_0&=\min\left\{\frac{1}{2},\frac{1}{2\xi_0}\right\},\kappa_1=\min\{1,c_0\xi_0\},
\end{align*}
and $c_0\in (0,1)$ is a fixed constant such that when $0<x<c_0$, $$\frac{1}{2}\log\Gamma\left(\frac{1}{x}\right)+\frac{1}{2}\log x+\frac{1}{20x}-\frac{21}{10}>0,$$ 
and $C_0>1$ is a fixed constant such that when $x>C_0$, $$\left(\frac{1}{x}-1\right)\frac{\log\tau_0}{\xi_0}+\log\Gamma\left(\frac{1}{x}\right)>0.$$
The construction of $K$ when $\xi_0<0$ is done similarly in the supplementary material. Note $K$ is only dependent on the value of $\btheta_0$, and for small $r$ defined in Section \ref{sec:(2)}, $\tilde{K}\subset K$.

\begin{proposition}[Step \textbf{\ref{step1}}]\label{prop:step1}
Let $Y_1,Y_2,\ldots \iidY P_{\btheta_0}$, and $(\mu_n(\xi),\tau_n(\xi))$ be the maximizer of $L_n$ on the cross section $\Omega_n(\xi)$. Then the global maximum must be in the compact set $K$ as defined in \eqref{K_compact} for large $n$ - that is,
$$\mathcal{P}\left(\exists N>0\text{ such that for all } n>N,\;\argmax_{\btheta\in \Theta_n}L_n(\btheta)\in K\right)=1.$$
\end{proposition}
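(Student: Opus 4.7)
The natural strategy is the profile likelihood reduction of Section~\ref{sec:profile_lik}: since the global maximizer of $L_n$ on $\Theta_n$ is $(\tau_n(\xi^\ast),\mu_n(\xi^\ast),\xi^\ast)$ with $\xi^\ast=\argmax_{\xi\in(-1,n-1)} PL_n(\xi)$, it is enough to show that almost surely for all large $n$, (a) $\xi^\ast\in[c_0\xi_0,C_0\xi_0]$, and (b) whenever $\xi\in[c_0\xi_0,C_0\xi_0]$, the within-slice maximizer $(\tau_n(\xi),\mu_n(\xi))$ lies in the $(\tau,\mu)$-box defining $K$. The benchmark against which every candidate $\xi$ is compared is $n^{-1}PL_n(\hat{\xi}_n)=n^{-1}L_n(\hat{\btheta}_n)\asconv h_0:=E_{\btheta_0}[\log p_{\btheta_0}(Y)]$, a finite constant delivered by Proposition~\ref{prop:pseudo_SLLN}.

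For (b) I would apply the uniform consistency of Proposition~\ref{prop:uniform_consistency} to the sums appearing in \eqref{max_cross_section_cond}: on the compact interval $[c_0\xi_0,C_0\xi_0]$, the powers of $w_i$ indexing those sums stay in a fixed closed range, so $(\tau_n(\xi),\mu_n(\xi))$ converges uniformly to the population solution $(\tau^\ast(\xi),\mu^\ast(\xi))$, which is continuous in $\xi$ and therefore bounded on the interval. The constants $M_0$ and $1$ in the definition of $K$ are chosen to strictly majorize these limiting bounds, so (b) holds for all sufficiently large $n$.

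For (a) I split the complement into an upper tail $\xi\in(C_0\xi_0,n-1)$ and a lower tail $\xi\in(-1,c_0\xi_0)$. Inserting $\tau_n(\xi)=(S_\xi/n)^{-\xi}$, $S_\xi=\sum_{i=1}^n[\xi(Y_i-\beta_n(\xi))]^{-1/\xi}$, into \eqref{eqn:PL_xi} and simplifying via the first-order condition leaves $n^{-1}PL_n(\xi)$ as a function of $\xi$ and a few averages of the $w_i$. Propositions~\ref{prop:rate_Min} and~\ref{prop:uniform_consistency} then give that, up to $o(1)$ terms, $h_0-n^{-1}PL_n(\xi)$ is bounded below by $(1/\xi-1)(\log\tau_0)/\xi_0+\log\Gamma(1/\xi)$ on the upper tail, and by $\tfrac12\log\Gamma(1/\xi)+\tfrac12\log\xi+1/(20\xi)-21/10$ on the positive part of the lower tail. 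The defining inequalities of $C_0$ and $c_0$ were crafted precisely so that these dominating expressions are strictly positive in their respective regions, yielding $PL_n(\xi)<PL_n(\hat{\xi}_n)$. The remaining sub-range $\xi\in(-1,0]$ (only needed when $\xi_0>0$) is handled by a direct comparison argument using that $\beta_n(\xi)>Y_{(n)}$ on that side, which forces $\beta_n(\xi)$ to sit on the ``wrong'' side of $\beta_0$.

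\textbf{Main obstacle.} The delicate regime is $\xi$ approaching $n-1$: Proposition~\ref{prop:xi_derivative} shows $PL_n'(\xi)\to+\infty$ there, so $PL_n$ rises sharply and pointwise asymptotics are insufficient to rule out that it overtakes $PL_n(\hat{\xi}_n)$. The remedy is a non-asymptotic upper bound on $PL_n(\xi)$ valid \emph{uniformly} over $(C_0\xi_0,n-1)$. The key ingredients are the cross-section constraint $\beta_n(\xi)<Y_{(1)}$ combined with the rate $Y_{(1)}-\beta_0\asymp 1/\log^{\xi_0}\!n$ from Proposition~\ref{prop:rate_Min}, together with Stirling-type estimates for the $\log\Gamma(1/\xi)$ factor; these allow the log-sums $\sum_i\log[\xi(Y_i-\beta_n(\xi))]$ to be controlled uniformly in $\xi$, and the margin built into the choice of $C_0$ then closes the inequality throughout the upper tail.
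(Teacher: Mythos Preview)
Your high-level decomposition into (a) confining $\xi^\ast$ and (b) confining $(\tau_n(\xi),\mu_n(\xi))$ matches the paper, but both parts lean on a structural lemma you never invoke: the strict monotonicity $\beta_n'(\xi)>0$ (Lemma~\ref{lem:beta_xi_mono}). Without it there is a genuine gap.

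For (b), the route through Proposition~\ref{prop:uniform_consistency} is circular as written. That proposition controls averages of $w_i(\hat{\btheta}_n)$, i.e.\ sums evaluated at the \emph{known} $\hat{\beta}_n$; the sums in \eqref{max_cross_section_cond} sit at the \emph{unknown} $\beta_n(\xi)$, which is exactly the object you wish to locate. The paper breaks the circle via monotonicity: for $\xi\in[\hat{\xi}_n,C_0\xi_0]$ one has $\hat{\beta}_n\le\beta_n(\xi)<Y_{(1)}$, so $\sum_i(Y_i-\beta_n(\xi))^{-1/\xi}\ge\sum_i(Y_i-\hat{\beta}_n)^{-1/\xi}$, and only \emph{then} does Proposition~\ref{prop:uniform_consistency} apply to the right-hand side to bound $\tau_n(\xi)$ as in \eqref{tau_n_upper}. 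The sub-range $[c_0\xi_0,\hat{\xi}_n]$ needs in addition an explicit upper bound on $\hat{\beta}_n-\beta_n(\xi)$, again fed by the monotonicity.

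For (a), you correctly flag that pointwise asymptotics cannot control $PL_n$ uniformly over the growing window $(C_0\xi_0,n-1)$, but the remedy you sketch (rate of $Y_{(1)}-\beta_0$ plus Stirling on $\log\Gamma$) does not obviously close the inequality there; in particular it is unclear how you would dominate $\sum_i\log[\xi(Y_i-\beta_n(\xi))]$ without first pinning down $\beta_n(\xi)$. The paper sidesteps this entirely by bounding the \emph{derivative}: using $\beta_n(\xi)>\hat{\beta}_n$ and a Seitz/Chebyshev argument, it replaces $\beta_n(\xi)$ by $\hat{\beta}_n$ inside $PL_n'(\xi)$ and then integrates exactly via Newton--Leibniz to obtain \eqref{PL_n_approx1}. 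The resulting bound is non-asymptotic and depends on $\xi$ only through $\frac{1}{n}\sum_i w_i^{-1/\xi}(\hat{\btheta}_n)$ with $1/\xi$ confined to a \emph{fixed} compact interval, so one application of Proposition~\ref{prop:uniform_consistency} suffices and the divergence $PL_n'(\xi)\to+\infty$ near $n-1$ is harmless. The lower tail is handled by the companion decomposition $PL_n(\xi)-PL_n(\hat{\xi}_n)=F_n(\hat{\xi}_n)-F_n(\xi)+G_n(\xi)$ of Lemmas~\ref{lem:monotone_on_xi}--\ref{lem:A_n2}, which produces exactly the $c_0$-defining expression you quote.
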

\begin{proof}
Applying Proposition \ref{prop:profile_lik_uniqueML} and \ref{prop:xi_derivative} from Section \ref{sec:preliminary}, we will show in Appendix \ref{proof:step1} that 
\begin{equation}\label{A_cond}
    PL_n(\xi)<PL_n(\hat{\xi}_n)\text{ if }\xi\not\in [c_0\xi_0,C_0\xi_0],
\end{equation}
and 
\begin{equation}\label{B_cond}
    (\mu_n(\xi),\tau_n(\xi),\xi)\in K\text{ if }\xi\in [c_0\xi_0,C_0\xi_0],
\end{equation}
hold for all large $n$.

Since $\hat{\xi}_n$ converges quickly to $\xi_0$, we have $\hat{\xi}_n\in[c_0\xi_0,C_0\xi_0]$ for sufficiently large $n$. Denote $K_1=\{\btheta\in\Theta: c_0\xi_0\leq\xi\leq C_0\xi_0\}$. Clearly, $K\subsetneq K_1$ and \eqref{A_cond} implies
\begin{equation*}
    \argmax_{\btheta\in \Theta_n}L_n(\btheta)\in K_1.
\end{equation*}
When $\xi\in [c_0\xi_0,C_0\xi_0]$, \eqref{B_cond} encloses the unique maximizer $(\mu_n(\xi),\tau_n(\xi))$ on the cross section $\Omega_n(\xi)$ in $K$. Equivalently,
\begin{equation*}
    \argmax_{\btheta\in K_1}L_n(\btheta)\in K.
\end{equation*}
Combining \eqref{A_cond} and \eqref{B_cond},
\begin{equation*}
    \argmax_{\btheta\in \Theta_n}L_n(\btheta)\in K.
\end{equation*}
\end{proof}

\subsection{Completing the Proof of Theorem \ref{thm:global_mle}}
Proposition 2 in \citet{dombry2015existence} ascertained that for all large $n$, the argmax point on the set $K$ defined in Proposition \ref{prop:step1} is confined in any smaller neighborhood $\tilde{K}$. Although his result was developed within the framework of triangular arrays of block maxima, the proof can be adapted to work on iid GEV samples.

\begin{lemma}[Consistency]\label{lem:dombry}
Let $K\subset \Theta$ be a compact set that contains $\btheta_0$ as an interior point, and $Y_1, Y_2, \ldots$ be a sequence of independent and identically distributed random variables with common distribution $P_{\btheta_0}$. Then a sequence of estimators $\hat{\btheta}_n$ can be found to maximize the log-likelihood $L_n$ over $K$, and for any smaller neighborhood $\tilde{K}$ of $\btheta_0$ such that $\tilde{K}\subset K$,
$$\mathcal{P}\left(\exists N>0\text{ such that for all }n>N, \;\hat{\btheta}_n\in \tilde{K}\right)=1,$$
and hence $\hat{\btheta}_n\asconv \btheta_0$ as $n\rightarrow \infty$.
\end{lemma}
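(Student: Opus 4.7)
The plan is to run a Wald-style consistency argument on the compact set $K$, modified to accommodate the irregular GEV support. First I would verify existence of a maximizer: on $K\cap\Omega_n$ the log-likelihood $L_n$ is continuous, and as $\btheta$ approaches $\partial\Omega_n$ from the interior some $w_i(\btheta)\to 0^+$, which forces $L_n(\btheta)\to-\infty$ (the term $w_i^{-1/\xi}$ diverges when $\xi>0$, while the coefficient $-(\xi+1)/\xi$ on $\log w_i$ is positive when $-1<\xi<0$, so that logarithmic term diverges). Thus $\log p_\btheta(y)$ extends to an upper-semicontinuous function of $\btheta$ on all of $\Theta$ (with the value $-\infty$ assigned outside $\Omega_n$), and since $\btheta_0\in K\cap\Omega_n$ almost surely, the supremum of $L_n$ over the compact set $K$ is attained at some $\hat{\btheta}_n$ lying in the interior of $\Omega_n$.

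The core step is the classical identifiability/local-majorization inequality. Set $M(\btheta)=E_{\btheta_0}[\log p_\btheta(Y)]$, with the usual convention that $M(\btheta)=-\infty$ whenever the negative part is not integrable. Jensen's inequality applied to the Kullback--Leibler divergence yields $M(\btheta)\leq M(\btheta_0)$ with equality only at $\btheta=\btheta_0$. For each $\btheta^*\in K\setminus\tilde{K}$ I would then construct a small open neighborhood $U(\btheta^*)$ for which $E_{\btheta_0}\!\left[\sup_{\btheta\in U(\btheta^*)}\log p_\btheta(Y)\right]<M(\btheta_0)$: by the upper semicontinuity just noted, $\sup_{\btheta\in U}\log p_\btheta(Y)\downarrow \log p_{\btheta^*}(Y)$ as $U\downarrow\{\btheta^*\}$, so monotone convergence drives the expectation down to $M(\btheta^*)<M(\btheta_0)$, provided the supremum is integrable on at least one such $U$. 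Compactness of $K\setminus\tilde{K}$ then supplies a finite subcover $U_1,\ldots,U_N$.

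Applying the strong law of large numbers pointwise gives, for each $k$, $n^{-1}\sumN \sup_{\btheta\in U_k}\log p_\btheta(Y_i)\asconv E_{\btheta_0}[\sup_{U_k}\log p_\btheta(Y)]<M(\btheta_0)$, while $n^{-1}L_n(\btheta_0)\asconv M(\btheta_0)$. Consequently $\sup_{\btheta\in U_k}L_n(\btheta)<L_n(\btheta_0)\leq L_n(\hat{\btheta}_n)$ almost surely for all sufficiently large $n$, which rules out $\hat{\btheta}_n\in U_k$; taking the union over $k$ places $\hat{\btheta}_n\in\tilde{K}$ for all large $n$, and almost-sure convergence to $\btheta_0$ follows by letting $\tilde{K}$ shrink. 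The main technical obstacle is securing the positive-part integrability $E_{\btheta_0}\!\left[\sup_{\btheta\in U}(\log p_\btheta(Y))^+\right]<\infty$ required for the monotone-convergence step: $\log p_\btheta$ can spike when $\btheta^*$ sits close to (but not on) $\partial\Omega_n$, and a careful polynomial domination based on $\xi_0>-1$ (so that the relevant $w_i^{-1/\xi}$ terms possess enough moments under $P_{\btheta_0}$) is needed. This is precisely the calculation executed in \citet{dombry2015existence} for triangular arrays, and it transfers to the iid GEV setting with only cosmetic changes to the constants.
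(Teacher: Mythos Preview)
Your proposal is correct and is essentially the same argument the paper invokes: the paper's proof simply cites Proposition~2 of \citet{dombry2015existence} (noting that the iid GEV case is the degenerate block-size $m(n)\equiv1$ instance, with $a_m=\tau_0$, $b_m=\mu_0$), whereas you have unpacked that citation into its constituent Wald-type steps---upper-semicontinuous extension of $\log p_\btheta$, identifiability via Kullback--Leibler, local majorization, finite subcover of $K\setminus\tilde{K}$, and the SLLN. The only point the paper adds beyond the bare citation is the remark that Dombry's original proof treats $\tilde{K}$ as a ball, but the argument goes through for an arbitrary neighborhood because the closure of $K\setminus\tilde{K}$ remains compact; you implicitly use this same compactness, so nothing is missing.
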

\begin{proof}
\citet{bucher2017maximum} noted that Proposition 2 in \citet{dombry2015existence} is applicable for unitary GEV distributions. Noticing that a GEV distribution is in its own domain of attraction, the block size sequence $m(n)$ is set to be $1$ with $a_m=\tau_0$ and $b_m=\mu_0$.

Following the lines of the proof in \citet{dombry2015existence}, $\tilde{K}$ is limited to be a ball neighborhood of $\btheta_0$ with an arbitrarily small radius. It is straightforward to generalize the proof to any small neighborhood of $\btheta_0$ such that $\tilde{K}\subset K$. Because the closure of the set $\Delta=K\setminus\tilde{K}$ is still compact, any open cover of $\Delta$ has a finite subcover, whence the remaining proof can be passed though without modification.
\end{proof}

Combining Proposition \ref{prop:step1} and Lemma \ref{lem:dombry}, we obtain
\begin{equation*}
    \argmax_{\btheta\in \Theta} L_n(\btheta)\in \tilde{K}\cap \Omega_n,
\end{equation*}
and by the local strict concavity in $\tilde{K}\cap \Omega_n$ ensured by Proposition \ref{prop:loc_concavity},
\begin{equation*}
    \hat{\btheta}_n=\argmax_{\tilde{K}\cap \Omega_n} L_n(\btheta),
\end{equation*}
whence we conclude that $\hat{\btheta}_n$ attains the unique and global maximum of $L_n$.

\section{Discussion}

In this paper, we proved the uniqueness and global optimity of the MLE for the generalized extreme value distribution. This improves on previous results where the existence and asymptotic normality of the local MLE in \citet{dombry2015existence} and \citet{bucher2017maximum} were established on a predetermined compact set. Our proof uses a two-step strategy. We first prove that the log-likelihood is strictly concave in a fixed small compact neighborhood when the sample size is large, and then enclose the global maximizer in a larger neighborhood. Finally, the two main results are integrated using Proposition 2 in \citet{dombry2015existence} to ensure that the local MLE truly maximizes the log-likelihood globally.

Intermediate results necessary for the the proofs of local strict concavity and boundedness of the global MLE unveiled additional strong and interesting characteristics of the GEV likelihood function that may be of independent interest. We found that the profile likelihood attains a unique maximum at each slice of the support, identified the convergence rate of the support boundary to the local MLE, and demonstrated the uniform consistency of a class of limit relations that are the building blocks of the Hessian matrix. These results enhance our understanding of the GEV distribution, and  in future work will become instrumental in establishing asymptotic properties of the GEV posterior distribution that are key in formulating rule-based reference analysis.

In applications, however, observations are never generated exactly from a GEV distribution; rather, they come from a distribution which we typically assume to be in the domain of attraction of a GEV. Dividing the observations into non-overlapping blocks, we make the approximating assumption that the maxima extracted from each block are GEV distributed. Thus, the asymptotic setup of \citet{dombry2015existence} should be viewed as the more realistic, and our work offers theoretical foundations for maximum likelihood estimation using the GEV when the block size is large.

Finally, the number of block maxima in any observational record is limited. For future research, it is important to examine the minimum sample size required for the observations to manifest large-sample behavior, as had been done for previous asymptotic results in extreme value statistics. Relatedly, small-sample estimators for the GEV tend to be unstable, so taking advantage of the profile likelihood might provide an effective---and to our knowledge unexplored---approach to estimating the shape parameter. 
That is, one could first calculate the maximum likelihood on the cross sections of the support at different levels of $\xi$, and then find the $\xi$ that maximizes the profile likelihood. Doing so is guaranteed (at least asymptotically) to find the global MLE, and might improve numerical stability in small samples.

\section*{Acknowledgments}
The authors gratefully acknowledge support from the US National Science Fountation NSF DMS-2001433.

\begin{appendix}
\numberwithin{equation}{section}

\section{Proofs concerning the profile likelihood}\label{proof:profile_lik}

For the proof of Proposition \ref{prop:profile_lik_uniqueML} and \ref{prop:xi_derivative}, we focus on the mathematical form of the log-likelihood $L_n$ while fixing $n$ and a sequence of numbers $Y_1,\ldots, Y_n$. We begin by recalling the Seitz inequality, which generalizes both Cauchy and Chebyshev's sum inequalities; see \citet{seitz1936remarque}. The following lemma is a special case of the original Seitz inequality.
\begin{lemma}[Seitz inequality]\label{seitz_ineq}
Let $\boldsymbol{x}=(x_1,\ldots,x_n)$, $\boldsymbol{y}=(y_1,\ldots,y_n)$, $\boldsymbol{z}=(z_1,\ldots,z_n)$ and $\boldsymbol{u}=(u_1,\ldots,u_n)$ be given sequences of real numbers. If for every pair of indices $i,j\;(i<j)$ and for every pair $r,s\;(r<s)$
\begin{equation*}
\begin{vmatrix}
x_i&x_j\\
y_i&y_j
\end{vmatrix}\times
\begin{vmatrix}
z_r&z_s\\
u_r&u_s
\end{vmatrix}\geq 0,
\end{equation*}
then the scalar products between the vectors satisfy
\begin{equation*}
    (\boldsymbol{x}\cdot \boldsymbol{z})\times (\boldsymbol{y}\cdot \boldsymbol{u})\geq  (\boldsymbol{y}\cdot \boldsymbol{z})\times (\boldsymbol{x}\cdot \boldsymbol{u}).
\end{equation*}
\end{lemma}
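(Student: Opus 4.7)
The plan is to rewrite the difference $(\boldsymbol{x}\cdot\boldsymbol{z})(\boldsymbol{y}\cdot\boldsymbol{u}) - (\boldsymbol{y}\cdot\boldsymbol{z})(\boldsymbol{x}\cdot\boldsymbol{u})$ as a single sum of products of $2\times 2$ determinants indexed by pairs $i<j$, after which the hypothesized sign condition applies termwise.

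First I would expand the two scalar products, producing the double sum
$$
(\boldsymbol{x}\cdot\boldsymbol{z})(\boldsymbol{y}\cdot\boldsymbol{u}) - (\boldsymbol{y}\cdot\boldsymbol{z})(\boldsymbol{x}\cdot\boldsymbol{u}) = \sum_{i,j=1}^n (x_i y_j - y_i x_j)\, z_i u_j.
$$
The diagonal $i=j$ vanishes. Next I would split the remaining indices into the pieces with $i<j$ and $i>j$; in the latter, relabelling $(i,j) \leftrightarrow (j,i)$ and using that $x_iy_j-y_ix_j$ is antisymmetric under the swap flips the sign and pairs $z_iu_j$ with $z_j u_i$. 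Collecting the two contributions yields
$$
(\boldsymbol{x}\cdot\boldsymbol{z})(\boldsymbol{y}\cdot\boldsymbol{u}) - (\boldsymbol{y}\cdot\boldsymbol{z})(\boldsymbol{x}\cdot\boldsymbol{u}) = \sum_{i<j}(x_iy_j - y_ix_j)(z_i u_j - z_j u_i) = \sum_{i<j}\begin{vmatrix}x_i & x_j\\ y_i & y_j\end{vmatrix}\begin{vmatrix}z_i & z_j\\ u_i & u_j\end{vmatrix}.
$$

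To close the argument, I would invoke the hypothesis with the specialization $(r,s)=(i,j)$, which guarantees that every summand on the right is nonnegative, so the whole sum is nonnegative. This is the desired inequality.

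There is no real obstacle here beyond careful index bookkeeping in the antisymmetry step. It is worth observing that the stated hypothesis is slightly stronger than what this argument actually consumes: only the diagonal specialization $(r,s)=(i,j)$ is needed, so a sufficient assumption would be that the product of the two determinants is nonnegative for each common pair $i<j$. The stronger symmetric form in the lemma, however, is the natural one for the downstream applications to the profile likelihood later in the paper.
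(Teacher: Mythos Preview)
Your argument is correct: the Binet--Cauchy style identity
\[
(\boldsymbol{x}\cdot\boldsymbol{z})(\boldsymbol{y}\cdot\boldsymbol{u}) - (\boldsymbol{y}\cdot\boldsymbol{z})(\boldsymbol{x}\cdot\boldsymbol{u})
=\sum_{i<j}\begin{vmatrix}x_i & x_j\\ y_i & y_j\end{vmatrix}\begin{vmatrix}z_i & z_j\\ u_i & u_j\end{vmatrix}
\]
is exactly what drops out of the antisymmetrization you describe, and specializing the hypothesis to $(r,s)=(i,j)$ then finishes the proof termwise. Your side remark that only this diagonal specialization is actually used is also accurate.

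The paper itself does not supply a proof of this lemma; it simply quotes the inequality as a special case of the classical result and cites \citet{seitz1936remarque}. So there is no ``paper's own proof'' to compare against here---you have furnished a short self-contained argument where the authors were content to give a reference. Your proof is the standard one and would be a perfectly acceptable replacement for the bare citation.
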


\begin{lemma}\label{lem:monotone_on_beta}
Suppose $Y_1, \ldots, Y_n\in \mathbb{R}$ are not all equal to each other, and fix any non-zero $\xi$ such that $-1<\xi<n-1$ . Define
$$H_n(\beta)=\frac{\sumN [\xi(Y_i-\beta)]^{-1-1/\xi}}{\sumN [\xi(Y_i-\beta)]^{-1/\xi}\sumN [\xi(Y_i-\beta)]^{-1}}-\frac{\xi+1}{n},$$
the domain of which is $\{\beta<Y_{(1)}\}$ if $\xi>0$, and $\{\beta>Y_{(n)}\}$ if $\xi<0$. Then $H_n$ is a strictly increasing function of $\beta$, and there exists a unique $\tilde{\beta}$ in the domain such that $H_n(\tilde{\beta})=0$, and $\text{sgn}(H_n(\beta))=\text{sgn}(\beta-\tilde{\beta})$ for any $\beta$ in the domain.
\end{lemma}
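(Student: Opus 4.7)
The plan is to prove strict monotonicity of $H_n$ via direct differentiation, and then use the intermediate value theorem together with explicit boundary limits to obtain existence and uniqueness of the zero. With $w_i = \xi(Y_i - \beta) > 0$ in the domain, set $f = \sum_i w_i^{-1-1/\xi}$, $g = \sum_i w_i^{-1/\xi}$, $h = \sum_i w_i^{-1}$, $A = \sum_i w_i^{-2-1/\xi}$, and $B = \sum_i w_i^{-2}$, so that $H_n = f/(gh) - (\xi+1)/n$. Using $dw_i/d\beta = -\xi$, one finds $f' = (\xi+1)A$, $g' = f$, and $h' = \xi B$, whence
\begin{equation*}
H_n'(\beta) = \frac{N}{(gh)^2}, \qquad N := h(gA - f^2) + \xi g(hA - fB).
\end{equation*}

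The first summand $h(gA - f^2)$ is strictly positive by Cauchy--Schwarz applied to the sequences $(w_i^{-1/(2\xi)})$ and $(w_i^{-1-1/(2\xi)})$, with strictness coming from the assumption that the $Y_i$ are not all equal. For the second summand, expanding the double product gives the identity
\begin{equation*}
hA - fB = \sum_{i<j} w_i^{-2} w_j^{-2} (w_i - w_j)(w_j^{-1/\xi} - w_i^{-1/\xi}).
\end{equation*}
Because $x \mapsto x^{-1/\xi}$ is decreasing when $\xi > 0$ and increasing when $\xi < 0$, each summand on the right has sign equal to $\operatorname{sgn}(\xi)$, so that $\xi(hA - fB) > 0$; equivalently this is the Seitz inequality of Lemma \ref{seitz_ineq} applied to $\boldsymbol{x} = (1)$, $\boldsymbol{y} = (w_i^{-1/\xi})$, $\boldsymbol{z} = (w_i^{-1})$, $\boldsymbol{u} = (w_i^{-2})$, with the direction of the determinant condition flipping with $\operatorname{sgn}(\xi)$. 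Combining the two pieces yields $H_n'(\beta) > 0$ throughout the domain.

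It remains to compute the boundary limits. As $\beta$ moves away from the data (that is, $\beta \to -\infty$ when $\xi > 0$ and $\beta \to +\infty$ when $\xi < 0$), $w_i/(-\xi\beta) \to 1$ so all $w_i$ become asymptotically equal, giving $f/(gh) \to 1/n$ and $H_n \to -\xi/n$, a quantity whose sign is opposite to that of $\xi$. As $\beta$ approaches the finite boundary, the extreme $w_i$ collapses to $0$; a dominant-term expansion yields $H_n \to (n-\xi-1)/n > 0$ when $\xi > 0$ (using $\xi < n-1$) and $H_n \to -(\xi+1)/n < 0$ when $\xi < 0$ (using $\xi > -1$). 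In both regimes the two boundary limits straddle zero, so strict monotonicity together with the intermediate value theorem produces a unique $\tilde{\beta}$ in the domain with $H_n(\tilde{\beta}) = 0$, and the equality $\operatorname{sgn}(H_n(\beta)) = \operatorname{sgn}(\beta - \tilde{\beta})$ is then immediate. The main obstacle is the monotonicity step: discovering the decomposition $N = h(gA - f^2) + \xi g(hA - fB)$ and verifying positivity of the second piece in both regimes are not transparent, because $\xi$ and $hA - fB$ each change sign with $\operatorname{sgn}(\xi)$ while their product stays positive.
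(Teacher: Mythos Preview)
Your proof is correct and follows essentially the same route as the paper: differentiate $H_n$, split the numerator into a Cauchy--Schwarz piece and a second piece handled by (what amounts to) the Seitz/Chebyshev sum inequality, then combine with the boundary limits and the intermediate value theorem. The only notable difference is cosmetic but useful: by working throughout with $w_i=\xi(Y_i-\beta)>0$ rather than with $Y_i-\beta$, your decomposition $N=h(gA-f^2)+\xi g(hA-fB)$ treats the cases $\xi>0$ and $\xi<0$ uniformly, whereas the paper writes the derivative in terms of $(Y_i-\beta)$, carries out the $\xi>0$ case in detail, and then asserts that the $\xi<0$ case is similar. Your direct double-sum identity for $hA-fB$ is exactly the content of the paper's Seitz application, so the two arguments coincide.
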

\begin{proof}
\textbf{A. $\xi>0$.} We first show that $H_n'(\beta)>0$ for $\beta\in (-\infty,Y_{(1)})$, where
\begin{small}
\begin{equation*}
    \begin{split}
        &H_n'(\beta)=\frac{\frac{1}{\xi}\sumN (Y_i-\beta)^{-1}\left[\sumN (Y_i-\beta)^{-2-\frac{1}{\xi}}\sumN (Y_i-\beta)^{-\frac{1}{\xi}}-\left(\sumN (Y_i-\beta)^{-1-\frac{1}{\xi}}\right)^2\right]}{\left(\sumN (Y_i-\beta)^{-1}\sumN (Y_i-\beta)^{-\frac{1}{\xi}}\right)^2}+\\
        &\frac{\sumN (Y_i-\beta)^{-\frac{1}{\xi}}\left[\sumN (Y_i-\beta)^{-2-\frac{1}{\xi}}\sumN (Y_i-\beta)^{-1}-\sumN (Y_i-\beta)^{-1-\frac{1}{\xi}}\sumN (Y_i-\beta)^{-2}\right]}{\left(\sumN (Y_i-\beta)^{-1}\sumN (Y_i-\beta)^{-\frac{1}{\xi}}\right)^2}.
    \end{split}
\end{equation*}
\end{small}
By Cauchy inequality,
$$\sumN (Y_i-\beta)^{-2-\frac{1}{\xi}}\sumN (Y_i-\beta)^{-\frac{1}{\xi}}>\left(\sumN (Y_i-\beta)^{-1-\frac{1}{\xi}}\right)^2.$$
Note the equality does not hold because $Y_1,\ldots,Y_n$ are not all equal to each other. Therefore, it suffices to prove 
\begin{equation}\label{seitz1}
    \sumN (Y_i-\beta)^{-2-\frac{1}{\xi}}\sumN (Y_i-\beta)^{-1}\geq \sumN (Y_i-\beta)^{-1-\frac{1}{\xi}}\sumN (Y_i-\beta)^{-2}.
\end{equation}
Denote $x_i=(Y_{(i)}-\beta)^{-\frac{1}{\xi}}$, $y_i=1$, $z_i=(Y_{(i)}-\beta)^{-2}$ and $u_i=(Y_{(i)}-\beta)^{-1}$. Since for any pairs of indices $i<j$ and $r<s$,
\begin{equation*}
\begin{vmatrix}
x_i&x_j\\
y_i&y_j
\end{vmatrix}\times
\begin{vmatrix}
z_r&z_s\\
u_r&u_s
\end{vmatrix}=
\frac{[(Y_{(r)}-\beta)^{-1}-(Y_{(s)}-\beta)^{-1}][(Y_{(i)}-\beta)^{-\frac{1}{\xi}}-(Y_{(j)}-\beta)^{-\frac{1}{\xi}}]}{(Y_{(r)}-\beta)(Y_{(s)}-\beta)}>0,
\end{equation*}
we can apply the Seitz inequality to obtain \eqref{seitz1}, and hence  $H_n'(\beta)>0$ for $\beta\in (-\infty,Y_{(1)})$.

On the other hand, we can easily check the limits of $H_n$ on the bounds of the domain:
\begin{equation*}
    \begin{split}
        \lim_{\beta\searrow-\infty}H_n(\beta)&=\frac{1}{n}-\frac{\xi+1}{n}=-\frac{\xi}{n}<0,\\
        \lim_{\beta\nearrow Y_{(1)}}H_n(\beta)&=1-\frac{\xi+1}{n}>0.
    \end{split}
\end{equation*}
By the intermediate zero theorem, there exists an unique $\tilde{\beta}\in(-\infty,Y_{(1)})$ that is determined by $\xi$ and the $Y_i$'s such that $H_n(\tilde{\beta})=0$, and $H_n(\beta)<0$ when $\beta<\tilde{\beta}$, or vice versa.

Furthermore, when the fixed $\xi$ diminishes to 0, $\lim_{\beta\searrow-\infty}H_n(\beta)$ is close to 0. This means the root can only be found close to the boundary because $H_n$ is increasing and thus positive almost everywhere on $(-\infty,Y_{(1)})$. Treat $\tilde{\beta}$ as a function of fixed $\xi$, and we conclude
$$\tilde{\beta}\rightarrow -\infty, \text{ as }\xi\searrow 0.$$
Similarly, when the fixed $\xi$ approaches $n-1$ from below, $\lim_{\beta\nearrow Y_{(1)}}H_n(\beta)$ is close to 0, and $H_n$ is negative almost everywhere on $(-\infty,Y_{(1)})$. Therefore,
$$\tilde{\beta}\rightarrow Y_{(1)}, \text{ as }\xi\nearrow n-1.$$

\textbf{B. $-1<\xi<0$.} Following a similar proof, we can verify that $H'_n(\beta)>0$ is also true in this case. We now evaluate the limits of $H_n$ on the bounds of the domain:
\begin{equation*}
    \begin{split}
        \lim_{\beta\searrow Y_{(n)}}H_n(\beta)&=-\frac{\xi+1}{n}<0,\\
        \lim_{\beta\nearrow \infty}H_n(\beta)&=\frac{1}{n}-\frac{\xi+1}{n}=-\frac{\xi}{n}>0.
    \end{split}
\end{equation*}
Therefore, there exists an unique $\tilde{\beta}\in(Y_{(n)},\infty)$ that is determined by the fixed $\xi$ and $Y_i$'s such that $H_n(\tilde{\beta})=0$, and $H_n(\beta)<0$ when $\beta<\tilde{\beta}$, or vice versa.

As we move $\xi$ to $-1$ from the right and 0 from the left, it can be shown accordingly that
\begin{equation*}
        \tilde{\beta}\rightarrow Y_{(n)}, \text{ as }\xi\searrow -1,
        \text{ and }\tilde{\beta}\rightarrow \infty, \text{ as }\xi\nearrow 0.
\end{equation*}
\end{proof}

\subsection{Proof of Proposition \ref{prop:profile_lik_uniqueML}}

\begin{proof}
By \eqref{wi} and \eqref{Log-lik}, the log-likelihood function on one cross section $\Omega_n(\xi)$ can be expressed in terms of $(\tau,\beta)$:
\begin{equation*}
    \begin{split}
        h(\tau,\beta):=&L_n(\tau,\beta,\xi)\\
        =&\frac{n}{\xi}\log\tau-\frac{\xi+1}{\xi}\sumN\log[\xi(Y_i-\beta)]-\tau^{1/\xi}\sumN [\xi(Y_i-\beta)]^{-1/\xi}.
    \end{split}
\end{equation*}
Now we profile out $\tau$ via fixing $\beta$. Taking the partial derivative with respect to $\tau$, we obtain
$$\frac{\partial h}{\partial \tau}=\frac{n}{\xi\tau}-\tau^{1/\xi}\frac{1}{\xi\tau}\sumN [\xi(Y_i-\beta)]^{-1/\xi}.$$
The root of the above partial derivative is $$\tau(\beta)=\left\{\frac{1}{n}\sumN [\xi(Y_i-\beta)]^{-1/\xi}\right\}^{-\xi}.$$
For both $\xi>0$ and $\xi<0$, it holds that $\frac{\partial h}{\partial \tau}>0$ when $\tau<\tau(\beta)$, and $\frac{\partial h}{\partial \tau}<0$ when $\tau>\tau(\beta)$. Fixing $\beta$, $h$ as a function of $\tau$ monotonically increases and then decreases when $\tau$ grows from $0$ to $\infty$. Thus, the maximizer of $L_n$ on $\Omega_n(\xi)$ must be on the parametric graph $\{(\beta,\tau):\tau=\tau(\beta)\}$. To locate the maximizer, we simply need to find $\beta$ such that $h(\beta,\tau(\beta))$ is maximized.

Simple calculations lead to
$$h_1(\beta):=h(\tau(\beta),\beta)=-n\log\left\{\frac{1}{n}\sumN [\xi(Y_i-\beta)]^{-1/\xi}\right\}-\frac{\xi+1}{\xi}\sumN\log[\xi(Y_i-\beta)]-n.$$
Similarly, we examine the derivative:
\begin{equation*}
    h_1'(\beta)=\sumN [\xi(Y_i-\beta)]^{-1}\left\{(\xi+1)-\frac{n\sumN [\xi(Y_i-\beta)]^{-1-1/\xi}}{\sumN [\xi(Y_i-\beta)]^{-1/\xi}\sumN [\xi(Y_i-\beta)]^{-1}}\right\}.
\end{equation*}
By Lemma \ref{lem:monotone_on_beta}, there exists an unique $\tilde{\beta}$ in the domain such that $h_1'(\tilde{\beta})=0$. Meanwhile, $h_1'(\beta)>0$ when $\beta<\tilde{\beta}$ and $h_1'(\beta)<0$ when $\beta>\tilde{\beta}$. That is, $h_1(\beta)$ attains the global unique maximum at $\tilde{\beta}$.

To sum up, there exists a maximizer $(\tau(\tilde{\beta}),\tilde{\beta})$ of $L_n$ on the cross section $\Omega_n(\xi)$ that is unique and global with respect to $\Omega_n(\xi)$. The sufficient and necessary condition for $(\tilde{\tau},\tilde{\beta})$ to be the maximizer is \eqref{max_cross_section_cond}. The proof for maximizing $L_n$ on $\Omega_n(0)$ is deferred to the supplementary material.

Since $(\tau,\mu)\mapsto(\tau,\beta)$ is a one-to-one mapping, it's clear that $(\tilde{\tau},\tilde{\mu})=(\tilde{\tau},\tilde{\beta}+\tilde{\tau}/\xi)$ is the unique maximizer on $\Omega_n(\xi)$ under the parametrization $(\tau,\mu)$. We write $(\tilde{\tau},\tilde{\mu})$ as $(\tau_n(\xi),\mu_n(\xi))$ to highlight that  $(\tilde{\tau},\tilde{\mu})$ depends on $Y_1,\ldots,Y_n$ and $\xi$.
\end{proof}

\vskip 0.3cm
\subsection{Proof of Proposition \ref{prop:xi_derivative}}
\begin{proof}
For notational simplicity, we will denote $\hat{\delta}_i=\hat{\xi}_n(Y_i-\hat{\beta}_n)$ and $\delta_i:=\delta_i(\xi,n)=\xi(Y_i-\beta_n(\xi))$, where $\beta_n(\xi)=\mu_n(\xi)-\tau_n(\xi)/\xi$. We calculate 
\begin{equation*}
    \begin{split}
        \left\{\sumN\delta_i^{-1/\xi}\right\}'&=\left\{\sumN[\xi(Y_i-\beta_n(\xi))]^{-1/\xi}\right\}'\\
        &=\frac{1}{\xi^2} \sumN\delta_i^{-1/\xi}\log \delta_i+\beta'_n(\xi)\sumN\delta_i^{-1-1/\xi}-\frac{1}{\xi^2}\sumN\delta_i^{-1/\xi},
    \end{split}
\end{equation*}
and by \eqref{eqn:PL_xi},
\begin{equation}\label{eqn:PL_dev}
    \begin{split}
        PL'_n(\xi)&=-\frac{n\left\{\sumN\delta_i^{-1/\xi}\right\}'}{\sumN\delta_i^{-1/\xi}}+\frac{\sumN\log\delta_i}{\xi^2}+\frac{\xi+1}{\xi}\sumN \left\{\log\delta_i\right\}'\\
        &=-\frac{n}{\xi}-\frac{n\sumN\delta_i^{-1/\xi}\log\delta_i}{\xi^2\sumN\delta_i^{-1/\xi}}+\frac{1}{\xi^2}\sumN \log\delta_i\\
        &\hspace{2cm}+\beta_n'(\xi)\left\{(\xi+1)\sumN\delta_i^{-1}-\frac{n\sumN\delta_i^{-1-1/\xi}}{\sumN\delta_i^{-1/\xi}}\right\},
    \end{split}
\end{equation}
in which the term in the curly brackets is zero for $\xi\neq 0$ due to the second equation of \eqref{max_cross_section_cond}.

Next we examine the limits of $ PL'_n(\xi)$ as $\xi\nearrow n-1$, $\xi\rightarrow 0$ and $\xi\searrow -1$.

\textbf{A. $\xi\nearrow n-1$.} From Lemma \ref{lem:monotone_on_beta}, we know that
$$\beta_n(\xi)\nearrow Y_{(1)}, \text{ as }\xi\nearrow n-1,$$
and thus
\begin{small}
\begin{equation*}
    \begin{split}
        \sum_{i=2}^n (Y_{(i)}-\beta_n(\xi))^{-1/\xi}\log(Y_{(i)}-\beta_n(\xi))&\rightarrow \sum_{i=2}^n \left(Y_{(i)}-Y_{(1)}\right)^{-1/(n-1)}\log\left(Y_{(i)}-Y_{(1)}\right)=:C_1,\\
        \sum_{i=2}^n (Y_{(i)}-\beta_n(\xi))^{-1/\xi}&\rightarrow \sum_{i=2}^n \left(Y_{(i)}-Y_{(1)}\right)^{-1/(n-1)}=:C_2,\\
        \sum_{i=2}^n \log(Y_{(i)}-\beta_n(\xi))&\rightarrow \sum_{i=2}^n\log\left(Y_{(i)}-Y_{(1)}\right)=:C_3.\\
    \end{split}
\end{equation*}
\end{small}
As a result,
\begin{footnotesize}
\begin{equation*}
    \begin{split}
        \lim_{\xi\nearrow n-1}PL'_n(\xi)=&-\lim_{\xi\nearrow n-1}\frac{1}{\xi^2}\left[\frac{nC_1+n(Y_{(1)}-\beta_n(\xi))^{-1/\xi}\log(Y_{(1)}-\beta_n(\xi))}{C_2+(Y_{(1)}-\beta_n(\xi))^{-1/\xi}}-\log(Y_{(1)}-\beta_n(\xi))\right]+\\
        &\hspace{5cm}\frac{C_3}{(n-1)^2}-\frac{n}{n-1}\\
        =&-\lim_{\xi\nearrow n-1}\frac{(n-1)\log(Y_{(1)}-\beta_n(\xi))}{\xi^2}+\frac{C_3}{(n-1)^2}-\frac{n}{n-1}=\infty.
    \end{split}
\end{equation*}
\end{footnotesize}

\textbf{B. $\xi\rightarrow 0$.} In this case, we need to be more cautious about $\beta_n'(\xi)$ and the term in the curly brackets in \eqref{eqn:PL_dev}. It holds that as $\xi\rightarrow 0$,
\begin{equation}\label{eqn:PL_n_0_dev}
    \begin{split}
        PL'_n(\xi)=&\frac{n\mu'_n(0)-\sumN [Y_i-\mu_n(0)+\tau'_n(0)]}{\tau_n(0)}+\\
        &\frac{\sumN [Y_i-\mu_n(0)+\tau'_n(0)]^2-n\tau'_n(0)^2}{2\tau_n(0)^2}+o(1),
    \end{split}
\end{equation}
the proof of which is deferred to the supplementary material.

\textbf{C. $\xi\searrow -1$.} Denote $\delta_{(i)}=\xi(Y_{(i)}-\beta_n(\xi))$. From Lemma \ref{lem:monotone_on_beta}, we know that
$$\beta_n(\xi)\searrow Y_{(n)}, \text{ as }\xi\searrow -1.$$
Thus, $\delta_{(n)}=\xi(Y_{(n)}-\beta_n(\xi))\searrow 0$ and
\begin{equation*}
    \begin{split}
        \sum_{i=1}^{n-1} \delta_{(i)}^{-1/\xi}\log\delta_{(i)}&\rightarrow \sum_{i=1}^{n-1} \left(Y_{(n)}-Y_{(i)}\right)\log\left(Y_{(n)}-Y_{(i)}\right)=:C'_1,\\
        \sum_{i=1}^{n-1} \delta_{(i)}^{-1/\xi}&\rightarrow \sum_{i=1}^{n-1} \left(Y_{(n)}-Y_{(i)}\right)=:C'_2,\\
        \sum_{i=1}^{n-1} \log\delta_{(i)}&\rightarrow \sum_{i=1}^{n-1}\log\left(Y_{(n)}-Y_{(i)}\right)=:C'_3.\\
    \end{split}
\end{equation*}
As a result,
\begin{equation*}
    \begin{split}
        \lim_{\xi\searrow -1}PL'_n(\xi)&=n+C'_3-\lim_{\xi\searrow -1}\left\{\frac{nC'_1+n\delta_{(n)}^{-1/\xi}\log\delta_{(n)}}{\xi^2C'_2+\xi^2\delta_{(n)}^{-1/\xi}}-\frac{\log\delta_{(n)}}{\xi^2}\right\}\\
        &=n+C'_3-\frac{nC'_1}{C'_2}+\lim_{\xi\searrow -1}\left\{\frac{\log\delta_{(n)}}{\xi^2}\right\}=-\infty.
    \end{split}
\end{equation*}
\end{proof}


\section{Convergence rate of the support boundary}\label{proof:converge_rate}
\begin{proposition}\label{rate_MLE}
Suppose $Y_1,\ldots, Y_n$ are i.i.d. samples from some parametric model $\mathcal{M}=\{p(y|\btheta): y\in \mathcal{Y}, \btheta\in \Omega\}$, where $\mathcal{Y}$ may either depend on $\btheta$ or not. If $\hat{\beta}_n=\hat{\beta}(Y_1,\ldots, Y_n)$ is a strongly consistent estimator of $\beta=\beta(\btheta)$ that satisfies $\sqrt{n}(\hat{\beta}_n-\beta)=V+o_p(1)$ and $\hat{\beta}_n\asconv \beta$, where $V$ is a continuous random variable. Then
\begin{equation*}
    n^\gamma (\hat{\beta}_n-\beta)\asconv 0,\;\forall \gamma\in (0,1/2).
\end{equation*}
\end{proposition}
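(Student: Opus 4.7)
The proof plan is a direct rate argument. I would first factor out the $\sqrt n$-scaling by writing
\[
n^\gamma(\hat{\beta}_n - \beta) \;=\; n^{\gamma - 1/2}\cdot \sqrt{n}(\hat{\beta}_n - \beta),
\]
which reduces the problem to handling a deterministic sequence $n^{\gamma-1/2}$ that vanishes (since $\gamma<1/2$) multiplied by a stochastic factor controlled by the asymptotic-normality-type hypothesis.

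Second, substituting the hypothesis $\sqrt{n}(\hat{\beta}_n-\beta)=V+o_p(1)$ gives
\[
n^\gamma(\hat{\beta}_n-\beta) \;=\; n^{\gamma-1/2}\,V \;+\; n^{\gamma-1/2}\cdot o_p(1).
\]
Because $V$ is a continuous (hence almost surely finite) random variable, the first summand converges to zero almost surely by the elementary fact $n^{\gamma-1/2}\to 0$ applied pathwise. The second summand is at worst $o_p(1)$ since $n^{\gamma-1/2}\le 1$, so it vanishes at least in probability.

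The main obstacle is promoting this convergence in probability of the remainder term to the required almost-sure convergence. The lever would be the strong-consistency hypothesis $\hat{\beta}_n\asconv\beta$ combined with the tightness of $\sqrt n(\hat\beta_n-\beta)$ implied by the decomposition: one intersects the full-measure set on which $\hat{\beta}_n\to\beta$ with a further full-measure set on which $\sqrt n(\hat{\beta}_n-\beta)$ stays pathwise bounded, obtained either via a Borel--Cantelli estimate using the tail of $V$ to bound $\mathcal{P}(\sqrt n|\hat\beta_n-\beta|>\varepsilon n^{1/2-\gamma})$, or, in the i.i.d.\ setting of the proposition, via a law-of-the-iterated-logarithm bound on the score-based linearization that underlies the asymptotic expansion. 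On this intersection the deterministic prefactor $n^{\gamma-1/2}$ multiplied by any pathwise bounded factor vanishes, so both summands of the display go to zero almost surely and the conclusion follows.
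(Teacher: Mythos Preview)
Your decomposition $n^\gamma(\hat\beta_n-\beta)=n^{\gamma-1/2}\sqrt n(\hat\beta_n-\beta)$ is the natural starting point, and you correctly isolate the obstacle: the hypothesis only gives $\sqrt n(\hat\beta_n-\beta)\to V$ in probability, and upgrading this to almost-sure control is the entire content of the proposition. However, the mechanism you sketch does not close the gap. Tightness of $\sqrt n(\hat\beta_n-\beta)$ is a distributional statement and does \emph{not} produce a full-measure set on which the sequence is pathwise bounded (an i.i.d.\ standard normal sequence is tight yet $\limsup_n|X_n|=\infty$ a.s.). Your Borel--Cantelli suggestion, read literally, would need $\sum_n\mathcal P(|V|>\varepsilon n^{1/2-\gamma})<\infty$, i.e.\ a moment of $V$ of order exceeding $(1/2-\gamma)^{-1}$, which is not part of the hypothesis; and even granting that, the $o_p(1)$ remainder still has to be controlled separately at the same summable rate. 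The LIL alternative presupposes a score-based i.i.d.\ linearization that the abstract statement does not provide.

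The paper's proof sidesteps pathwise boundedness of $\sqrt n(\hat\beta_n-\beta)$ altogether and proceeds in two stages. First, it applies Markov's inequality to a high moment of the ratio $\sqrt n(\hat\beta_n-\beta)/V-1$ (choosing an integer $q$ with $q(1/2-\gamma)>1$) and uses Borel--Cantelli to obtain the eventual a.s.\ bound $n^{\gamma'}|\hat\beta_n-\beta|<|V|$ for any fixed $\gamma'<1/2$. Second---and this is the idea your outline is missing---it combines that bound at an auxiliary exponent $\gamma'=\gamma/(2\gamma+\eta)$ with the strong-consistency hypothesis via the factorization
\[
n^\gamma|\hat\beta_n-\beta|=\bigl[n^{\gamma/(2\gamma+\eta)}|\hat\beta_n-\beta|\bigr]^{2\gamma+\eta}\cdot|\hat\beta_n-\beta|^{\,1-2\gamma-\eta},
\]
with $\eta>0$ chosen so that $1-2\gamma-\eta>0$. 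The first factor is eventually at most $|V|^{2\gamma+\eta}$, while the second tends to zero a.s.\ by $\hat\beta_n\asconv\beta$; hence the product vanishes a.s. The Borel--Cantelli step is only asked to deliver an eventual \emph{bound} (not convergence to zero), and the convergence to zero is then extracted from the separate strong-consistency hypothesis. Your plan conflates these two roles.
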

\begin{proof}
Fix $\gamma\in (0,1/2)$, and find integer $q$ such that $q(1/2-\gamma)>1$. Since $\sqrt{n}(\hat{\beta}_n-\beta)/V-1=o_p(1)$, we know that $E\left(|\sqrt{k}(\hat{\beta}_k-\beta)/V-1|^q\right)\rightarrow 0$ as $k\rightarrow \infty$, that is, there exists $N_q$, such that $E\left(|\sqrt{k}(\hat{\beta}_k-\beta)/V-1|^q\right)<1$ for all $k>N_q$. 

First, we prove 
\begin{equation}\label{Oas}
    P(\left\{\omega\in\mathcal{X}: \exists N \text{ such that, }\forall n\geq N,\; n^\gamma|\hat{\beta}_n(\omega)-\beta|<|V(\omega)|\right\})=1.
\end{equation}
Define
\begin{equation*}
    A_n=\bigcap_{k=n}^\infty \left\{\omega\in \mathcal{X}: k^\gamma|\hat{\beta}_k(\omega)-\beta|<|V(\omega)|\right\}.
\end{equation*}
Since $A_n\subseteq A_{n+1}$, it is clear that \eqref{Oas} is equivalent to $P(A_n)\rightarrow 1$ as $n\rightarrow\infty$. However,
\begin{equation*}
\begin{split}
     P(\left\{\omega\in \mathcal{X}: k^\gamma|\hat{\beta}_k(\omega)-\beta|\geq\right.&|V(\omega)|\Big\})=P(k^{\frac{1}{2}}|\hat{\beta}_k-\beta|-|V|\geq(k^{\frac{1}{2}-\gamma}-1)|V|)\\
     &\leq P\left(\left|\frac{\sqrt{k}(\hat{\beta}_k-\beta)}{V}-1\right|\geq k^{\frac{1}{2}-\gamma}-1\right)+P(V=0)\\
     &\leq \frac{E\left(|\sqrt{k}(\hat{\beta}_k-\beta)/V-1|^q\right)}{(k^{\frac{1}{2}-\gamma}-1)^q}\leq \frac{1}{(k^{\frac{1}{2}-\gamma}-1)^q}, \;\forall k>N_q,
\end{split}
\end{equation*}
in which we utilized Markov's inequality and the fact that $V$ is continuous. 

Note $\sum_{k=2}^\infty \frac{1}{(k^{1/2-\gamma}-1)^q}$ is convergent for $q(1/2-\gamma)>1$. Countable subadditivity implies
\begin{equation*}
    \begin{split}
        P(A_n^c)=P\left(\bigcup_{k=n}^\infty \left\{\omega\in \mathcal{X}: k^\gamma|\hat{\beta}_k(\omega)-\beta|\geq|V(\omega)|\right\}\right)\leq \sum_{k=n}^\infty \frac{1}{(k^{\frac{1}{2}-\gamma}-1)^q}\rightarrow 0,
    \end{split}
\end{equation*}
which completes the proof of \eqref{Oas}.

Since $\gamma<1/2$, there exists $\eta>0$ such that $1-2\gamma-\eta>0$ and $\frac{\gamma}{2\gamma+\eta}<1/2$. Define
\begin{equation*}
    \begin{split}
        \gamma=&\left\{\omega\in \mathcal{X}: \lim_{n\rightarrow\infty}\hat{\beta}_n(\omega)=\beta\right\},\;\; C=\{\omega\in \mathcal{X}:V(\omega)\neq 0\},\\
        D=&\left\{\omega\in\mathcal{X}: \exists N \text{ such that, }\forall n\geq N,\; n^{\gamma/(2\gamma+\eta)}|\hat{\beta}_n(\omega)-\beta|<|V(\omega)|\right\}.
    \end{split}
\end{equation*}

By definition of strong consistency and the preceding discussion, $P(B)=P(C)=P(D)=1$, which implies $P(B\cap C \cap D)=1-P(B^c\cup C^c\cup D^c)\geq 1-P(B^c)-P(C^c)-P(D^c)=1$, i.e. $P(B\cap C \cap D)=1$. Focusing on a particular element $\omega \in B\cap C\cap D$, we have for $\epsilon>0$, there exists $N_1$ such that, $|\hat{\beta}_{n}(\omega)-\beta|<\left(\epsilon/|V(\omega)|^{2\gamma+\eta}\right)^{1/(1-2\gamma-\eta)}$ for any $n>N_1$; there also exists $N_2$ such that, $n^{\gamma/(2\gamma+\eta)}|\hat{\beta}_n(\omega)-\beta|<|V(\omega)|$ for any $n>N_2$. Therefore, for any $n>\max\{N_1,N_2\}$,
\begin{small}
\begin{equation*}
   n^\gamma|\hat{\beta}_n(\omega)-\beta|=\left[n^{\gamma/(2\gamma+\eta)}|\hat{\beta}_n(\omega)-\beta|\right]^{2\gamma+\eta}\cdot |\hat{\beta}_n(\omega)-\beta|^{1-2\gamma-\eta}\leq |V(\omega)|^{2\gamma+\eta}\cdot \frac{\epsilon}{|V(\omega)|^{2\gamma+\eta}}=\epsilon. 
\end{equation*}
\end{small}
Equivalently, if $\omega \in B\cap C\cap D$, then $n^\gamma(\hat{\beta}_n(\omega)-\beta)\rightarrow 0$. Since $P(B\cap C\cap D)=1$, $n^\gamma (\hat{\beta}_n-\beta)\asconv 0$.
\end{proof}

\vskip 0.6cm
\subsection{Proof of Proposition \ref{prop:rate_Min}}

\begin{proof}
Here we only provide the proof of \ref{boundary_pos}, the positive shape $\xi_0$. The proof of the case $\xi_0<0$ is analogous.

(i) \textbf{Prove $(\log n)^{(1+\gamma)\xi_0}(Y_{(1)}-\beta_0)\asconv \infty$.} Note that $Y_i>\beta_0,\; i=1,\ldots,n$, and $Y_{(1)}>\beta_0$. Fix $M>0$, and define
\begin{equation*}
\begin{split}
    A_n&=\left\{\omega:\forall k\geq n, (\log k)^{(1+\gamma)\xi_0}\left(\min_{1\leq i\leq k}Y_i(\omega)-\beta_0\right)\geq M\right\}\\
    &=\left\{\omega:Y_k\geq \beta_0+\frac{M}{(\log n)^{(1+\gamma)\xi_0}},\; 1\leq k\leq n,\text{ and } Y_k\geq \beta_0+\frac{M}{(\log k)^{(1+\gamma)\xi_0}},\; k\geq n+1\right\}.
\end{split}
\end{equation*}
Then $(\log n)^{(1+\gamma)\xi_0}(Y_{(1)}-\beta_0)\asconv \infty$ if and only if $P(A_n)\rightarrow 1$ for any $M>0$.

We can easily calculate
\begin{equation*}
\begin{split}
    P(A_n)&=\left[P\left(Y_1\geq\beta_0+\frac{M}{(\log n)^{(1+\gamma)\xi_0}}\right)\right]^n\cdot\prod_{k=n+1}^{\infty}P\left(Y_k\geq\beta_0+\frac{M}{(\log k)^{(1+\gamma)\xi_0}}\right)\\
    &=\left[1-\exp\left(-M_1(\log n)^{1+\gamma}\right)\right]^n\prod_{k=n+1}^{\infty}\left[1-\exp\left(-M_1(\log k)^{1+\gamma}\right)\right],
\end{split}
\end{equation*}
where constant $M_1=\left({M\xi_0}/{\tau_0}\right)^{-1/\xi_0}>0$. By L'Hospital rule,
\begin{footnotesize}
\begin{equation}\label{l'hospital}
     \lim_{x\rightarrow\infty}x\log\left\{1-\exp\left(-M_1 (\log x)^{1+\gamma}\right)\right\}=-M_1(1+\gamma)\lim_{x\rightarrow\infty}\frac{x\log^\gamma x\cdot\exp\left(-M_1 (\log x)^{1+\gamma}\right)}{1-\exp\left(-M_1 (\log x)^{1+\gamma}\right)}=0.
\end{equation}
\end{footnotesize}
Thus, $\left[1-\exp\left(-M_1(\log n)^{1+\gamma}\right)\right]^n\rightarrow\exp(0)=1$ as $n\rightarrow\infty$.

\vskip 0.2cm
On the other hand, consider $\sum_{k=1}^\infty\log\left\{1-\exp\left(-M_1(\log k)^{1+\gamma}\right)\right\}$. Apply L'Hospital rule again, and we know
\begin{footnotesize}
\begin{equation*}
     \lim_{x\rightarrow\infty}x^2\log\left\{1-\exp\left(-M_1 (\log x)^{1+\gamma}\right)\right\}=-\frac{M_1(1+\gamma)}{2}\lim_{x\rightarrow\infty}\frac{x^2\log^\gamma x\cdot\exp\left(-M_1 (\log x)^{1+\gamma}\right)}{1-\exp\left(-M_1 (\log x)^{1+\gamma}\right)}=0.
\end{equation*}
\end{footnotesize}
Thus, $k^2\log\left\{1-\exp\left(-M_1(\log k)^{1+\gamma}\right)\right\}\rightarrow 0$ as $k\rightarrow\infty$. By limit comparison test, we know that $\sum_{k=1}^\infty\log\left\{1-\exp\left(-M_1(\log k)^{1+\gamma}\right)\right\}<\infty$. Hence, as $n\rightarrow \infty$,
\begin{equation*}
    \prod_{k=n+1}^{\infty}\left[1-\exp\left(-M_1(\log k)^{1+\gamma}\right)\right]=\exp\left\{\sum_{k=n+1}^\infty\log\left\{1-\exp\left(-M_1(\log k)^{1+\gamma}\right)\right\}\right\}\rightarrow 1,
\end{equation*}
which completes proving $P(A_n)\rightarrow 1$ for any $M>0$.

\vskip 0.37cm
\noindent (ii) \textbf{Prove $(\log n)^{(1-\gamma)\xi_0}(Y_{(1)}-\beta_0)\asconv0$.} If $\gamma\geq 1$, the second strong convergence holds because $Y_{(1)}\asconv \beta_0$. The more interesting case is when $1-\gamma>0$.

We fix $\epsilon>0$, and define
\begin{equation*}
\begin{split}
     B_n&=\bigg\{\omega: \exists k\geq n, \log^{(1-\gamma)\xi_0} (k)\cdot\left(\min_{1\leq i\leq k}Y_i(\omega)-\beta_0\right)>\epsilon\bigg\}=\bigcup_{k=n}^\infty C_k,
\end{split}
\end{equation*}
where $C_k=\left\{\omega: \log^{(1-\gamma)\xi_0} k\cdot\left(\min_{1\leq i\leq k}Y_i(\omega)-\beta_0\right)>\epsilon\right\}$, and
\begin{equation*}
        P( C_k)=\prod_{i=1}^k P\left(Y_i\geq \beta_0+\frac{\epsilon}{\log^{(1-\gamma)\xi_0}(k)}\right)=[1-\exp\left(-M_2\log^{1-\gamma}(k)\right)]^k,
\end{equation*}
where $M_2=\left({\xi_0\epsilon}/{\tau_0}\right)^{-1/\xi_0}>0$. 

Now we want to prove $k^{2}P(C_k)\rightarrow 0$ as $k\rightarrow\infty$. Application of L'Hospital rule similar to that leading to \eqref{l'hospital} yields
\begin{footnotesize}
\begin{equation*}
\begin{split}
    \lim_{x\rightarrow\infty}&\frac{x\log[1-\exp\left(-M_2\log^{1-\gamma}(x)\right)]}{\log x}=\lim_{x\rightarrow\infty}\frac{M_2(1-\gamma)\log^{-\gamma}(x)x^{-1}\exp\left(-M_2\log^{1-\gamma}(x)\right)}{(x^{-2}-x^{-2}\log x)[1-\exp\left(-M_2\log^{1-\gamma}(x)\right)]}\\
    &=-M_2(1-\gamma)\lim_{x\rightarrow\infty}\frac{\log(x)}{\log(x)-1}\cdot \frac{\exp\left(-M_2\log^{1-\gamma}(x)\right)}{1-\exp\left(-M_2\log^{1-\gamma}(x)\right)}\cdot x\log^{-\gamma-1}(x)\\
    &=-M_2(1-\gamma)\lim_{x\rightarrow\infty}\sqrt{x}\log^{-\gamma-1}(x)\cdot\exp\left(-M_2\log^{1-\gamma}(x)+\frac{1}{2}\log(x)\right) =-\infty,
\end{split}
\end{equation*}
\end{footnotesize}
which in turn results in
\begin{footnotesize}
\begin{equation*}
    \log \{x^2[1-\exp\left(-M_2\log^{1-\gamma}(x)\right)]^x\}=\log x\cdot\left\{\frac{x\log[1-\exp\left(-M_2\log^{1-\gamma}(x)\right)]}{\log x}+2\right\}\rightarrow -\infty,\text{ as } x\rightarrow\infty.
\end{equation*}
\end{footnotesize}
Therefore, we have $P(C_k)=o(1/k^{2})$. 

By the limit comparison test, the series $\sum_{k=1}^\infty P(C_k)$ is convergent, and its tail satisfies
\begin{equation*}
    \forall \epsilon>0, \;\;P(B_n)=P\left(\bigcup_{k=n}^\infty C_k\right)\leq \sum_{k=n}^\infty P(C_k)\rightarrow 0,\text{ as } n\rightarrow\infty.
\end{equation*}
This proves $(\log n)^{(1-\gamma)\xi_0}\left(\min_{i}Y_i-\beta_0\right)\asconv 0,\;\forall \gamma\in (0,1)$.

\vskip 0.37cm
\noindent (iii) \textbf{Prove $ n^{-(1+\gamma)\xi_0}Y_{(n)}\asconv 0$.} Similarly to (i), we fix $\epsilon>0$, and define
\begin{equation*}
\begin{split}
    D_n&=\left\{\omega:\forall k\geq n, k^{-(1+\gamma)\xi_0}\left(\max_{1\leq i\leq k}Y_i(\omega)-\beta_0\right)\leq \epsilon\right\}\\
    &=\left\{\omega:Y_k\leq \beta_0+\epsilon n^{(1+\gamma)\xi_0},\; 1\leq k\leq n,\text{ and } Y_k\leq \beta_0+\epsilon k^{(1+\gamma)\xi_0}, \;  k\geq n+1\right\}.
\end{split}
\end{equation*}
To prove $n^{-(1+\gamma)\xi_0}Y_{(n)}\asconv  0$, it suffices to show $n^{-(1+\gamma)\xi_0}(Y_{(n)}-\beta_0)\asconv  0$, which is true if and only if $P(D_n)\rightarrow 1$ for any $\epsilon>0$.

The independence of $Y_k$, $k\geq 1$ gives 
\begin{equation*}
    P(D_n)=\exp\left(-\frac{M_2}{n^\gamma}\right)\exp\left(-M_2\sum_{k=n+1}^\infty \frac{1}{k^{1+\gamma}}\right),
\end{equation*}
where $M_2=\left({\xi_0\epsilon}/{\tau_0}\right)^{-1/\xi_0}>0$. Since $1+\gamma>1$, we know by limit comparison test that $\sum_{k=n+1}^\infty 1/k^{1+\gamma}\rightarrow 0$, whence $P(D_n)\rightarrow 1$ as $n\rightarrow\infty$.

\vskip 0.37cm
\noindent (iv) \textbf{Prove $n^{-(1-\gamma)\xi_0}Y_{(n)}\asconv\infty$}. Similar to (ii), we only consider $0<\gamma<1$, and for $M>0$ define 
\begin{equation*}
\begin{split}
     E_n&=\bigg\{\omega: \exists k\geq n, k^{-(1-\gamma)\xi_0}\left(\max_{1\leq i\leq k}Y_i(\omega)-\beta_0\right)< M\bigg\}=\bigcup_{k=n}^\infty F_k,
\end{split}
\end{equation*}
where $F_k=\{\omega:k^{-(1-\gamma)\xi_0}(\max_i Y_i(\omega)-\beta_0)< M\}$, and
\begin{equation*}
    P(F_k)=\prod_{i=1}^k P\left(Y_i\leq \beta_0+M k^{(1-\gamma)\xi_0}\right)=\exp(-M_1 k^\gamma),
\end{equation*}
where $M_1=\left({M\xi_0}/{\tau_0}\right)^{-1/\xi_0}>0$.

Clearly the series $\sum_{k=1}^\infty P(F_k)$ is convergent, and its tail satisfies
\begin{equation*}
    \forall \epsilon>0, \;\;P(E_n)=P\left(\bigcup_{k=n}^\infty F_k\right)\leq \sum_{k=n}^\infty P(F_k)\rightarrow 0,\text{ as } n\rightarrow\infty.
\end{equation*}
This proves $n^{-(1-\gamma)\xi_0}\left(Y_{(n)}-\beta_0\right)\asconv \infty$, which implies $n^{-(1-\gamma)\xi_0}Y_{(n)}\asconv \infty$.
\end{proof}

\vskip 0.5cm
\begin{corollary}\label{rate_comp}
Suppose $Y_1,\ldots, Y_n\iidY P_{\btheta_0}$. If $\xi_0>0$, for any $ \gamma\in [0,1/2)$,
\begin{equation*}
    \frac{n^\gamma(\hat{\beta}_n-\beta_0)}{Y_{(1)}-\beta_0}\asconv 0.
\end{equation*}

If $-1/2<\xi_0<0$, for any $\gamma\in [0,\xi_0+1/2)$,
\begin{equation*}
    \frac{n^\gamma(\hat{\beta}_n-\beta_0)}{\beta_0-Y_{(n)}}\asconv 0.
\end{equation*}
\end{corollary}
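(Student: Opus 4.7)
The plan is to combine Proposition~\ref{rate_MLE}, which yields $n^{\gamma'}(\hat{\beta}_n - \beta_0) \asconv 0$ for every $\gamma' \in (0, 1/2)$, with Proposition~\ref{prop:rate_Min}, which quantifies the rate at which the relevant extreme order statistic approaches $\beta_0$. The key observation is that when $\xi_0 > 0$ the gap $Y_{(1)} - \beta_0$ decays only at a poly-logarithmic rate, and when $\xi_0 < 0$ the gap $\beta_0 - Y_{(n)}$ decays only at the polynomial rate $n^{\xi_0}$; in both cases this is strictly slower than the parametric $n^{-1/2}$ rate attained by $\hat{\beta}_n$, leaving enough slack for the ratio to vanish almost surely. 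Before applying Proposition~\ref{rate_MLE} I would verify that $\hat{\beta}_n = \hat{\mu}_n - \hat{\tau}_n/\hat{\xi}_n$ meets its hypothesis, namely $\sqrt{n}(\hat{\beta}_n - \beta_0) = V + o_p(1)$ for a continuous $V$; this follows by applying the delta method to the smooth map $(\tau,\mu,\xi) \mapsto \mu - \tau/\xi$ together with the asymptotic normality of $\hat{\btheta}_n$ from \citet{bucher2017maximum}, since the Jacobian does not vanish at $\btheta_0$ and the Gaussian limit is non-degenerate.

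For the case $\xi_0 > 0$, fix any $\gamma \in [0, 1/2)$, choose $\gamma' \in (\gamma, 1/2)$, and pick any $\gamma_1 > 0$. I would decompose
\begin{equation*}
\frac{n^\gamma(\hat{\beta}_n - \beta_0)}{Y_{(1)} - \beta_0} = \bigl[n^{\gamma'}(\hat{\beta}_n - \beta_0)\bigr] \cdot \left[\frac{n^{\gamma - \gamma'}}{Y_{(1)} - \beta_0}\right].
\end{equation*}
The first bracket converges to zero a.s.\ by Proposition~\ref{rate_MLE}. By Proposition~\ref{prop:rate_Min}\ref{boundary_pos}, $(\log n)^{(1+\gamma_1)\xi_0}(Y_{(1)} - \beta_0) \asconv \infty$, so eventually almost surely $1/(Y_{(1)} - \beta_0)$ is dominated by $(\log n)^{(1+\gamma_1)\xi_0}$. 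Since $\gamma - \gamma' < 0$ and polynomial decay beats poly-logarithmic growth, $n^{\gamma - \gamma'}(\log n)^{(1+\gamma_1)\xi_0} \to 0$, so the second bracket also vanishes almost surely and the product does likewise.

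For the case $\xi_0 < 0$, fix $\gamma \in [0, \xi_0 + 1/2)$ and choose $\gamma_1 > 0$ small enough that $\gamma - (1+\gamma_1)\xi_0 < 1/2$; such a choice is possible because $(1+\gamma_1)\xi_0 \uparrow \xi_0$ as $\gamma_1 \downarrow 0$ and $\gamma < \xi_0 + 1/2$ strictly. Part (B) of Proposition~\ref{prop:rate_Min} gives $n^{-(1+\gamma_1)\xi_0}(\beta_0 - Y_{(n)}) \asconv \infty$, hence eventually almost surely $\beta_0 - Y_{(n)} \geq n^{(1+\gamma_1)\xi_0}$, which yields the domination
\begin{equation*}
\left|\frac{n^\gamma(\hat{\beta}_n - \beta_0)}{\beta_0 - Y_{(n)}}\right| \leq n^{\gamma - (1+\gamma_1)\xi_0}\,|\hat{\beta}_n - \beta_0|,
\end{equation*}
whose right-hand side tends to zero a.s.\ by Proposition~\ref{rate_MLE}. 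The main subtlety, and the place one must be most careful, is the tightness of the constraint $\gamma < \xi_0 + 1/2$: the margin between the parametric MLE rate $n^{-1/2}$ and the boundary rate $n^{\xi_0}$ collapses as $\xi_0$ approaches $-1/2$, so the restriction $\xi_0 > -1/2$ imposed throughout the paper is precisely what makes the denominator bound usable here.
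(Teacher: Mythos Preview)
Your proof is correct and follows essentially the same approach as the paper: both arguments split the ratio into a numerator controlled by Proposition~\ref{rate_MLE} (the $n^{\gamma'}$ rate of $\hat\beta_n$) and a denominator controlled by Proposition~\ref{prop:rate_Min} (the slower boundary rate), with the only cosmetic difference being that the paper writes the $\xi_0>0$ case as a quotient $n^{\gamma+\eta}(\hat\beta_n-\beta_0)\big/ n^{\eta}(Y_{(1)}-\beta_0)$ rather than your product form. Your explicit verification of the delta-method hypothesis for $\hat\beta_n$ and the poly-log versus polynomial comparison are details the paper leaves implicit, so if anything your write-up is slightly more complete.
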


\begin{proof}
When $\xi_0>0$, choose $\eta>0$ such that $\gamma+\eta<1/2$. By Proposition \ref{rate_MLE}, $n^{\gamma+\eta}(\hat{\beta}_n-\beta_0)\asconv 0$, and by Proposition \ref{prop:rate_Min}, 
\begin{equation*}
    \frac{n^\gamma(\hat{\beta}_n-\beta_0)}{Y_{(1)}-\beta_0}=\frac{n^{\gamma+\eta}(\hat{\beta}_n-\beta_0)}{n^\eta(Y_{(1)}-\beta_0)}\asconv 0.
\end{equation*}

When $-1/2<\xi_0<0$, since $-\xi_0<1/2-\gamma$, choose $\eta>0$ such that $-\xi_0<\eta<1/2-\gamma$. By Proposition \ref{prop:rate_Min}, $n^\eta(\beta_0-Y_{(n)})\asconv \infty$, and by Proposition \ref{rate_MLE},
\begin{equation*}
    \frac{n^\gamma(\hat{\beta}_n-\beta_0)}{\beta_0-Y_{(n)}}=\frac{n^{\gamma+\eta}(\hat{\beta}_n-\beta_0)}{n^\eta(\beta_0-Y_{(n)})}\asconv 0.
\end{equation*}
\end{proof}


\section{Proofs of pseudo-law of large numbers}\label{proof:pseudo_SLLN}

For the proof of Proposition \ref{prop:pseudo_SLLN}, we begin by proving a few useful results.
\begin{lemma}\label{lem:wi_abs_logwi}
Suppose $\xi_0\neq 0$, and $Y\sim P_{\btheta_0}$. Denote $w(\btheta_0)=\xi_0(Y-\beta_0)/tau_0$. Then for any constant $\alpha$ such that $\alpha\xi_0+1>0$ and a positive integer $b$, we have
\begin{equation}\label{eqn:wi_abs_logwi}
    E_{\btheta_0}\left[w^{-\alpha}(\btheta_0)|\log w(\btheta_0)|^b\right] < |\xi_0|^b\Gamma^{(b)}(\alpha\xi_0+1)+\frac{3|\xi_0|^b}{(\alpha\xi_0+1)^{b+1}}\Gamma(b+1).
\end{equation}
\end{lemma}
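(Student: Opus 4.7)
The plan is to convert the expectation into a standard integral against the Gamma weight $e^{-z}\,dz$ and then compare that integral with $\Gamma^{(b)}(\alpha\xi_0+1)$. First, I would show that the change of variable $z = w^{-1/\xi_0}$ transforms $w(\btheta_0)$ into a standard exponential variable, regardless of the sign of $\xi_0$. A short Jacobian calculation (handling $\xi_0>0$ and $\xi_0<0$ separately, since the map $w\mapsto z$ is monotone decreasing in the first case and monotone increasing in the second) together with $w^{-\alpha}=z^{\alpha\xi_0}$ and $|\log w|^b=|\xi_0|^b|\log z|^b$ yields the unified representation
\begin{equation*}
    E_{\btheta_0}\!\left[w^{-\alpha}(\btheta_0)\,|\log w(\btheta_0)|^b\right]=|\xi_0|^b\int_0^\infty z^{\alpha\xi_0}|\log z|^b e^{-z}\,dz,
\end{equation*}
where the assumption $\alpha\xi_0+1>0$ ensures convergence at $0$ and the $e^{-z}$ factor controls the behavior at $\infty$.

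Next, I would compare this integral with $\Gamma^{(b)}(\alpha\xi_0+1)=\int_0^\infty z^{\alpha\xi_0}(\log z)^b e^{-z}\,dz$ by splitting at $z=1$. Since $(\log z)^b=(-1)^b(-\log z)^b$ on $(0,1)$ and agrees with $|\log z|^b$ on $[1,\infty)$, subtraction gives
\begin{equation*}
    \int_0^\infty z^{\alpha\xi_0}|\log z|^b e^{-z}\,dz=\Gamma^{(b)}(\alpha\xi_0+1)+\bigl[1-(-1)^b\bigr]\int_0^1 z^{\alpha\xi_0}(-\log z)^b e^{-z}\,dz.
\end{equation*}
When $b$ is even the residual vanishes and the stated inequality is immediate (and in fact strict, because the added term $3|\xi_0|^b\Gamma(b+1)/(\alpha\xi_0+1)^{b+1}$ is strictly positive). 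When $b$ is odd the residual doubles and must be bounded explicitly.

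For the residual I would substitute $u=-\log z$, so that $z=e^{-u}$, $dz=-e^{-u}\,du$, and the integration limits become $u:\infty\to 0$. This gives
\begin{equation*}
    \int_0^1 z^{\alpha\xi_0}(-\log z)^b e^{-z}\,dz=\int_0^\infty u^b e^{-(\alpha\xi_0+1)u}e^{-e^{-u}}\,du<\int_0^\infty u^b e^{-(\alpha\xi_0+1)u}\,du=\frac{\Gamma(b+1)}{(\alpha\xi_0+1)^{b+1}},
\end{equation*}
where strictness follows from $e^{-e^{-u}}<1$ for every $u\in\mathbb{R}$. Multiplying by $|\xi_0|^b$ and inserting back yields the bound with the sharper constant $2$ in front of $\Gamma(b+1)/(\alpha\xi_0+1)^{b+1}$, which is strictly smaller than the constant $3$ stated in the lemma; hence \eqref{eqn:wi_abs_logwi} follows.

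There is no real obstacle in this proof; the only mildly delicate point is verifying that the change of variables $z=w^{-1/\xi_0}$ produces the \emph{same} $\mathrm{Exp}(1)$ integrand for both signs of $\xi_0$, which amounts to checking that the two sign flips (Jacobian sign and direction of integration) cancel. The slack between my constant $2$ and the lemma's constant $3$ means no further optimization is needed to conclude.
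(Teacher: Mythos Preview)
Your proof is correct and follows essentially the same route as the paper: the substitution $z=w^{-1/\xi_0}$ to reduce to an $\mathrm{Exp}(1)$ integral, the split at $z=1$ to isolate $\Gamma^{(b)}(\alpha\xi_0+1)$, and the evaluation of the residual $\int_0^1 z^{\alpha\xi_0}(-\log z)^b\,dz$ via $u=-\log z$. The only cosmetic difference is that the paper drops $e^{-s}\le 1$ \emph{before} the final substitution and records the loose constant $3$, whereas you carry $e^{-e^{-u}}<1$ through and obtain the sharper constant $2$; either way the stated bound follows.
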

\begin{proof}
It is easy to calculate
\begin{equation*}
    \begin{split}
         E_{\btheta_0}&\left[w^{-\alpha}(\btheta_0)|\log w(\btheta_0)|^b\right]
       =|\xi_0|^b\int_{0}^{\infty} s^{\alpha\xi_0}\exp(-s)|\log s|^bds\\
       =&|\xi_0|^b\int_{0}^{\infty} s^{\alpha\xi_0}\exp(-s)\log^b sds+\left((-1)^b-1\right)|\xi_0|^b\int_{0}^{1} s^{\alpha\xi_0}\exp(-s)\log^b sds\\
       =&|\xi_0|^b\Gamma^{(b)}(\alpha\xi_0+1)+\left((-1)^b-1\right)|\xi_0|^b\int_{0}^{1} s^{\alpha\xi_0}\exp(-s)\log^b sds.
    \end{split}
\end{equation*}
Furthermore,
\begin{equation*}
\begin{split}
    \left|\left((-1)^b-1\right)\xi_0^b\right.&\left.\int_{0}^{1} s^{\alpha\xi_0}\exp(-s)\log^b sds\right|\leq 2|\xi_0|^b \int_{0}^{1} s^{\alpha\xi_0}\exp(-s)(-\log s)^bds\\
    &< 3|\xi_0|^b \int_{0}^{1} s^{\alpha\xi_0}(-\log s)^bds=\frac{3|\xi_0|^b}{(\alpha\xi_0+1)^{b+1}}\Gamma(b+1),
\end{split}
\end{equation*}
which proves \eqref{eqn:wi_abs_logwi}.
\end{proof}

\begin{lemma}\label{jacobian}
If $\hat{\btheta}_n\in\Omega_n$ solves the likelihood equations of  $L_n(\btheta)$, $\triangledown L_n(\btheta)=0$, then
\begin{equation*}
    \begin{split}
        \sumN (1+\hat{\xi}_n)w_i^{-1}(\hat{\btheta}_n)&=\sumN w_i^{-1-1/\hat{\xi}_n}(\hat{\btheta}_n),\\
        \sumN w_i^{-1/\hat{\xi}_n}(\hat{\btheta}_n)&=n,\\
        \sumN \log w_i(\hat{\btheta}_n)-n\hat{\xi}_n&= \sumN w_i^{-1/\hat{\xi}_n}(\hat{\btheta}_n)\log w_i(\hat{\btheta}_n).
    \end{split}
\end{equation*}
\end{lemma}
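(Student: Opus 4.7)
The plan is to obtain the three identities directly from the three score equations $\partial L_n/\partial \mu = 0$, $\partial L_n/\partial \tau = 0$, $\partial L_n/\partial \xi = 0$ evaluated at $\hat{\btheta}_n$, feeding the simpler identities into the more complicated ones to eliminate terms. The key enabler is that $w_i(\btheta)=1+\xi(Y_i-\mu)/\tau$ has very clean partial derivatives: $\partial w_i/\partial \mu=-\xi/\tau$, $\partial w_i/\partial \tau=-(w_i-1)/\tau$, and $\partial w_i/\partial \xi=(w_i-1)/\xi$. Substituting these into \eqref{Log-lik} and chain-ruling through $\log w_i$ and $w_i^{-1/\xi}$ produces manageable expressions.

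First I would differentiate in $\mu$: the $-n\log\tau$ term drops out, and after a cancellation of $\xi/\tau$ factors the equation collapses to $\frac{\xi+1}{\tau}\sum_i w_i^{-1}=\frac{1}{\tau}\sum_i w_i^{-1-1/\xi}$, which is the first identity. Next, I would differentiate in $\tau$. The chain rule turns each $w_i^{-1/\xi}$ and $\log w_i$ into a $(w_i-1)/\tau$ factor, producing the grouped sums $\sum_i(1-w_i^{-1})$ and $\sum_i(w_i^{-1/\xi}-w_i^{-1-1/\xi})$. Multiplying by $\xi\tau$, collecting constants, and using the first identity to cancel $(\xi+1)\sum_i w_i^{-1}$ against $\sum_i w_i^{-1-1/\xi}$ leaves $n-\sum_i w_i^{-1/\xi}=0$, which is the second identity.

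Finally, the $\xi$-derivative is the busiest: $\log w_i$ terms appear both from differentiating the explicit $\xi$-coefficient $(\xi+1)/\xi$ and from chain-ruling $w_i^{-1/\xi}$, while $(w_i-1)/w_i$ terms appear from the $w_i$ chain rule in both the logarithmic and the power factors. After multiplying through by $\xi^2$ and rewriting $(w_i-1)/w_i=1-w_i^{-1}$ and $(w_i^{-1/\xi})(w_i-1)/w_i=w_i^{-1/\xi}-w_i^{-1-1/\xi}$, one invokes the first identity to kill the pair $(\xi+1)\sum_i w_i^{-1}$ and $\sum_i w_i^{-1-1/\xi}$, and then the second identity to replace $\sum_i w_i^{-1/\xi}$ by $n$. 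What remains is exactly $\sum_i \log w_i-n\xi=\sum_i w_i^{-1/\xi}\log w_i$, the third identity.

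The only delicate step is the bookkeeping in the $\xi$-equation, since $\log w_i$ contributions appear from two different sources and it is easy to mislay a sign or a $1/\xi^2$ factor; beyond that the computation is routine. No asymptotics, no inequalities, and no appeal to $\Omega_n$ beyond the positivity of each $w_i(\hat{\btheta}_n)$ are needed—this is a purely algebraic consequence of the score equations.
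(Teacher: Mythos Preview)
Your proposal is correct and is precisely the approach the paper has in mind: the paper does not spell out a proof but simply remarks that ``this lemma is an immediate result of the definition of local MLE,'' i.e., the three identities are obtained by setting the three score equations $\partial L_n/\partial\mu=\partial L_n/\partial\tau=\partial L_n/\partial\xi=0$ at $\hat{\btheta}_n$ and simplifying exactly as you describe. Your bookkeeping (including the cancellations via the first and second identities inside the $\tau$- and $\xi$-equations) is accurate.
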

\begin{remark}
This lemma is an immediate result of the definition of local MLE. It dovetails with Proposition \ref{prop:pseudo_SLLN}, which says that $\frac{1}{n}\sumN w_i^{-1/\hat{\xi}_n}(\hat{\btheta}_n) \asconv 1$. However, for the power of $1/\hat{\xi}_n$, $\frac{1}{n}\sumN w_i^{-1/\hat{\xi}_n}(\hat{\btheta}_n)\equiv 1$. Similarly for the other two equations, the asymptotic relations turned out to be always true.
\end{remark}

To prove Proposition \ref{prop:pseudo_SLLN}, let us first only change the power with functions of the local MLE before we replace the $\btheta_0$ entirely with $\hat{\btheta}_n$.
\begin{lemma}\label{mle_onpower}
Suppose $Y_1, \ldots, Y_n\iidY P_{\btheta_0}$ where $\xi_0\neq 0$, and $\hat{\btheta}_n$ is the local MLE of $L_n(\btheta)$ that is strongly consistent. Then for constants $k$ and $a$ such that $k\xi_0+a+1>0$,
\begin{equation}\label{eqn:mle_onpower}
    \frac{1}{n}\sumN w_i^{-k-\frac{a}{\hat{\xi}_n}}(\btheta_0)\log^b w_i(\btheta_0) \asconv   (-\xi_0)^b\Gamma^{(b)}(k\xi_0+a+1),
\end{equation}
where $b$ is a non-negative integer.
\end{lemma}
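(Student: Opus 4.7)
My plan is to reduce the lemma to the ordinary strong law of large numbers (SLLN), and to absorb the replacement of the deterministic exponent $-k-a/\xi_0$ by the random $-k-a/\hat{\xi}_n$ through a first-order mean value estimate. Set $\alpha_n := -k - a/\hat{\xi}_n$ and $\alpha_0 := -k - a/\xi_0$; because $\hat{\xi}_n \asconv \xi_0 \neq 0$, we have $\alpha_n \asconv \alpha_0$. The case $a=0$ is trivial, as the exponent is deterministic and SLLN applies directly, so assume $a\neq 0$.

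First, I would apply SLLN to the iid sequence $w_i^{\alpha_0}(\btheta_0)\log^b w_i(\btheta_0)$. Its absolute first moment is finite by Lemma \ref{lem:wi_abs_logwi} (take $\alpha = -\alpha_0 = k + a/\xi_0$, whose hypothesis $\alpha\xi_0 + 1 = k\xi_0 + a + 1 > 0$ is exactly our assumption), and its expectation equals $(-\xi_0)^b\Gamma^{(b)}(k\xi_0 + a + 1)$, so
\begin{equation*}
    \frac{1}{n}\sumN w_i^{\alpha_0}(\btheta_0)\log^b w_i(\btheta_0) \asconv (-\xi_0)^b\Gamma^{(b)}(k\xi_0 + a + 1).
\end{equation*}

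Next, I would close the gap between this average and the one appearing in \eqref{eqn:mle_onpower}. Using strict positivity of $k\xi_0 + a + 1$, pick $\epsilon > 0$ small enough that $k\xi_0 + a + 1 \pm \epsilon\xi_0 > 0$, so that Lemma \ref{lem:wi_abs_logwi} applies at both shifted exponents $-\alpha_0 \pm \epsilon$. Strong consistency of $\hat{\xi}_n$ guarantees $\alpha_n \in [\alpha_0 - \epsilon, \alpha_0 + \epsilon]$ eventually almost surely. The mean value theorem applied to $\alpha \mapsto w_i^\alpha(\btheta_0)$ gives
\begin{equation*}
    w_i^{\alpha_n}(\btheta_0) - w_i^{\alpha_0}(\btheta_0) = (\alpha_n - \alpha_0)\, w_i^{\bar\alpha_i}(\btheta_0)\log w_i(\btheta_0),
\end{equation*}
with $\bar\alpha_i \in [\alpha_0 - \epsilon, \alpha_0 + \epsilon]$. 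Taking absolute values, multiplying by $|\log w_i(\btheta_0)|^b$, averaging over $i$, and using the crude envelope $w^{\bar\alpha_i} \leq w^{\alpha_0 - \epsilon} + w^{\alpha_0 + \epsilon}$ yields
\begin{equation*}
    \left|\frac{1}{n}\sumN \left[w_i^{\alpha_n}(\btheta_0) - w_i^{\alpha_0}(\btheta_0)\right]\log^b w_i(\btheta_0)\right| \leq |\alpha_n - \alpha_0|\cdot\frac{1}{n}\sumN \left[w_i^{\alpha_0-\epsilon}(\btheta_0) + w_i^{\alpha_0+\epsilon}(\btheta_0)\right]|\log w_i(\btheta_0)|^{b+1}.
\end{equation*}
Lemma \ref{lem:wi_abs_logwi} (now with $b+1$ in place of $b$) shows the summands on the right have finite mean, hence by SLLN the average on the right converges a.s.\ to a finite constant. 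Since $|\alpha_n - \alpha_0|\asconv 0$, the whole right-hand side vanishes almost surely, and the triangle inequality combined with the SLLN limit above yields \eqref{eqn:mle_onpower}.

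The main obstacle is the second step. The shared factor $\hat{\xi}_n$ couples all $n$ summands and so prevents a direct SLLN argument on the target average. The MVT factorization isolates $|\alpha_n - \alpha_0|$ as a scalar prefactor and reduces the problem to securing an integrable envelope uniformly over a small $\epsilon$-neighborhood of $\alpha_0$; the strict inequality $k\xi_0 + a + 1 > 0$ is precisely what provides the slack needed to choose such an $\epsilon$.
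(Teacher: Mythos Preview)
Your proof is correct and follows essentially the same route as the paper: apply the SLLN at the deterministic exponent $\alpha_0=-k-a/\xi_0$, then control the perturbation to $\alpha_n=-k-a/\hat{\xi}_n$ via an envelope $w^{\alpha_0-\epsilon}+w^{\alpha_0+\epsilon}$ whose moments are finite by Lemma~\ref{lem:wi_abs_logwi}. The only cosmetic difference is that the paper uses the explicit two-sided inequality $1+x\log t\le t^x\le 1+x\log t+(t^\eta+t^{-\eta})(\log t)^2 x^2$ (yielding a quadratic remainder) in place of your mean value theorem; your first-order argument is a bit more economical but leads to the same envelope and the same conclusion.
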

\begin{proof}
Firstly, we prove that 
\begin{equation}\label{a_power_x}
    1+x\log t\leq t^x\leq 1+x\log t + (t^{\eta}+t^{-\eta})(\log t)^2x^2
\end{equation} 
for $t>0$, and $|x|<\eta$, where $\eta>0$. The first inequality holds immediately on account of $1+x\leq e^x, \forall x\in\mathbb{R}$. For the second inequality, we denote $f(x)=t^x- 1-x\log t - (t^{\eta}+t^{-\eta})(\log t)^2x^2$. Since $|x|<\eta$, it's obvious that $f''(x)=t^x(\log t)^2-2(t^{\eta}+t^{-\eta})(\log t)^2<0$, and $f$ is a strictly concave function. The maximum of $f$ in $(-\eta,\eta)$ is $x=0$ since $f'(0)=0$. Therefore, $f(x)\leq f(0)=0$.

Secondly, we examine the difference between $w_i^{-k-\frac{a}{\hat{\xi}_n}}(\btheta_0)$ and $w_i^{-k-\frac{a}{\xi_0}}(\btheta_0)$. 

Fix $\eta>0$ such that $|\eta\xi_0|<k\xi_0+a+1$. Since $\frac{1}{\hat{\xi}_n}\asconv \frac{1}{\xi_0}$, there almost surely exists $N$ such that $\left|\frac{a}{\hat{\xi}_n}-\frac{a}{\xi_0}\right|<\eta$ for all $n>N$. Apply \eqref{a_power_x} to get
\begin{small}
\begin{equation*}
    \left|w_i^{\frac{a}{\xi_0}-\frac{a}{\hat{\xi}_n}}(\btheta_0)-1-\left(\frac{a}{\xi_0}-\frac{a}{\hat{\xi}_n}\right)\log w_i(\btheta_0)\right|\leq [w_i^{\eta}(\btheta_0)+w_i^{-\eta}(\btheta_0)]\log^2 w_i(\btheta_0)\left(\frac{a}{\xi_0}-\frac{a}{\hat{\xi}_n}\right)^2,
\end{equation*}
\end{small}
in which $w_i(\btheta_0)>0$ for $i=1,\ldots,n$. Multiplying both sides by $w_i^{-k-\frac{a}{\xi_0}}(\btheta_0)\left|\log w_i(\btheta_0)\right|^b$,
\begin{small}
\begin{equation*}
\begin{split}
     \Big|w_i^{-k-\frac{a}{\hat{\xi}_n}}(\btheta_0)\log^b w_i(\btheta_0)&-w_i^{-k-\frac{a}{\xi_0}}(\btheta_0)\log^b w_i(\btheta_0)-\left(\frac{a}{\xi_0}-\frac{a}{\hat{\xi}_n}\right)w_i^{-k-\frac{a}{\xi_0}}(\btheta_0)\log^{b+1} w_i(\btheta_0)\Big|\\
     &\leq [w_i^{-k-\frac{a}{\xi_0}+\eta}(\btheta_0)+w_i^{-k-\frac{a}{\xi_0}-\eta}(\btheta_0)]\left|\log w_i(\btheta_0)\right|^{b+2}\left(\frac{a}{\xi_0}-\frac{a}{\hat{\xi}_n}\right)^2.
\end{split}
\end{equation*}
\end{small}

Summing over $i$, we obtain
\begin{equation}\label{maclaurin}
\begin{split}
     \frac{1}{n}\sumN w_i^{-k-\frac{a}{\hat{\xi}_n}}(\btheta_0)&\log ^bw_i(\btheta_0)=\frac{1}{n}\sumN w_i^{-k-\frac{a}{\xi_0}}(\btheta_0)\log^b w_i(\btheta_0)+\\
     &\left(\frac{a}{\xi_0}-\frac{a}{\hat{\xi}_n}\right)\cdot \frac{1}{n}\sumN w_i^{-k-\frac{a}{\xi_0}}(\btheta_0)\log^{b+1} w_i(\btheta_0)+R(\hat{\btheta}_n),
\end{split}
\end{equation}
where
\begin{small}
\begin{equation*}
\begin{split}
    |R(\hat{\btheta}_n)|\leq\left(\frac{a}{\xi_0}-\frac{a}{\hat{\xi}_n}\right)^2\times\left[\frac{1}{n}\sumN w_i^{-k-\frac{a}{\xi_0}+\eta}(\btheta_0)|\log w_i(\btheta_0)|^{b+2}+\right.\\
    \hspace*{1cm}\left.\frac{1}{n}\sumN w_i^{-k-\frac{a}{\xi_0}-\eta}(\btheta_0)|\log w_i(\btheta_0)|^{b+2}\right].
\end{split}
\end{equation*}
\end{small}
Lemma \ref{lem:wi_abs_logwi} ensures that $E_{\btheta_0}\left\{ w(\btheta_0)^{-\alpha}|\log w(\btheta_0)|^{b}\right\}<\infty$ for any non-negative integer $b$ and $\alpha$ such that $\alpha\xi_0+1>0$. Since $|\eta\xi_0|<k\xi_0+a+1$, we can assure $(k-\eta)\xi_0+a+1>0$ and $(k+\eta)\xi_0+a+1>0$. By law of large numbers, $\frac{1}{n}\sumN w_i^{-k-\frac{a}{\xi_0}+\eta}(\btheta_0)|\log w_i(\btheta_0)|^{b+2}$ and $\frac{1}{n}\sumN w_i^{-k-\frac{a}{\xi_0}-\eta}(\btheta_0)|\log w_i(\btheta_0)|^{b+2}$ are bounded almost surely. Meanwhile, $\left(\frac{a}{\xi_0}-\frac{a}{\hat{\xi}_n}\right)^2\asconv  0$, and hence $R(\hat{\btheta}_n)\asconv  0$.

Also, $\left(\frac{a}{\xi_0}-\frac{a}{\hat{\xi}_n}\right)\cdot \frac{1}{n}\sumN w_i^{-k-\frac{a}{\xi_0}}(\btheta_0)\log^{b+1} w_i(\btheta_0)\asconv  0$, and 
$$\frac{1}{n}\sumN w_i^{-k-\frac{a}{\xi_0}}(\btheta_0)\log^b w_i(\btheta_0)\asconv (-\xi_0)^b\Gamma^{(b)}(k\xi_0+a+1).$$
Consequently we can conclude from \eqref{maclaurin} that \eqref{eqn:mle_onpower} is true.
\end{proof}

\subsection{Proof of Proposition \ref{prop:pseudo_SLLN}}
Next we replace the $\btheta_0$ with $\hat{\btheta}_n$ in $w_i(\btheta_0)$ to prove Proposition \ref{prop:pseudo_SLLN}. 

\begin{proof}
\textbf{(A) Case $\xi_0>0$.} Firstly, we notice that for $\alpha<0$ and $|x-1|<\eta$,
\begin{equation}\label{abs_distance_nb}
   \log^2 x\leq \frac{2}{(1-\eta)^2}(x-1)^2,
\end{equation}
and 
\begin{equation}\label{x_power_a}
    1+t(x-1)\leq x^t\leq 1+t(x-1)+\frac{t(t-1)}{2}(1-\eta)^{t-2}(x-1)^2,
\end{equation} 
where $\eta\in (0,1)$ is a small fixed number. Concavity argument similar to that yielding \eqref{a_power_x} will prove \eqref{abs_distance_nb} and \eqref{x_power_a}.

\vspace{0.5cm}
Secondly, we prove
\begin{equation}\label{log_theta0_theta}
    \frac{1}{n}w_i^{-k-\frac{a}{\hat{\xi}_n}}(\btheta_0)\log^b w_i(\hat{\btheta}_n)\asconv (-\xi_0)^b\Gamma^{(b)}(k\xi_0+a+1)
\end{equation}
for $b\geq 1$ (the case $b=0$ is covered by Lemma \ref{mle_onpower}).

Since $\frac{1}{\hat{\xi}_n}\asconv \frac{1}{\xi_0}$ and by Corollary \ref{rate_comp} $\frac{\hat{\beta}_n-\beta_0}{Y_{(1)}-\beta_0}\asconv 0$ , there almost surely exists $N$ such that for all $n>N$,
\begin{equation*}
    1/{\hat{\xi}_n}>0,\text{ and } \left|\frac{\xi_0(\beta_0-\hat{\beta}_n)}{\tau_0w_i(\btheta_0)}\right|=\left|\frac{\beta_0-\hat{\beta}_n}{Y_i-\beta_0}\right|\stackrel{Y_i>\beta_0}{\leq} \left|\frac{\beta_0-\hat{\beta}_n}{Y_{(1)}-\beta_0}\right|<\eta.
\end{equation*}
We expand the terms using multinomial theorem
\begin{footnotesize}
\begin{equation*}\label{mix_theta0_theta}
\begin{split}
   \log^b w_i(\hat{\btheta}_n)&=\left[\log\left(\frac{\hat{\xi}_n\tau_0}{\hat{\tau}_n\xi_0}\right)+\log w_i(\btheta_0)+\log\left(1+\frac{\xi_0(\beta_0-\hat{\beta}_n)}{\tau_0w_i(\btheta_0)}\right)\right]^b=\log^b\left(\frac{\hat{\xi}_n\tau_0}{\hat{\tau}_n\xi_0}\right)+\log^b w_i(\btheta_0)\\
    &+\sum_{\substack{b_1+b_2+b_3=b\\b_3\geq 1}} \binom{b }{b_1, b_2, b_3} \log^{b_1}\left(\frac{\hat{\xi}_n\tau_0}{\hat{\tau}_n\xi_0}\right) \cdot\log^{b_2} w_i(\btheta_0)\cdot\log^{b_3}\left(1+\frac{\xi_0(\beta_0-\hat{\beta}_n)}{\tau_0w_i(\btheta_0)}\right).
\end{split}
\end{equation*}
\end{footnotesize}
Thus,
\begin{footnotesize}
\begin{equation*}
\begin{split}
   \Big|\log^b w_i(\hat{\btheta}_n)&-\log^b\left(\frac{\hat{\xi}_n\tau_0}{\hat{\tau}_n\xi_0}\right)-\log^b w_i(\btheta_0)\Big|\\
    &\leq\sum_{\substack{b_1+b_2+b_3=b\\b_3\geq 1}} \binom{b }{b_1, b_2, b_3} \left|\log\left(\frac{\hat{\xi}_n\tau_0}{\hat{\tau}_n\xi_0}\right)\right|^{b_1} \cdot|\log w_i(\btheta_0)|^{b_2}\cdot\left|\log\left(1+\frac{\xi_0(\beta_0-\hat{\beta}_n)}{\tau_0w_i(\btheta_0)}\right)\right|^{b_3}\\
    &\leq\sum_{\substack{b_1+b_2+b_3=b\\b_3\geq 1}} \binom{b }{b_1, b_2, b_3} \left|\log\left(\frac{\hat{\xi}_n\tau_0}{\hat{\tau}_n\xi_0}\right)\right|^{b_1} \cdot|\log w_i(\btheta_0)|^{b_2}\cdot\frac{2^{b_3}}{(1-\eta)^{b_3}}\left|\frac{\xi_0(\beta_0-\hat{\beta}_n)}{\tau_0w_i(\btheta_0)}\right|^{b_3}.
\end{split}
\end{equation*}
\end{footnotesize}
The last inequality holds due to \eqref{abs_distance_nb} with $x=1+\frac{\xi_0(\beta_0-\hat{\beta}_n)}{\tau_0w_i(\btheta_0)}$. Multiplying both sides of the inequality by $w^{-k-\frac{a}{\hat{\xi}_n}}_i(\btheta_0)$ and summing over $i$,
\begin{small}
\begin{equation*}
    \begin{split}
        \frac{1}{n}\sumN w^{-k-\frac{a}{\hat{\xi}_n}}_i(\btheta_0)\log^b w_i(\hat{\btheta}_n)=&\frac{1}{n}\sumN w^{-k-\frac{a}{\hat{\xi}_n}}_i(\btheta_0)\log^b w_i(\btheta_0)+\\
        &\frac{1}{n}\log^b\left(\frac{\hat{\xi}_n\tau_0}{\hat{\tau}_n\xi_0}\right)\cdot\sum_{i=1}^nw^{-k-\frac{a}{\hat{\xi}_n}}_i(\btheta_0)+R_1(\hat{\btheta}_n),
    \end{split}
\end{equation*}
\end{small}
where
\begin{small}
\begin{equation*}
\begin{split}
    |R_1(\hat{\btheta}_n)|\leq \sum_{\substack{b_1+b_2+b_3=b\\b_3\geq 1}} \binom{b }{b_1, b_2, b_3}&\frac{2^{b_3}}{(1-\eta)^{b_3}}\left|\frac{\xi_0(\beta_0-\hat{\beta}_n)}{\tau_0}\right|^{b_3} \left|\log\left(\frac{\hat{\xi}_n\tau_0}{\hat{\tau}_n\xi_0}\right)\right|^{b_1} \times\\
    &\frac{1}{n}\sumN w^{-k-\frac{a}{\hat{\xi}_n}-b_3}_i(\btheta_0)|\log w_i(\btheta_0)|^{b_2}.
\end{split}
\end{equation*}
\end{small}
Lemma \ref{mle_onpower} ensures the averages in the right side of last inequality are bounded almost surely. Since $b_3\geq 1$ and $\beta_0-\hat{\beta}_n\asconv 0$, we know $R_1(\hat{\btheta}_n)\asconv  0$.

In the meantime, Lemma \ref{mle_onpower} entails $\frac{1}{n}\log^b\left(\frac{\hat{\xi}_n\tau_0}{\hat{\tau}_n\xi_0}\right)\cdot\sum_{i=1}^nw^{-k-\frac{a}{\hat{\xi}_n}}_i(\btheta_0)\stackrel{a.s}{\rightarrow}0$, and thus \eqref{log_theta0_theta} is proved.

\vspace{0.5cm}
Lastly, we prove $\frac{1}{n}w_i^{-k-\frac{a}{\hat{\xi}_n}}(\hat{\btheta}_n)\log^b w_i(\hat{\btheta}_n)\asconv (-\xi_0)^b\Gamma^{(b)}(k\xi_0+a+1)$.

Apply \eqref{x_power_a} with $x=1+\frac{\xi_0(\beta_0-\hat{\beta}_n)}{\tau_0w_i(\btheta_0)}$, and
\begin{small}
\begin{equation*}
\begin{split}
    \left|\left(1+\frac{\xi_0(\beta_0-\hat{\beta}_n)}{\tau_0w_i(\btheta_0)}\right)^{-k-\frac{a}{\hat{\xi}_n}}-1\right.&\left.-\left(k+\frac{a}{\hat{\xi}_n}\right)\cdot \frac{\xi_0(\beta_0-\hat{\beta}_n)}{\tau_0w_i(\btheta_0)}\right|\leq\\
    &\frac{\left(k+\frac{a}{\hat{\xi}_n}\right)\left(k+\frac{a}{\hat{\xi}_n}+1\right)(1-\eta)^{-k-\frac{a}{\hat{\xi}_n}-2}}{2}\cdot\frac{\xi_0^2(\beta_0-\hat{\beta}_n)^2}{\tau_0^2w_i^2(\btheta_0)}.
\end{split}
\end{equation*}
\end{small}
Multiplying every term by $w^{-k-\frac{a}{\hat{\xi}_n}}_i(\btheta_0)|\log(\hat{\btheta}_n)|^b$ and summing over $i$,
\begin{small}
\begin{equation}\label{maclaurin_self}
    \begin{split}
        \frac{1}{n}\sumN  \left[w_i(\btheta_0)+\vphantom{\frac{\xi_0(\beta_0-\hat{\beta}_n)}{\tau_0}}\right.&\left. \frac{\xi_0(\beta_0-\hat{\beta}_n)}{\tau_0}\right]^{-k-\frac{a}{\hat{\xi}_n}}\log^b(\hat{\btheta}_n)=\frac{1}{n}\sumN w_i^{-k-\frac{a}{\hat{\xi}_n}}(\btheta_0)\log^b(\hat{\btheta}_n)-\\
        &\left(k+\frac{1}{\hat{\xi}_n}\right)\frac{\xi_0(\beta_0-\hat{\beta}_n)}{\tau_0}\cdot\frac{1}{n}\sumN w_i^{-k-1-\frac{a}{\hat{\xi}_n}}(\btheta_0)\log^b(\hat{\btheta}_n)+R_2(\hat{\btheta}_n),
    \end{split}
\end{equation}
\end{small}
in which the left-hand side is also equal to $\left(\frac{\hat{\xi}_n\tau_0}{\hat{\tau}_n\xi_0}\right)^{k+\frac{a}{\hat{\xi}_n}}\cdot\frac{1}{n}\sumN w_i^{-k-\frac{a}{\hat{\xi}_n}}(\hat{\btheta}_n)\log^b(\hat{\btheta}_n)$, and
\begin{small}
\begin{equation*}
    \begin{split}
        |R_2(\hat{\btheta}_n)|\leq & \frac{\xi^2_0\left(k+\frac{1}{\hat{\xi}_n}\right)\left(k+1+\frac{1}{\hat{\xi}_n}\right)(1-\eta)^{-k-\frac{a}{\hat{\xi}_n}-2}(\beta_0-\hat{\beta}_n)^2}{2\tau^2_0}\times\\
        &\hspace{1cm}\frac{1}{n}\sumN w_i^{-\left(k+2+\frac{1}{\hat{\xi}_n}\right)}(\btheta_0)|\log(\hat{\btheta}_n)|^b.
    \end{split}
\end{equation*}
\end{small}

Using arguments similar to that leading to $R_1(\hat{\btheta}_n)\asconv 0$ yields $R_2(\hat{\btheta}_n)\asconv 0$. Simplifying terms in \eqref{maclaurin_self}, we conclude
\begin{equation*}
    \frac{1}{n}\sumN w_i^{-k-\frac{a}{\hat{\xi}_n}}(\hat{\btheta}_n)\log^b(\hat{\btheta}_n)  \sim \left(\frac{\hat{\xi}_n\tau_0}{\hat{\tau}_n\xi_0}\right)^{-k-\frac{a}{\hat{\xi}_n}}\cdot \frac{1}{n}\sumN w_i^{-\left(k+\frac{1}{\hat{\xi}_n}\right)}(\btheta_0)\log^b(\hat{\btheta}_n)\hspace{12pt} a.s.,
\end{equation*}
the right-hand side of which converges almost surely to $(-\xi_0)^b\Gamma^{(b)}(k\xi_0+a+1)$ by \eqref{log_theta0_theta}. This ends the proof of Proposition \ref{prop:pseudo_SLLN} for $\xi_0>0$.

\textbf{(B) Case $\xi_0<0$.} Since $k\xi_0+a+1>0$, $k+\frac{a}{\xi_0}<-\frac{1}{\xi_0}$, which means $k+\frac{a}{\xi_0}$ can be positive. Therefore we need to approximate $x^t$ for $t>0$ in $(1-\eta,1+\eta)$ by the polynomial terms of $(x-1)$---that is, to establish different versions of \eqref{x_power_a}. When $1\leq t <2$, \eqref{x_power_a} holds. When $0<t<1$,
\begin{equation*}
    1+t(x-1)+\frac{t(t-1)}{2}(1-\eta)^{t-2}(x-1)^2\leq x^t\leq 1+t(x-1),
\end{equation*} 
and when $t\geq 2$,
\begin{equation*}
    1+t(x-1)\leq x^t\leq 1+t(x-1)+\frac{t(t-1)}{2}(1+\eta)^{t-2}(x-1)^2.
\end{equation*}
On the other hand, we know by Corollary \ref{rate_comp} $\frac{\hat{\beta}_n-\beta_0}{\beta_0-Y_{(n)}}\asconv 0$. Hence there almost surely exists $N$ such that for all $n>N$,
\begin{equation*}
    1/{\hat{\xi}_n}<0,\text{ and } \left|\frac{\xi_0(\beta_0-\hat{\beta}_n)}{\tau_0w_i(\btheta_0)}\right|=\left|\frac{\beta_0-\hat{\beta}_n}{Y_i-\beta_0}\right|\stackrel{\beta_0>Y_i}{\leq} \frac{|\beta_0-\hat{\beta}_n|}{\beta_0-Y_{(n)}}<\eta,
\end{equation*}
where $\eta\in (0,1)$ is fixed.

To prove Proposition \ref{prop:pseudo_SLLN} for $\xi_0<0$, apply \eqref{abs_distance_nb} and the varieties of \eqref{x_power_a} with $x=1+\frac{\xi_0(\beta_0-\hat{\beta}_n)}{\tau_0w_i(\btheta_0)}$, and follow the same steps as proving the case $\xi_0>0$.
\end{proof}

\subsection{Proof of Proposition \ref{prop:uniform_consistency}}
\label{proof:uniform_consistency}
\begin{proof}
To prove $\sup_{\alpha\in I}\left|\Phi_n(\alpha)-\Phi(\alpha)\right|\asconv 0$, it suffices to prove that for any fixed $\epsilon>0$, there almost surely exists $N>0$ such that for all $n>N$,
\begin{equation*}
    \sup_{\alpha\in I}\left|\Phi_n(\alpha)-\Phi(\alpha)\right|<\epsilon.
\end{equation*}

We first consider the case where $\xi_0>0$. Since $\Phi(\alpha)=(-\xi_0)^b\Gamma^{(b)}(\alpha\xi_0+1)$ is uniformly continuous in $I=[m,M]$, we can find $\eta_1>0$ such that for any $\alpha_1,\alpha_2\in I$ with $|\alpha_1-\alpha_2|<\eta_1$, 
\begin{equation}\label{eqn:uni_cont}
    |\Phi(\alpha_1)-\Phi(\alpha_2)|<\frac{\epsilon}{3}.
\end{equation}
Define the interval $\tilde{I}=\left(\frac{m-1/\xi_0}{2},\frac{2M+m+1/\xi_0}{2}\right)$ and
\begin{equation*}
\begin{split}
    \Pi_2=&\xi_0^{b+1}\sup_{\alpha\in I}\Gamma^{(b+1)}(\alpha\xi_0+1)+\frac{3\xi_0^{b+1}}{(m\xi_0+1)^{b+2}}\Gamma(b+2),\\
    \Pi_3=&\xi_0^{b+2}\sup_{\alpha\in \tilde{I}}\Gamma^{(b+2)}(\alpha\xi_0+1)+\frac{3\xi_0^{b+2}}{[(m\xi_0+1)/2]^{b+3}}\Gamma(b+3).
\end{split}
\end{equation*}
Fix $\eta=\min\left\{\eta_1,\frac{\epsilon}{9\Pi_2},\sqrt{\frac{\epsilon}{9\Pi_3}},\frac{m+1/\xi_0}{2}\right\}$. Denote $B(\alpha,\eta)=(\alpha-\eta,\alpha+\eta)$. Note the set $I$ is compact and is covered by the intervals  $\{B(\alpha,\eta):\alpha\in I\}$. Let $B_i=B(\alpha_i,\eta)$, $ 1\leq i\leq p$, be a finite cover. Then
\begin{equation}\label{eqn:three_parts_origin}
    \begin{split}
    \sup_{\alpha\in I}|\Phi_n(\alpha)-&\Phi(\alpha)|\leq \max_{1\leq i\leq p}\sup_{\alpha\in B_i}\left|\Phi_n(\alpha)-\Phi(\alpha)\right|\\
    \leq& \max_{1\leq i\leq p}\sup_{\alpha\in B_i}\left|\Phi_n(\alpha)-\Phi_n(\alpha_i)\right|+\max_{1\leq i\leq p}\sup_{\alpha\in B_i}\left|\Phi(\alpha)-\Phi(\alpha_i)\right|+\\
    & \max_{1\leq i\leq p}\left|\Phi_n(\alpha_i)-\Phi(\alpha_i)\right|.
    \end{split}
\end{equation}

From \eqref{eqn:uni_cont}, we know
\begin{equation}\label{eqn:part1}
    \max_{1\leq i\leq p}\sup_{\alpha\in B_i}\left|\Phi(\alpha)-\Phi(\alpha_i)\right|<\frac{\epsilon}{3}.
\end{equation}
Since $p$ is a finite number, the pointwise consistency in Proposition \ref{prop:pseudo_SLLN} will ensure that there almost surely exists $N_1>0$ such that for all $n>N_1$,
\begin{equation}\label{eqn:part2}
    \max_{1\leq i\leq p}\left|\Phi_n(\alpha_i)-\Phi(\alpha_i)\right|<\frac{\epsilon}{3}.
\end{equation}

Now we examine the first term on the right-hand side of \eqref{eqn:three_parts_origin}. Since $\eta\leq \frac{m+1/\xi_0}{2}$, we have 
\begin{equation}\label{eqn:range_alpha}
    \begin{split}
    \alpha_i-\eta&\geq \frac{m-1/\xi_0}{2},\;\; \xi_0(\alpha_i-\eta)+1\geq \frac{m\xi_0+1}{2}>0,\\
    \alpha_i+\eta&\leq \frac{2M+m+1/\xi_0}{2},
    \end{split}
\end{equation}
for all $1\leq i \leq p$. From \eqref{a_power_x}, we deduce for $\alpha\in B_i$ that 
\begin{equation*}
\begin{split}
    \left|w_i^{-\alpha}(\hat{\btheta}_n)- w_i^{-\alpha_i}(\hat{\btheta}_n)\right|\leq& |\alpha-\alpha_i|\cdot |w_i^{-\alpha_i}(\hat{\btheta}_n)\log w_i(\hat{\btheta}_n)|+\\
    &(\alpha-\alpha_i)^2\cdot (w_i^{-\alpha_i+\eta}(\hat{\btheta}_n)+w_i^{-\alpha_i-\eta}(\hat{\btheta}_n))\log^2 w_i(\hat{\btheta}_n).\\
\end{split}
\end{equation*}
Therefore,
\begin{equation}\label{eqn:three_parts}
\begin{split}
    \left|\Phi_n(\alpha)-\Phi_n(\alpha_i)\right|=&\left|\frac{1}{n}\sumN \{w_i^{-\alpha}(\hat{\btheta}_n)- w_i^{-\alpha_i}(\hat{\btheta}_n)\}\log^b w_i(\hat{\btheta}_n)\right|\\
    \leq & \frac{|\alpha-\alpha_i|}{n}\sumN w_i^{-\alpha_i}(\hat{\btheta}_n)\left|\log w_i(\hat{\btheta}_n)\right|^{b+1}+\\
    & \frac{(\alpha-\alpha_i)^2}{n}\sumN w_i^{-\alpha_i+\eta}(\hat{\btheta}_n)\left|\log w_i(\hat{\btheta}_n)\right|^{b+2}+\\
    &\frac{(\alpha-\alpha_i)^2}{n}\sumN w_i^{-\alpha_i-\eta}(\hat{\btheta}_n)\left|\log w_i(\hat{\btheta}_n)\right|^{b+2}.
\end{split}
\end{equation}
Recall that Lemma \ref{lem:wi_abs_logwi} guarantees that there almost surely exists $N_2>0$ such that for all $n>N_2$ and $1\leq i\leq p$,
\begin{equation*}
    \frac{1}{n}\sumN w_i^{-\alpha_i}(\hat{\btheta}_n)\left|\log w_i(\hat{\btheta}_n)\right|^{b+1}<\xi_0^{b+1}\Gamma^{(b+1)}(\alpha_i\xi_0+1)+\frac{3\xi_0^{b+1}\Gamma(b+2)}{(\alpha_i\xi_0+1)^{b+2}}<\Pi_2.
\end{equation*}
Similarly, there almost surely exists $N_3>0$ such that for all $n>N_3$ and $1\leq i\leq p$,
\begin{small}
\begin{equation*}
    \begin{split}
        \frac{1}{n}\sumN w_i^{-\alpha_i+\eta}(\hat{\btheta}_n)\left|\log w_i(\hat{\btheta}_n)\right|^{b+2}&<\xi_0^{b+2}\Gamma^{(b+2)}(\xi_0(\alpha_i-\eta)+1)+\frac{3\xi_0^{b+2}\Gamma(b+3)}{[\xi_0(\alpha_i-\eta)+1]^{b+3}}\stackrel{\eqref{eqn:range_alpha}}{<}\Pi_3,\\
        \frac{1}{n}\sumN w_i^{-\alpha_i-\eta}(\hat{\btheta}_n)\left|\log w_i(\hat{\btheta}_n)\right|^{b+2}&<\xi_0^{b+2}\Gamma^{(b+2)}(\xi_0(\alpha_i+\eta)+1)+\frac{3\xi_0^{b+2}\Gamma(b+3)}{[\xi_0(\alpha_i+\eta)+1]^{b+3}}\stackrel{\eqref{eqn:range_alpha}}{<}\Pi_3.
    \end{split}
\end{equation*}
\end{small}
Therefore, the bound in \eqref{eqn:three_parts} can be relaxed as
\begin{equation*}
    \left|\Phi_n(\alpha)-\Phi_n(\alpha_i)\right|
    < |\alpha-\alpha_i|\Pi_2+2(\alpha-\alpha_i)^2\Pi_3,
\end{equation*}
and thus for all $1\leq i \leq p$,
\begin{equation}\label{eqn:part3}
    \sup_{\alpha\in B_i}\left|\Phi_n(\alpha)-\Phi_n(\alpha_i)\right|
    < \eta\Pi_2+2\eta^2\Pi_3<\frac{\epsilon}{9}+\frac{2\epsilon}{9}=\frac{\epsilon}{3}.
\end{equation}
The last inequality stems from $\eta<\frac{\epsilon}{9\Pi_2}$ and $\eta<\sqrt{\frac{\epsilon}{9\Pi_3}}$.

To sum up, we plug \eqref{eqn:part1}, \eqref{eqn:part2} and \eqref{eqn:part3} back into \eqref{eqn:three_parts_origin}. We conclude that there almost surely exists $N=\max\{N_1,N_2,N_3\}$ such that for all $n>N$,
\begin{equation*}
    \sup_{\alpha\in I}|\Phi_n(\alpha)-\Phi(\alpha)|<\epsilon,
\end{equation*}
which completes the proof of this proposition for $\xi_0>0$. For the case $\xi_0<0$, the proof is analogous.
\end{proof}


\section{Proofs concerning the local concavity}\label{proof:loc_concavity}
\subsection{Proof of Proposition \ref{prop:hessian}}
\begin{proof}
Let $B_r(\btheta)=\{\btheta'\in\Theta: ||\btheta'-\btheta||_{\infty}<r\}$, where $||\cdot||_{\infty}$ is the maximum norm. For the conciseness of the appendices, we only provide proof for the case when the shape parameter $\xi_0<0$. The proof of the case $\xi_0>0$ is much easier because $k+\frac{1}{\xi_0}$ is always positive when $k$ is a positive integer.

If $\xi_0\in (-1/2,0)$, the proof requires more careful treatment. We first find an integer $K_0\geq 2$  such that $-1/K_0<\xi_0\leq -1/(K_0+1)$. We further select $\epsilon_0>0$ to tighten the left bound such that $-1/(K_0+\epsilon_0)<\xi_0\leq -1/(K_0+1)$. Find $r>0$ small enough such that 
\begin{equation}\label{range_xi_supp}
    -\frac{1}{K_0+\epsilon_0}<\xi_0-r<\xi_0+r<-\frac{1}{K_0+2},
\end{equation}
which leads to $\xi_0/(\xi_0-r)>-\xi_0(K_0+\epsilon_0)$. Therefore, for any integer $k\in\{0,1,\ldots,K_0+1\}$,
\begin{equation}\label{range_xi_supp1}
\begin{split}
    k\xi_0+\frac{\xi_0}{\xi_0-r}+1&\geq (K_0+1)\xi_0+\frac{\xi_0}{\xi_0-r}+1\\
    &>(K_0+1)\xi_0-\xi_0(K_0+\epsilon_0)+1=\xi_0(1-\epsilon_0)+1>0,
\end{split}
\end{equation}
where the last inequality holds because $\xi_0\in (-1,0)$.

Meanwhile, for any $\xi\in (\xi_0-r,\xi_0+r)$,
\begin{equation}\label{range_xi_supp2}
    K_0+\epsilon_0<-\frac{1}{\xi_0-r}<-\frac{1}{\xi}<-\frac{1}{\xi_0+r}
\end{equation}

To approximate the Hessian in $B_r(\btheta)$ and prove Proposition \ref{prop:hessian}, we need to check a few things:
\begin{enumerate}[label=\textbf{(\roman*)},ref=(\roman*)]
    \item\label{cond1} We calculate the Hessian matrix of the log-likelihood function $L_n(\btheta)$, and show that each element of the matrix is a linear combination of a few different types of sums: $\sumN w_i^{-k-\frac{1}{\xi}}(\btheta)$, $\sumN w_i^{-k-\frac{1}{\xi}}(\btheta)\log^b w_i(\btheta)$, where $k$ and $b$ are integers. This is verified in Lemma \ref{hessian_elements}.
    \item\label{cond2} For small $r>0$ that satisfies \eqref{range_xi_supp} and $r<\min\left\{\frac{\tau_0}{3},\frac{4(\xi_0-\tau_0)}{\xi_0\tau_0}\right\}$, we prove that these sums divided by $n$ are uniformly bounded for large sample size $n$ in the neighborhood $B_{r}(\hat{\btheta}_n)$ due to the pseudo-law of large numbers (Proposition \ref{prop:pseudo_SLLN}). The bounds only depend on $\btheta_0$ and $r$. This is established in Lemma \ref{shrink_neighborhood} and \ref{shrink_neighborhood2}.
    \item\label{cond3} Utilizing the convexity of the set $B_{r}(\hat{\btheta}_n)\cap \Omega_n$ and the mean value theorem, we prove the sums are Lipschitz continuous:
    $$\left|\sumN w_i^{-k-\frac{1}{\xi}}(\btheta)\log^b w_i(\btheta)-\sumN w_i^{-k-\frac{1}{\hat{\xi}_n}}(\hat{\btheta}_n)\log^b w_i(\hat{\btheta}_n)\right|<nM||\btheta-\hat{\btheta}_n||_{\infty},$$
    where $M$ is positive constant that is defined by $\btheta_0$ and the radius $r$. This is assured by Proposition \ref{lem:lipschitz_cont}.
\end{enumerate}
With the Lipschitz continuity, we can easily show in the neighborhood $B_{r}(\hat{\btheta}_n)\cap \Omega_n$,
    \begin{equation*}
    \boldsymbol{I}-\boldsymbol{A}_0(r)\leq L''_n(\btheta)\{L''_n(\hat{\btheta}_n)\}^{-1}\leq \boldsymbol{I}+\boldsymbol{A}_0(r),
    \end{equation*}
    where $\boldsymbol{I}$ is the $3\times 3$ identity matrix and $\boldsymbol{A}_0(r)$ is a symmetric positive-semidefinite matrix which only depends on $\btheta_0$ and the radius $r$, and whose largest eigenvalue tends to zero as $r\rightarrow 0$.
\end{proof}

\subsection{Proofs of \ref{cond1} - \ref{cond3}}
In this subsection, we will utilize the classic $c_r$ inequality which states that for any $a,b\geq 0$,
\begin{equation}\label{Cr_ineq}
    (a+b)^r\leq C_r(a^r+b^r),
\end{equation}
where $C_r=1$ if $0<r\leq 1$, and $C_r=2^{r-1}$ if $r>1$.

\begin{lemma}[Hessian matrix]\label{hessian_elements}
Given i.i.d random variables $Y_1, Y_2, \ldots$ with common distribution $P_{\btheta_0}$, the elements in the Hessian matrix, $L''_n(\btheta)$, can each be expressed as a linear combination of the following terms
\begin{equation*}
\begin{split}
    \sumN w_i^{-k}(\btheta),\;\sumN& w_i^{-k-\frac{1}{\xi}}(\btheta),\;\sumN \log w_i(\btheta),\\
    \sumN w_i^{-k'-\frac{1}{\xi}}(\btheta)\log w_i&(\btheta),\;\;\sumN w_i^{-\frac{1}{\xi}}(\btheta)(\log w_i(\btheta))^2,
\end{split}
\end{equation*}
where $k=0,1,2$, $k'=0,1$ and $\xi\neq 0$.
\end{lemma}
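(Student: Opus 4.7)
The plan is a direct, bookkeeping-style computation: differentiate $L_n$ in the form \eqref{Log-lik} twice, and show that every resulting summand collapses into the finite basis listed in the statement. The key enabling observation is that the elementary partial derivatives of $w_i(\btheta)=1+\xi(Y_i-\mu)/\tau$ are themselves polynomials in $w_i$ divided by monomials in $(\tau,\xi)$:
\begin{equation*}
    \frac{\partial w_i}{\partial \tau}=-\frac{w_i-1}{\tau},\qquad \frac{\partial w_i}{\partial \mu}=-\frac{\xi}{\tau},\qquad \frac{\partial w_i}{\partial \xi}=\frac{w_i-1}{\xi}.
\end{equation*}
Consequently, iterated differentiation can at most shift the power of $w_i$ by one (from the chain rule applied to $\log w_i$ or $w_i^{-1/\xi}$) and, when differentiating in $\xi$, introduce one extra factor of $\log w_i$ and a factor of $1/\xi^2$. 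No other data-dependent functional forms can appear.

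I would treat the three summands of $L_n$ separately. The constant part $-n\log\tau$ contributes only polynomials in $(\tau,\xi)$ to the Hessian. The middle term $-\frac{\xi+1}{\xi}\sumN\log w_i(\btheta)$ produces, upon two applications of $\partial_\tau$ and $\partial_\mu$, linear combinations of $\sumN w_i^{-k}$ for $k=0,1,2$, while any $\partial_\xi$ leaves a copy of $\sumN\log w_i$ with a $1/\xi^2$ prefactor. The most delicate piece is $-\sumN w_i^{-1/\xi}$: writing $w_i^{-1/\xi}=\exp(-\log w_i/\xi)$ shows that a single $\partial_\xi$ returns $w_i^{-1/\xi}$ multiplied by a linear combination of $\log w_i/\xi^2$ and $(w_i-1)w_i^{-1}/\xi$; hence $\partial^2_{\xi\xi}$ can produce $w_i^{-1/\xi}\log^2 w_i$ (but no higher logarithmic powers), and the mixed derivatives $\partial^2_{\tau\xi}$, $\partial^2_{\mu\xi}$ produce at most one factor of $\log w_i$ together with a shifted power $w_i^{-k-1/\xi}$ for $k\in\{0,1\}$.

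Once the chain rule has been applied, the final step is purely algebraic: expand each product of the form $w_i^{-k-1/\xi}(w_i-1)^m$ via the binomial theorem to rewrite it as an integer linear combination of $w_i^{-j-1/\xi}$ for $j=k-m,\dots,k$, and similarly for the pure-power terms. Collecting all contributions, every Hessian entry becomes a finite $(\tau,\mu,\xi)$-dependent linear combination of the basis sums listed, with the claimed bounds $k\in\{0,1,2\}$, $k'\in\{0,1\}$, and at most two logarithmic factors on $w_i^{-1/\xi}$. The only obstacle is bookkeeping—making sure that the $\xi$-derivatives of $w_i^{-1/\xi}$ and of the prefactor $(\xi+1)/\xi$ are consistently combined—so I would organize the computation in a short table tracking how each of the three first-order summands $\partial_\tau L_n$, $\partial_\mu L_n$, $\partial_\xi L_n$ behaves under each subsequent partial derivative; the verification then amounts to checking six entries.
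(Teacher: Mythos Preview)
Your proposal is correct and takes essentially the same approach as the paper: a direct computation of the six Hessian entries, relying on the fact that $\partial_\tau w_i$, $\partial_\mu w_i$, $\partial_\xi w_i$ are themselves affine in $w_i$ (divided by $\tau$ or $\xi$), so that repeated differentiation stays within the listed basis. The paper simply records the explicit final formulas for each $\partial^2 L_n/\partial\theta_j\partial\theta_k$ (omitting the intermediate calculus), whereas you give the structural reason the calculation closes; both amount to the same elementary verification.
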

{\it Proof. }
Given i.i.d observations $Y_1,\ldots,Y_n$, the elements in the Hessian matrix, $\partial^2 L_n/\partial\btheta\partial\btheta^T$, can be obtained as follows (the detailed calculations are omitted):
\begin{footnotesize}
\begin{equation*}
    \begin{split}
        \evalat[\Big]{\frac{\partial^2 L_n}{\partial \mu^2}}{\btheta}=&\frac{(1+\xi)\xi}{\tau^2}\sumN w_i^{-2}(\btheta)-\frac{(1+\xi)}{\tau^2}\sumN w_i^{-2-\frac{1}{\xi}}(\btheta),\\
        \evalat[\Big]{\frac{\partial^2 L_n}{\partial \mu \partial \xi}}{\btheta}=&-\frac{1}{\xi\tau}\sumN w_i^{-1}(\btheta)+\frac{\xi+1}{\tau\xi^2}\sumN w_i^{-1-\frac{1}{\xi}}(\btheta)+\frac{\xi+1}{\tau\xi}\sumN w_i^{-2}(\btheta)\\
        &-\left(\frac{1}{\tau\xi^2}+\frac{1}{\tau\xi}\right)\sumN w_i^{-2-\frac{1}{\xi}}(\btheta)-\frac{1}{\xi^2\tau}\sumN w_i^{-1-\frac{1}{\xi}}(\btheta)\log w_i(\btheta),\\
        \evalat[\Big]{\frac{\partial^2 L_n}{\partial \mu \partial \tau}}{\btheta}=&-\frac{1}{\tau^2\xi}\sumN w_i^{-1-\frac{1}{\xi}}(\btheta)-\frac{(\xi+1)}{\tau^2}\sumN w_i^{-2}(\btheta)+\frac{\xi+1}{\tau^2\xi}\sumN w_i^{-2-\frac{1}{\xi}}(\btheta),\\
        \evalat[\Big]{\frac{\partial^2 L_n}{\partial \tau^2}}{\btheta}=&-\frac{n\xi}{\xi^2\tau^2}+\frac{(\xi-1)}{\xi^2\tau^2}\sumN w_i^{-\frac{1}{\xi}}(\btheta)+\frac{2}{\xi^2\tau^2}\sumN w_i^{-1-\frac{1}{\xi}}(\btheta)+\frac{\xi(\xi+1)}{\xi^2\tau^2}\sumN w_i^{-2}(\btheta)\\
        &-\frac{\xi+1}{\xi^2\tau^2}\sumN w_i^{-2-\frac{1}{\xi}}(\btheta),\\
        \evalat[\Big]{\frac{\partial^2 L_n}{\partial \tau \partial \xi}}{\btheta}=&\frac{1}{\tau\xi^2}\left[-n+\frac{\xi+1}{\xi}\sumN w_i^{-\frac{1}{\xi}}(\btheta)+(2+\xi)\sumN w_i^{-1}(\btheta)-\frac{2(\xi+1)}{\xi}\sumN w_i^{-1-\frac{1}{\xi}}(\btheta)\right.\\
        &-(\xi+1)\sumN w_i^{-2}(\btheta)+\frac{\xi+1}{\xi}\sumN w_i^{-2-\frac{1}{\xi}}(\btheta)-\frac{1}{\xi}\sumN w_i^{-\frac{1}{\xi}}(\btheta)\log w_i(\btheta)\\
        &\left.+\frac{1}{\xi}\sumN w_i^{-1-\frac{1}{\xi}}(\btheta)\log w_i(\btheta)\right],\\
        \evalat[\Big]{\frac{\partial^2 L_n}{\partial \xi^2}}{\btheta}=&\frac{n(\xi+3)}{\xi^3}-\frac{3\xi+1}{\xi^4}\sumN w_i^{-\frac{1}{\xi}}(\btheta)-\frac{2(\xi+2)}{\xi^3}\sumN w_i^{-1}(\btheta)+\frac{2(2\xi+1)}{\xi^4}\sumN w_i^{-1-\frac{1}{\xi}}(\btheta)\\
        &+\frac{(\xi+1)}{\xi^3}\sumN w_i^{-2}(\btheta)-\frac{(\xi+1)}{\xi^4}\sumN w_i^{-2-\frac{1}{\xi}}(\btheta)-\frac{2}{\xi^3}\sumN \log w_i(\btheta)\\
        &+\frac{2(\xi+1)}{\xi^4}\sumN w_i^{-\frac{1}{\xi}}(\btheta)\log w_i(\btheta)-\frac{2}{\xi^4}\sumN w_i^{-1-\frac{1}{\xi}}(\btheta)\log w_i(\btheta)-\frac{1}{\xi^4}\sumN w_i^{-\frac{1}{\xi}}(\btheta)(\log w_i(\btheta))^2.
    \end{split}
\end{equation*}
\end{footnotesize}
\hfill\(\Box\)

\vskip 0.5cm
\newcommand\Nsupp{1}
\begin{lemma}[Uniform bound]\label{shrink_neighborhood}
Suppose a set of true GEV parameters $\btheta_0$ satisfies $-1/K_0<\xi_0\leq -1/(K_0+1)$, where $K_0\geq 1$ is some integer. Fix any $r>0$ small enough such that both \eqref{range_xi_supp} and $r<\frac{\tau_0}{3}$ hold. Then there almost surely exists $N_{\Nsupp}>0$ such that for any $n>N_{\Nsupp}$,
\begin{equation*}
\begin{split}
     \sumN w_i^{-k-\frac{1}{\xi}}(\btheta)\leq n \phi_k(\btheta_0,r), \text{ for }\btheta\in B_{r}(\hat{\btheta}_n)\cap \Omega_n,
\end{split}
\end{equation*}
in which $k=0,\ldots, K_0+1$, and $\phi_k(\btheta_0,r)>0$ is determined by $k$, $\btheta_0$ and the radius $r$.
\end{lemma}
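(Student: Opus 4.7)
The plan is to reduce the uniform-over-$\btheta$ bound to a uniform-over-exponents bound at $\hat\btheta_n$, which is supplied by Proposition~\ref{prop:uniform_consistency}. The algebraic backbone is the identity
$$w_i(\btheta)=A\,w_i(\hat\btheta_n)+B,\quad A:=\frac{\xi\hat\tau_n}{\hat\xi_n\tau},\quad B:=-\frac{\xi(\beta-\hat\beta_n)}{\tau},$$
obtained from $\xi(Y_i-\beta)=\xi(Y_i-\hat\beta_n)-\xi(\beta-\hat\beta_n)$. Using strong consistency of $\hat\btheta_n$, for $\btheta\in B_r(\hat\btheta_n)$ and all sufficiently large $n$ one can fix constants $A_-,A_+>0$ and $D>0$ depending only on $\btheta_0$ and $r$, with $D\to 0$ as $r\to 0$, such that $A\in[A_-,A_+]$ and $|B|\le D$; the assumptions $r<\tau_0/3$ and \eqref{range_xi_supp} keep $\tau$ and $\xi$ bounded away from zero. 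The exponent $\alpha:=-k-1/\xi$ then ranges over a fixed compact interval, equal to $(K_0+\epsilon_0-k,\,K_0+2-k)$ by \eqref{range_xi_supp2}; it is strictly positive when $k\le K_0$ but may take values in $(\epsilon_0-1,1)$ for $k=K_0+1$.

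In the case $\alpha>0$, combining $w_i(\btheta)\le A_+ w_i(\hat\btheta_n)+D$ with the $c_r$ inequality \eqref{Cr_ineq} gives
$$w_i(\btheta)^{\alpha}\le C_{\alpha}\bigl(A_+^{\alpha}w_i(\hat\btheta_n)^{\alpha}+D^{\alpha}\bigr),$$
and the residual $\xi$-dependence of the exponent is absorbed by $w^{\alpha}\le w^{\alpha_-}+w^{\alpha_+}$ for $w>0$ and $\alpha\in[\alpha_-,\alpha_+]$, reducing the task to bounding $\tfrac{1}{n}\sum_i w_i(\hat\btheta_n)^{\alpha'}$ at the two endpoint exponents $\alpha'=-k-1/(\xi_0\pm r)$. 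Condition \eqref{range_xi_supp1} is precisely calibrated so that these endpoint exponents satisfy $\alpha'\xi_0+1>0$, which is the integrability hypothesis of Proposition~\ref{prop:uniform_consistency}; that proposition, applied with $b=0$ on an interval $[-M,-m]$ containing the entire range of admissible $\alpha'$ (possible since $|\xi_0|<1$ gives $1-\epsilon_0<1/|\xi_0|$), delivers a uniform almost-sure bound for every such $\alpha'$ simultaneously.

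The $k=K_0+1$ case is more delicate because $\alpha$ can be non-positive, as small as $\epsilon_0-1$. I would split on the sign of $B$: when $B\ge 0$, $w_i(\btheta)\ge A_- w_i(\hat\btheta_n)$ yields $w_i(\btheta)^{\alpha}\le A_-^{\alpha}w_i(\hat\btheta_n)^{\alpha}$, and the case closes via the uniform-consistency argument above. When $B<0$, membership $\btheta\in\Omega_n$ forces $w_i(\hat\btheta_n)>|B|/A$ for every $i$; splitting further according to whether $w_i(\hat\btheta_n)\ge 2|B|/A_-$, the first subgroup satisfies $w_i(\btheta)\ge A_- w_i(\hat\btheta_n)/2$ and is again controlled by uniform consistency. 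The principal technical obstacle is the remaining subgroup $I_2=\{i:\,|B|/A<w_i(\hat\btheta_n)<2|B|/A_-\}$, where $w_i(\btheta)$ can be arbitrarily close to zero; the key observation is that $\alpha>\epsilon_0-1>-1$, so $u\mapsto u^{\alpha}$ is integrable near $0$, and a \emph{uniform-in-$c$} strong law of large numbers for $\tfrac{1}{n}\sum_i (w_i(\hat\btheta_n)-c)^{\alpha}\,\mathbf{1}\{w_i(\hat\btheta_n)\in(c,2c)\}$ with $c$ ranging in $[0,D/A_-]$ delivers the required $O(1)$ bound. Establishing this uniform-in-$c$ law---by transporting a corresponding expectation bound from $\btheta_0$ to $\hat\btheta_n$ in the spirit of Propositions~\ref{prop:pseudo_SLLN} and \ref{prop:uniform_consistency}, using the support-boundary rates from Proposition~\ref{prop:rate_Min}---is where the real work lies.
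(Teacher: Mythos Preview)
Your overall strategy---the affine identity $w_i(\btheta)=A\,w_i(\hat\btheta_n)+B$ followed by the $c_r$-inequality and a law of large numbers at $\hat\btheta_n$---is exactly the paper's. The main organisational difference is the order of operations. You apply $c_r$ first and then absorb the $\xi$-dependence of the exponent via $w^\alpha\le w^{\alpha_-}+w^{\alpha_+}$; the paper instead uses the power-mean inequality to replace the moving exponent $-k-1/\xi$ by a single fixed exponent $\eta_k=-k-1/(\xi_0+r)$ \emph{before} invoking $c_r$ and the pseudo law of large numbers. The power-mean step buys a small economy: once the exponent is fixed there are only finitely many sums to control, so the pointwise pseudo-SLLN (Proposition~\ref{prop:pseudo_SLLN}) suffices and there is no need to appeal to uniform consistency (Proposition~\ref{prop:uniform_consistency}). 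Your bracketing trick works too, but is slightly heavier machinery for the same end.

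For $k=K_0+1$ you have put your finger on a genuine delicacy. When $\alpha<0$ and $B<0$, the $c_r$-inequality no longer gives an upper bound on $(Aw_i(\hat\btheta_n)+B)^\alpha$, because the base can be arbitrarily close to zero for those $i$ with $w_i(\hat\btheta_n)$ just above $|B|/A$. The paper handles this case tersely: after a power-mean reduction to a fixed exponent on either side of zero, it says to ``follow a similar proof to the previous case with the help of the classic $c_r$-inequality,'' which is not obviously justified when the fixed exponent is negative and $B<0$. So your suspicion is well placed. Your proposed remedy---a uniform-in-$c$ (and, in fact, uniform-in-$\alpha$) law of large numbers for the truncated sums on $I_2$---is plausible, and the observation that $\alpha>\epsilon_0-1>-1$ makes $u\mapsto u^\alpha$ integrable near zero is exactly the ingredient that would make the expectations finite. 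But as you say, that uniform law is where the real work lies, and you have not carried it out; turning the sketch into a proof would require an empirical-process argument (bracketing or monotone-class) beyond what the paper's toolbox provides. In short: for the positive-exponent range your argument is essentially the paper's with a different bookkeeping; for the negative-exponent sub-case of $k=K_0+1$ you have correctly diagnosed a gap that the paper glosses over, but your fix remains at the level of a programme rather than a proof.
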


{\it Proof. }
\textbf{A. Case $k=0,\ldots, K_0$.} When $\btheta\in B_{r}(\hat{\btheta}_n)$, we know from \eqref{range_xi_supp2} that $$0>k-(K_0+\epsilon_0)>k+\frac{1}{\xi}>k+\frac{1}{\xi_0+r}.$$
By the monotonicity of the power means,
\begin{equation}\label{holder_try}
   \frac{1}{n}\sumN w_i^{-k-\frac{1}{\xi}}(\btheta)\leq \left(\frac{1}{n}\sumN w_i^{-k-\frac{1}{\xi_0+r}}(\btheta)\right)^{\frac{k+1/\xi}{k+1/(\xi_0+r)}},
\end{equation}

To prove the right-hand side of \eqref{holder_try} is uniformly bounded, we first define three positive constants 
\begin{equation}\label{concavity_const_def}
    \eta_k=-k-\frac{1}{\xi_0+r},\text{ }\Psi_0=-\frac{2\xi_0}{\tau_0}, \text{ and }\Lambda_0=\frac{5(\xi_0-\tau_0)}{\xi_0\tau_0}.
\end{equation}
Apply the classic $c_r$-inequality \eqref{Cr_ineq} to $\eta_k$, and we obtain
\begin{equation}\label{after_c_r}
\begin{split}
     \sumN w_i^{\eta_k}(\btheta)&= \left(\frac{\xi\hat{\tau}_n}{\tau\hat{\xi}_n}\right)^{\eta_k}\sumN \left(w_i(\hat{\btheta}_n)+\frac{\hat{\xi}_n}{\hat{\tau}_n}(\hat{\beta}_n-\beta)\right)^{\eta_k}\\
     &\leq C_{\eta_k}\left(\frac{\xi\hat{\tau}_n}{\tau\hat{\xi}_n}\right)^{\eta_k}\left(\sumN w_i^{\eta_k}(\hat{\btheta}_n)+n\left|\frac{\hat{\xi}_n}{\hat{\tau}_n}(\hat{\beta}_n-\beta)\right|^{\eta_k}\right).
\end{split}
\end{equation}
By the strong consistency of $\hat{\btheta}_n$ and the fact that $r<\tau_0/3$, there almost surely exists $N_{\Nsupp,1}>0$ such that
\begin{equation}\label{abs_distance_nb0}
  \hat{\xi}_n<0,\;r<\hat{\tau}_n/2,\;\left|\frac{\hat{\xi}_n}{\hat{\tau}_n}(\hat{\beta}_n-\beta)\right|<-\frac{2\xi_0}{\tau_0}r=\Psi_0 r,
\end{equation}
for all $n>N_{\Nsupp,1}$ and $\btheta\in B_{r}(\hat{\btheta}_n)$. Thereupon it can be shown via a convexity argument similar to that yielding Lemma 2.1 that
\begin{equation*}
    \forall \btheta\in B_{r}(\hat{\btheta}_n),\; 0<\frac{\xi\hat{\tau}_n}{\tau\hat{\xi}_n}\stackrel{\hat{\xi}_n<0}{\leq} \frac{(\hat{\xi}_n-r)\hat{\tau}_n}{(\hat{\tau}_n-r)\hat{\xi}_n}<1+\frac{4(\hat{\xi}_n-\hat{\tau}_n)}{\hat{\xi}_n\hat{\tau}_n} r.
\end{equation*}
Then we can find $N_{\Nsupp,2}>0$ such that for all $n>N_{\Nsupp,2}$,
\begin{equation}\label{abs_distance_nb1}
    \forall \btheta\in B_{r}(\hat{\btheta}_n),\; 0<\frac{\xi\hat{\tau}_n}{\tau\hat{\xi}_n}<1+\Lambda_0 r.
\end{equation}
\indent Also, \eqref{range_xi_supp1} guarantees $\frac{1}{n}\sumN w_i^{-k-\frac{1}{\xi_0-r}}(\hat{\btheta}_n)\asconv \Gamma(k\xi_0+\frac{\xi_0}{\xi_0-r}+1)$, there almost surely exists $N_{\Nsupp,3}>0$ such that for any $n>N_{\Nsupp,3}$, $\frac{1}{n}\sumN w_i^{-k-\frac{1}{\xi_0-r}}(\hat{\btheta}_n)\leq \Gamma(k\xi_0+\frac{\xi_0}{\xi_0-r}+1)+1$, where $k=1,\ldots, K_0$.
\vskip 0.2cm
Consequently, there almost surely exists $N_{\Nsupp}=\max\{N_{\Nsupp,1},N_{\Nsupp,2},N_{\Nsupp,3}\}$ such that for any $n>N_{\Nsupp}$, \eqref{after_c_r} can be further bounded as follows:
\begin{equation*}
        \sumN w_i^{\eta_k}(\btheta)
     \leq nC_{\eta_k}\left(1+\Lambda_0 r\right)^{\eta_k}\left(\Gamma(k\xi_0+\frac{\xi_0}{\xi_0-r}+1)+1+(\Psi_0 r)^{\eta_k}\right).
\end{equation*}
Plugging this result back into \eqref{holder_try} while noticing that $0<\frac{k+1/\xi}{k+1/(\xi_0-r)}<1$, we have
\begin{footnotesize}
\begin{equation*}
    \begin{split}
        \frac{1}{n}\sumN w_i^{-k-\frac{1}{\xi}}&(\btheta)\leq  C^{\frac{k+1/\xi}{k+1/(\xi_0-r)}}_{\eta_k}\left(1+\Lambda_0r\right)^{-k-\frac{1}{\xi}}\left(\Gamma(k\xi_0+\frac{\xi_0}{\xi_0-r}+1)+1+(\Psi_0 r)^{-k-\frac{1}{\xi_0-r}}\right)^{\frac{k+1/\xi}{k+1/(\xi_0-r)}}\\
        &\stackrel{\eqref{range_xi_supp2}}{\leq}  C_{\eta_k}\left(1+\Lambda_0r\right)^{-k-\frac{1}{\xi_0+r}}\left(\Gamma(k\xi_0+\frac{\xi_0}{\xi_0-r}+1)+1+(\Psi_0 r)^{-k-\frac{1}{\xi_0-r}}\right)=:\phi_k(\btheta_0,r).
    \end{split}
\end{equation*}
\end{footnotesize}

\textbf{B. Case $k=K_0+1$.} For $\btheta\in B_{r}(\hat{\btheta}_n)$, \eqref{range_xi_supp2} ensures
$$K_0+1+\frac{1}{\xi_0-r}>K_0+1+\frac{1}{\xi}>K_0+1+\frac{1}{\xi_0+r},$$
and \eqref{range_xi_supp} ensures
$$K_0+1+\frac{1}{\xi_0-r}>K_0+1+\frac{1}{\xi_0}\geq 0.$$

If $K_0+1+\frac{1}{\xi}>0$, the monotonicity of the power means yields
$$\frac{1}{n}\sumN w_i^{-K_0-1-\frac{1}{\xi}}(\btheta)\leq \left(\frac{1}{n}\sumN w_i^{-K_0-1-\frac{1}{\xi_0-r}}(\btheta)\right)^{\frac{K_0+1+1/\xi}{K_0+1+1/(\xi_0-r)}}.$$

If $K_0+1+\frac{1}{\xi}<0$, the monotonicity of the power means then yields
$$\frac{1}{n}\sumN w_i^{-K_0-1-\frac{1}{\xi}}(\btheta)\leq \left(\frac{1}{n}\sumN w_i^{-K_0-1-\frac{1}{\xi_0+r}}(\btheta)\right)^{\frac{K_0+1+1/\xi}{K_0+1+1/(\xi_0+r)}}.$$

For both cases, we follow a similar proof to the previous case with the help of the classic $c_r$-inequality \eqref{Cr_ineq} to obtain upper bounds that depends only on $\btheta_0$ and $r$. Pick the greater of the two bounds, and we have $\phi_{K_0+1}(\btheta_0,r)$.
\hfill\(\Box\)

\vskip 0.5cm
\newcommand\NsuppTwo{2}
\begin{lemma}[Uniform bound]\label{shrink_neighborhood2}
Under the assumptions of Lemma \ref{shrink_neighborhood}, fix any $r>0$ small enough such that both \eqref{range_xi_supp} and $r<\min\{\frac{\tau_0}{3},\frac{0.8}{\Lambda_0}\}$ hold, where $\Lambda_0$ is defined in \eqref{concavity_const_def}. Then there almost surely exists $N_{\NsuppTwo}$ such that for any $n>N_{\NsuppTwo}$ and $\btheta\in B_{r}(\hat{\btheta}_n)\cap \Omega_n$,
\begin{equation*}
   \sumN w_i^{-k-\frac{1}{\xi}}(\btheta)\left|\log w_i(\btheta)\right|^b\leq n \varphi_{k}(\btheta_0,r),
\end{equation*}
where $k\in\{0,\ldots, K_0+1-b\}$, $b\in\{1,2,3\}$, and $\varphi_{k}(\btheta_0,r)$ only depends on $k$, $\btheta_0$ and $r$.
\end{lemma}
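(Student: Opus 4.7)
The plan is to mimic the three-stage argument of Lemma \ref{shrink_neighborhood}, prepended with a step that converts the logarithmic factor $|\log w_i(\btheta)|^b$ into pure powers of $w_i(\btheta)$. The tool I would use is the elementary bound that for every integer $b \geq 1$ and every $\delta > 0$ there exists $C_{b,\delta} > 0$ such that $|\log x|^b \leq C_{b,\delta}(x^\delta + x^{-\delta})$ for all $x > 0$, which holds because $x^{\pm \delta}$ dominates $|\log x|^b$ at both $0$ and $\infty$. Applied pointwise with $x = w_i(\btheta)$ and summed over $i$, it yields
$$\sumN w_i^{-k-\frac{1}{\xi}}(\btheta)|\log w_i(\btheta)|^b \leq C_{b,\delta}\left[\sumN w_i^{-k-\frac{1}{\xi}+\delta}(\btheta) + \sumN w_i^{-k-\frac{1}{\xi}-\delta}(\btheta)\right].$$

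I would then fix $\delta > 0$ small (depending only on $\btheta_0$ and $r$) to ensure that the shifted exponents $-k - 1/\xi \pm \delta$ remain inside the range treated by Lemma \ref{shrink_neighborhood}. Concretely, $\delta$ is chosen so that $(k \mp \delta)\xi_0 + \xi_0/(\xi_0 - r) + 1 > 0$---the same positivity condition behind \eqref{range_xi_supp1}---which is possible precisely because the hypothesis $k \leq K_0 + 1 - b$ with $b \in \{1,2,3\}$ provides the slack below the $K_0 + 1$ ceiling. Each of the two sums on the right is then bounded by replaying the three ingredients of Lemma \ref{shrink_neighborhood}: power-mean monotonicity (as in \eqref{holder_try}) reduces the sum at exponent $-k - 1/\xi \pm \delta$ to a power of the average at a convenient boundary exponent; the $c_r$-inequality \eqref{Cr_ineq} together with the change-of-variables bounds \eqref{abs_distance_nb0} and \eqref{abs_distance_nb1} swaps $w_i(\btheta)$ for $w_i(\hat{\btheta}_n)$ plus a boundary term of size at most $\Psi_0 r$; and Proposition \ref{prop:pseudo_SLLN} pins down the resulting average at $\hat{\btheta}_n$ almost surely in terms of a Gamma-function value depending only on $\btheta_0$ and $r$.

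Assembling the two bounds and absorbing $C_{b,\delta}$ into the constant yields an inequality of the desired form $n\varphi_k(\btheta_0, r)$, where $\varphi_k$ collects the $c_r$ factor, the slope bound $(1 + \Lambda_0 r)^{\text{exponent}}$, the boundary contribution $(\Psi_0 r)^{-\text{exponent}}$, the Gamma-function limit, and $C_{b,\delta}$. The main technical nuisance is confirming that a single $\delta$ can be chosen uniformly for all admissible $(k,b)$ and that the resulting constant is genuinely independent of both $n$ and $\btheta$. The restriction $k \leq K_0 + 1 - b$ is exactly what allows this uniform choice without encroaching on the $K_0 + 1$ threshold at which the pseudo-law of large numbers begins to fail, so both shifted exponents can be handled by the same machinery already developed for Lemma \ref{shrink_neighborhood}.
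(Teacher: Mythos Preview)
Your approach is correct and genuinely different from the paper's. You absorb the logarithm into the power via $|\log x|^b \le C_{b,\delta}(x^\delta + x^{-\delta})$ and then invoke Lemma~\ref{shrink_neighborhood} with the shifted exponents $k\mp\delta$. The paper instead writes $\log w_i(\btheta)=\log\bigl(\frac{\xi\hat{\tau}_n}{\tau\hat{\xi}_n}\bigr)+\log\bigl(w_i(\hat{\btheta}_n)+\frac{\hat{\xi}_n(\hat{\beta}_n-\beta)}{\hat{\tau}_n}\bigr)$, bounds the first piece using \eqref{abs_distance_nb1}, and controls the second via the Lipschitz estimate $|\log x-\log y|\le|x-y|/\min(x,y)$ together with the $c_r$-inequality, arriving at mixed sums such as $\sum w_i^{-k-1/\xi}(\btheta)|\log w_i(\hat{\btheta}_n)|^b$ and $\sum w_i^{-k-1/\xi}(\btheta)\,w_i^{-b}(\hat{\btheta}_n)$ that are then handed back to Lemma~\ref{shrink_neighborhood}. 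Your route is shorter and avoids these mixed $(\btheta,\hat{\btheta}_n)$ terms altogether; the paper's route is more explicit about how the log factor interacts with the change of base point, which is useful later for the Lipschitz continuity in Proposition~\ref{lem:lipschitz_cont}.

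One point you should state explicitly: Lemma~\ref{shrink_neighborhood} is formulated for integer $k\in\{0,\dots,K_0+1\}$, whereas you need it for the real values $k\mp\delta$. This is harmless because nothing in the proof of Lemma~\ref{shrink_neighborhood} uses integrality---the power-mean step \eqref{holder_try}, the $c_r$-inequality \eqref{after_c_r}, and the moment condition \eqref{range_xi_supp1} all work for real $k$ in the relevant range. In particular, the positivity $(k+\delta)\xi_0+\xi_0/(\xi_0-r)+1>0$ follows for $\delta$ small enough from the strict inequality in \eqref{range_xi_supp1}, and $(k-\delta)\xi_0+\xi_0/(\xi_0-r)+1>0$ is automatic since $\xi_0<0$. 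You should also note that for $b=1$, $k=K_0$ the shifted exponent $K_0+\delta+1/\xi$ remains negative provided $\delta<\epsilon_0$, so Case~A of Lemma~\ref{shrink_neighborhood} still applies.
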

{\it Proof. }
We deduce by \eqref{wi} 
\begin{equation}\label{decomposition}
\begin{split}
    \sumN w_i^{-k-\frac{1}{\xi}}(\btheta)\left|\log w_i(\btheta)\right|^b=&\left|\log \left(\frac{\xi\hat{\tau}_n}{\tau\hat{\xi}_n}\right)\right|^b\sumN w_i^{-k-\frac{1}{\xi}}(\btheta)+\\
    &\hspace{0.6cm}\sumN w_i^{-k-\frac{1}{\xi}}(\btheta)\bigg|\log\bigg( w_i(\hat{\btheta}_n)+\frac{\hat{\xi}_n(\hat{\beta}_n-\beta)}{\hat{\tau}_n}\bigg)\bigg|^b.
\end{split}
\end{equation}
Then \eqref{abs_distance_nb1} implies for all $n>N_{\Nsupp}$,
\begin{equation*}
    \left|\log \left(\frac{\xi\hat{\tau}_n}{\tau\hat{\xi}_n}\right)\right|\leq \frac{\sqrt{2}}{1-\Lambda_0r}\Lambda_0r< \frac{\sqrt{2}}{1-0.8}\Lambda_0r=5\sqrt{2}\Lambda_0r,
\end{equation*}
and Lemma \ref{shrink_neighborhood} guarantees
\begin{equation}\label{log_abs_approx}
    \left|\log \left(\frac{\xi\hat{\tau}_n}{\tau\hat{\xi}_n}\right)\right|^b\cdot\sumN w_i^{-k-\frac{1}{\xi}}(\btheta)<n(5\sqrt{2}\Lambda_0r)^b\phi_k(\btheta_0,r), \text{ for }\btheta\in B_{r}(\hat{\btheta}_n)\cap \Omega_n.
\end{equation}

Since $|\log x-\log y|\leq |x-y|/\min(x,y)$, and \eqref{abs_distance_nb0} ensures $\left|\frac{\hat{\xi}_n(\hat{\beta}_n-\beta)}{\hat{\tau}_n}\right|<\Psi_0 r$ for $n>N_{\Nsupp,1}$,
\begin{equation*}
    \left|\log\bigg( w_i(\hat{\btheta}_n)+\frac{\hat{\xi}_n(\hat{\beta}_n-\beta)}{\hat{\tau}_n}\bigg)-\log w_i(\hat{\btheta}_n)\right|\leq \frac{\hat{\xi}_n(\hat{\beta}_n-\beta)}{\hat{\tau}_n w_i(\hat{\btheta}_n)}<\Psi_0 r w^{-1}_i(\hat{\btheta}_n).
\end{equation*}
Therefore,
\begin{equation}\label{b1_approx}
    \left|\log\bigg( w_i(\hat{\btheta}_n)+\frac{\hat{\xi}_n(\hat{\beta}_n-\beta)}{\hat{\tau}_n}\bigg)\right|<\left|\log w_i(\hat{\btheta}_n)\right|+\Psi_0 r w^{-1}_i(\hat{\btheta}_n).
\end{equation}
Apply the $c_r$-inequality \eqref{Cr_ineq} on \eqref{b1_approx} to get 
\begin{equation}\label{b2_approx}
    \left|\log\bigg( w_i(\hat{\btheta}_n)+\frac{\hat{\xi}_n(\hat{\beta}_n-\beta)}{\hat{\tau}_n}\bigg)\right|^b<C_b\left(\left|\log w_i(\hat{\btheta}_n)\right|^b+(\Psi_0 r)^b w^{-b}_i(\hat{\btheta}_n)\right).
\end{equation}

Plugging \eqref{log_abs_approx} and \eqref{b2_approx} back into \eqref{decomposition}, we have 
\begin{small}
\begin{equation*}
\begin{split}
    \sumN w_i^{-k-\frac{1}{\xi}}(\btheta)\left|\log w_i(\btheta)\right|^b< C_b\sumN w_i^{-k-\frac{1}{\xi}}(\btheta)&|\log w_i(\hat{\btheta}_n)|^b+C_b(\Psi_0 r)^b\sumN w_i^{-k-\frac{1}{\xi}}(\btheta)w^{-b}_i(\hat{\btheta}_n)\\
    &+n(5\sqrt{2}\Lambda_0r)^b\phi_k(\btheta_0,r).
\end{split}
\end{equation*}
\end{small}
By Lemma \ref{shrink_neighborhood}, the right-hand side of the previous inequation can be uniformly bounded by a constant $\varphi_{k}(\btheta_0,r)$ that is determined by $k$, $\btheta_0$ and $r$, $k=0,\ldots,K_0+1-b$. 

\hfill\(\Box\)

\vskip 0.5cm
\begin{proposition}[Lipschitz continuity]\label{lem:lipschitz_cont}
Under the assumptions of Lemma \ref{shrink_neighborhood} and \ref{shrink_neighborhood2}, there almost surely exists $N$ such that, for any $n>N$ and $\btheta\in B_{r}(\hat{\btheta}_n)\cap \Omega_n$,
\begin{equation*}
\begin{split}
    \left|\sumN w_i^{-k-\frac{1}{\xi}}(\btheta)-\sumN w_i^{-k-\frac{1}{\hat{\xi}}}(\hat{\btheta}_n)\right|&\leq n\psi_k(\btheta_0,r)||\btheta-\hat{\btheta}_n||_{\infty},\\
    \left|\sumN w_i^{-k'-\frac{1}{\xi}}(\btheta)\log w_i(\btheta)-\sumN w_i^{-k'-\frac{1}{\hat{\xi}}}(\hat{\btheta}_n)\log w_i(\hat{\btheta}_n)\right|&\leq n\psi_{k'}(\btheta_0,r)||\btheta-\hat{\btheta}_n||_{\infty},\\
    \left|\sumN w_i^{-k''-\frac{1}{\xi}}(\btheta)\log^2 w_i(\btheta)-\sumN w_i^{-k''-\frac{1}{\hat{\xi}}}(\hat{\btheta}_n)\log^2 w_i(\hat{\btheta}_n)\right|&\leq n\psi_{k''}(\btheta_0,r)||\btheta-\hat{\btheta}_n||_{\infty},
\end{split}
\end{equation*}
where $k=0,\ldots, K_0$, $k'=0,\ldots, K_0-1$, $k''=0,\ldots, K_0-2$, and $\psi_{k}(\btheta_0,r),\;\psi_{k}(\btheta_0,r),\;\psi_{k''}(\btheta_0,r)$ are positive constants determined by $k$ (or $k'$, $k''$), $\btheta_0$ and the radius $r$.
\end{proposition}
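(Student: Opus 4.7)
My plan is to apply the mean value theorem along straight-line segments from $\hat{\btheta}_n$ to each $\btheta\in B_{r}(\hat{\btheta}_n)\cap\Omega_n$, which reduces the three claimed Lipschitz inequalities to a uniform gradient bound that in turn falls out of Lemmas \ref{shrink_neighborhood} and \ref{shrink_neighborhood2}. Three things must be checked: (i) these segments stay inside $\Omega_n$ so the chain rule applies; (ii) the partial derivatives of each target sum are themselves finite linear combinations of sums already controlled by the two preceding lemmas, with coefficients bounded in terms of $\btheta_0$ and $r$ alone; and (iii) the index restrictions $k\leq K_0$, $k'\leq K_0-1$, $k''\leq K_0-2$ in the statement are exactly what is required for the derivatives to remain inside the admissible range of those lemmas.

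For (i), the proof is being carried out in the regime $\xi_0<0$, so $\hat{\xi}_n\asconv\xi_0<0$ and, shrinking $r$ if necessary, for all large $n$ every $\xi\in(\hat{\xi}_n-r,\hat{\xi}_n+r)$ is negative, whence so is any convex combination $\xi_t=(1-t)\hat{\xi}_n+t\xi$. The defining condition of $\Omega_n$ at negative shape, $\beta>Y_{(n)}$, holds at both endpoints and is preserved under convex combinations; likewise $\tau_t>0$. Hence $B_{r}(\hat{\btheta}_n)\cap\Omega_n$ is star-convex with respect to $\hat{\btheta}_n$, justifying the mean value inequality along each chord.

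For (ii), in the $(\tau,\beta,\xi)$ parametrization one has $\partial_\tau w_i=-w_i/\tau$, $\partial_\beta w_i=-\xi/\tau$, and $\partial_\xi w_i=w_i/\xi$, from which a direct calculation yields, for any integers $k\geq 0$ and $b\geq 0$,
\begin{align*}
\partial_\tau\bigl[w_i^{-k-1/\xi}\log^b w_i\bigr] &= \tfrac{k\xi+1}{\xi\tau}\,w_i^{-k-1/\xi}\log^b w_i-\tfrac{b}{\tau}\,w_i^{-k-1/\xi}\log^{b-1}w_i,\\
\partial_\beta\bigl[w_i^{-k-1/\xi}\log^b w_i\bigr] &= \tfrac{k\xi+1}{\tau}\,w_i^{-k-1-1/\xi}\log^b w_i-\tfrac{b\xi}{\tau}\,w_i^{-k-1-1/\xi}\log^{b-1}w_i,\\
\partial_\xi\bigl[w_i^{-k-1/\xi}\log^b w_i\bigr] &= \tfrac{1}{\xi^2}\,w_i^{-k-1/\xi}\log^{b+1}w_i-\tfrac{k\xi+1}{\xi^2}\,w_i^{-k-1/\xi}\log^b w_i+\tfrac{b}{\xi}\,w_i^{-k-1/\xi}\log^{b-1}w_i.
\end{align*}
The coefficients $(k\xi+1)/(\xi\tau)$, $\xi/\tau$, $1/\xi^2$ and the like are smooth functions of $(\tau,\beta,\xi)$, uniformly bounded on $B_{r}(\hat{\btheta}_n)$ by constants depending only on $\btheta_0$ and $r$, since $\hat{\btheta}_n\to\btheta_0$. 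After summing over $i$, each component of the gradient of each target sum becomes a finite linear combination of the quantities bounded above by Lemma \ref{shrink_neighborhood} or Lemma \ref{shrink_neighborhood2}.

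The main obstacle is (iii), the index bookkeeping. Each $\partial_\beta$ raises the power index by $1$ while preserving or lowering the log-degree, and each $\partial_\xi$ raises the log-degree by $1$ while preserving the power index. The three cutoffs $k\leq K_0$, $k'\leq K_0-1$, $k''\leq K_0-2$ are calibrated precisely so that after one differentiation the resulting sums $\tfrac{1}{n}\sumN w_i^{-(k+1)-1/\xi}\log^b w_i$ and $\tfrac{1}{n}\sumN w_i^{-k-1/\xi}\log^{b+1}w_i$ still lie inside the admissible window $\{(k,b):k\leq K_0+1-b,\;b\leq 3\}$ spanned by the two preceding lemmas. Combining the resulting uniform gradient bound $\sup_{\btheta\in B_{r}(\hat{\btheta}_n)\cap\Omega_n}\|\nabla f(\btheta)\|_\infty\leq n\cdot C(\btheta_0,r)$ with the mean value inequality along the star-convex chord yields $|f(\btheta)-f(\hat{\btheta}_n)|\leq 3n\,C(\btheta_0,r)\,\|\btheta-\hat{\btheta}_n\|_\infty$, which is exactly the claimed Lipschitz estimate; no new estimate beyond Lemmas \ref{shrink_neighborhood}--\ref{shrink_neighborhood2} is required.
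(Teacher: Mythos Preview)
Your proposal is correct and follows essentially the same route as the paper: invoke convexity of $B_{r}(\hat{\btheta}_n)\cap\Omega_n$ in the $(\tau,\beta,\xi)$ parametrization, apply the mean value theorem along the segment from $\hat{\btheta}_n$ to $\btheta$, compute the gradient of $\sumN w_i^{-k-1/\xi}\log^b w_i$ explicitly, and bound each component using Lemmas~\ref{shrink_neighborhood} and~\ref{shrink_neighborhood2}. Your partial-derivative formulas in $(\tau,\beta,\xi)$ coordinates are correct and in fact slightly tidier than the paper's displayed Jacobian, and your explicit index bookkeeping (showing that the cutoffs $k\le K_0$, $k'\le K_0-1$, $k''\le K_0-2$ are exactly what keep the differentiated sums inside the window $\{k\le K_0+1-b,\ b\le 3\}$) makes that step more transparent than the paper's version, which simply asserts it.
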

{\it Proof. }
Given that $\btheta\in B_{r}(\hat{\btheta}_n)\cap \Omega_n$ is a convex set under the parametrization $(\beta,\xi,\tau)$, the mean value theorem can be applied to assess
\begin{equation*}
    \left|\sumN w_i^{-k-\frac{1}{\xi}}(\btheta)-\sumN w_i^{-k-\frac{1}{\hat{\xi}_n}}(\hat{\btheta}_n)\right|\leq \left\lVert\sumN \evalat[\Big]{\bigtriangledown \bigg(w_i^{-k-\frac{1}{\xi}}\bigg)}{\btheta^+_n}\right\rVert_{\infty} \cdot||\btheta-\hat{\btheta}_n||_{\infty},
\end{equation*}
where $\btheta^+_n=(\beta^+_n,\xi^+_n,\tau^+_n)$ lies between $\btheta$ and $\hat{\btheta}_n$, and the Jacobian
\begin{footnotesize}
\begin{equation*}
    \evalat[\Big]{\bigtriangledown \big(w_i^{-k-\frac{1}{\xi}}\big)}{\btheta}=\frac{k\xi+1}{\xi}w_i^{-k-\frac{1}{\xi}}(\btheta)\cdot\left(\frac{\xi}{\tau}w_i^{-1}(\btheta),\;\frac{1}{k\xi^2+\xi}\log w_i(\btheta)-\frac{1}{\xi}(1-w_i^{-1}(\btheta)),\;\frac{1}{\tau}(1-w_i^{-1}(\btheta))\right).
\end{equation*}
\end{footnotesize}
By Lemma \ref{shrink_neighborhood} and \ref{shrink_neighborhood2}, we can easily find $\psi_k(\btheta_0,r)>0$ which is determined by $k$, $\btheta_0$ and the radius $r$ such that
\begin{footnotesize}
\begin{equation*}
    \left\lVert\sumN \evalat[\Big]{\bigtriangledown \bigg(w_i^{-k-\frac{1}{\xi}}\bigg)}{\btheta^+_n}\right\rVert_{\infty}\leq n\psi_k(\btheta_0,r).
\end{equation*}
\end{footnotesize}
Therefore the first inequality holds.

Similarly, we derive
\begin{footnotesize}
\begin{equation*}
    \begin{split}
        \left.\evalat[\Big]{\bigtriangledown \bigg(w_i^{-k'-\frac{1}{\xi}}\log w_i\bigg)}{\btheta}=\right(&\left.w_i^{-k'-1-\frac{1}{\xi}}(\btheta)\left[\frac{k'\xi+1}{\tau}\log w_i(\btheta)-\frac{\xi}{\tau}\right],\right.\\
        &w_i^{-k'-\frac{1}{\xi}}(\btheta)\log^2 w_i(\btheta)\left[\frac{1}{\xi^2}-\frac{k'\xi+1}{\xi^2}\frac{1-w_i^{-1}(\btheta)}{\log w_i(\btheta)}+\frac{1-w_i^{-1}(\btheta)}{\xi\log^2 w_i(\btheta)}\right],\\
        &\left.w_i^{-k'-\frac{1}{\xi}}(\btheta)(1-w_i^{-1}(\btheta))\left[\frac{k\xi+1}{\xi\tau}\log w_i(\btheta)-\frac{1}{\tau}\right]\right),
    \end{split}
\end{equation*}
\end{footnotesize}
and 
\begin{footnotesize}
\begin{equation*}
    \begin{split}
        \evalat[\Big]{\bigtriangledown \bigg(w_i^{-k'-\frac{1}{\xi}}\log^2 w_i\bigg)}{\btheta}&=\left(w_i^{-k'-1-\frac{1}{\xi}}(\btheta)\log w_i(\btheta)\left[\frac{k'\xi+1}{\tau}\log w_i(\btheta)-\frac{2\xi}{\tau}\right],\right.\\
        &w_i^{-k'-\frac{1}{\xi}}(\btheta)\log^2 w_i(\btheta)\left[\frac{1}{\xi^2}\log w_i(\btheta)-\frac{k'\xi+1}{\xi^2}(1-w_i^{-1}(\btheta))+\frac{2}{\xi}\frac{1-w_i^{-1}(\btheta)}{\log w_i(\btheta)}\right],\\
        &\left.w_i^{-k'-\frac{1}{\xi}}(\btheta)(1-w_i^{-1}(\btheta))\log w_i(\btheta)\left[\frac{k\xi+1}{\xi\tau}\log w_i(\btheta)-\frac{2}{\tau}\right]\right),
    \end{split}
\end{equation*}
\end{footnotesize}
in which all the terms under expansion have been studied in Lemma \ref{shrink_neighborhood} and \ref{shrink_neighborhood2}. Hence, the second and third inequality also holds.
\hfill\(\Box\)


\section{Proof of Proposition \ref{prop:step1}}\label{proof:step1}

We first study the relationship between $\beta_n(\xi)$ and $\hat{\beta}_n=\beta_n(\hat{\xi}_n)$ when $\xi\neq \hat{\xi}_n$. This relationship is illustrated visually in Figure \ref{fig:num_sim_cross_section}, where we simulate $n=1,000$ samples using $(\tau_0,\mu_0,\xi_0)=(0.5,20,0.2)$, and then evaluate the log-likelihood on different cross sections. Through comparing the intercepts, the numerical results confirm Proposition \ref{prop:profile_lik_uniqueML} in that $\beta_n(\xi)<\hat{\beta}_n$ when  $\xi<\hat{\xi}_n$, and $\beta_n(\xi)>\hat{\beta}_n$ as  $\xi>\hat{\xi}_n$. This property stems from the monotonicity of $\beta_n(\xi)$ as a function of $\xi$.
\begin{figure}
    \centering
    \includegraphics[width=0.327\textwidth]{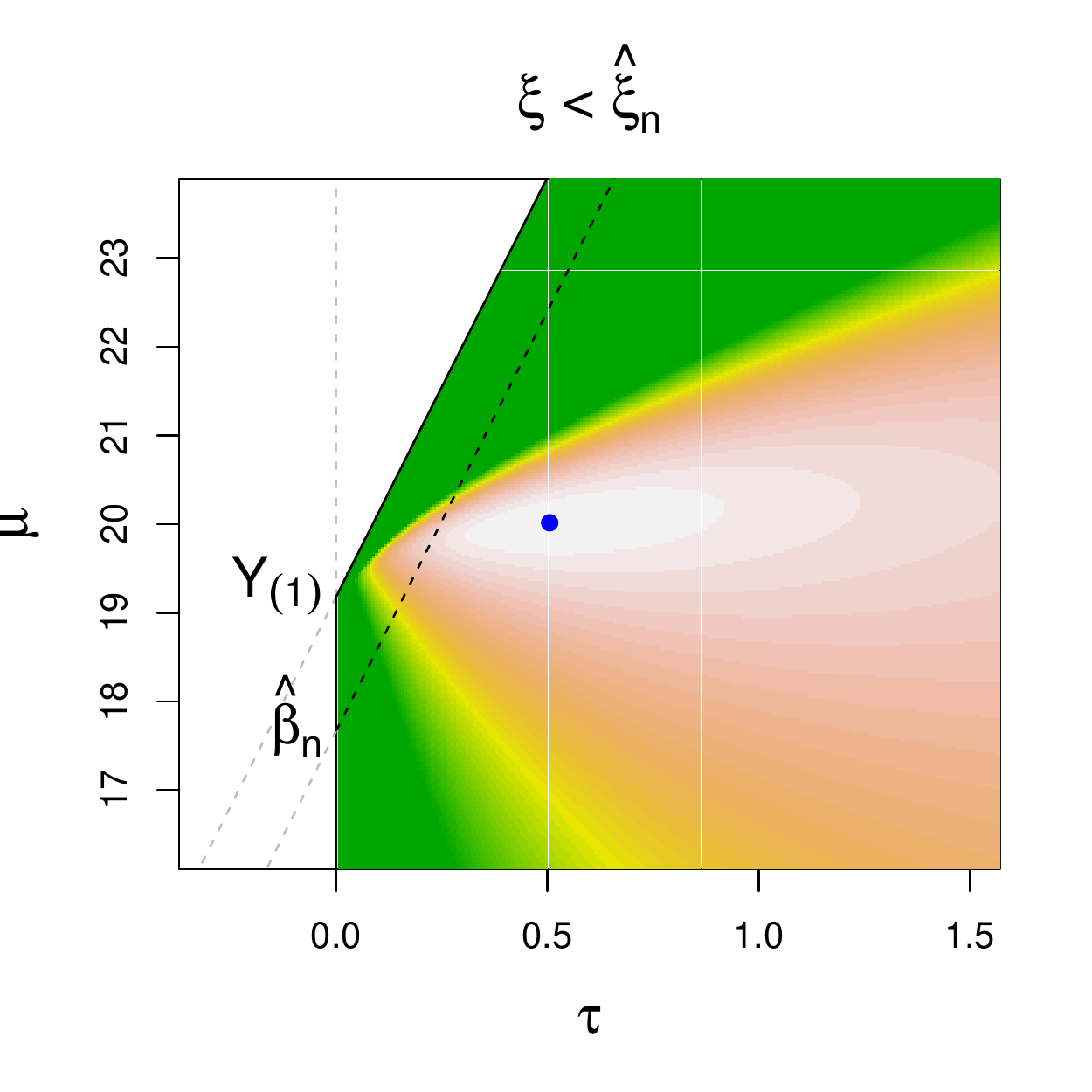}
    \includegraphics[width=0.327\textwidth]{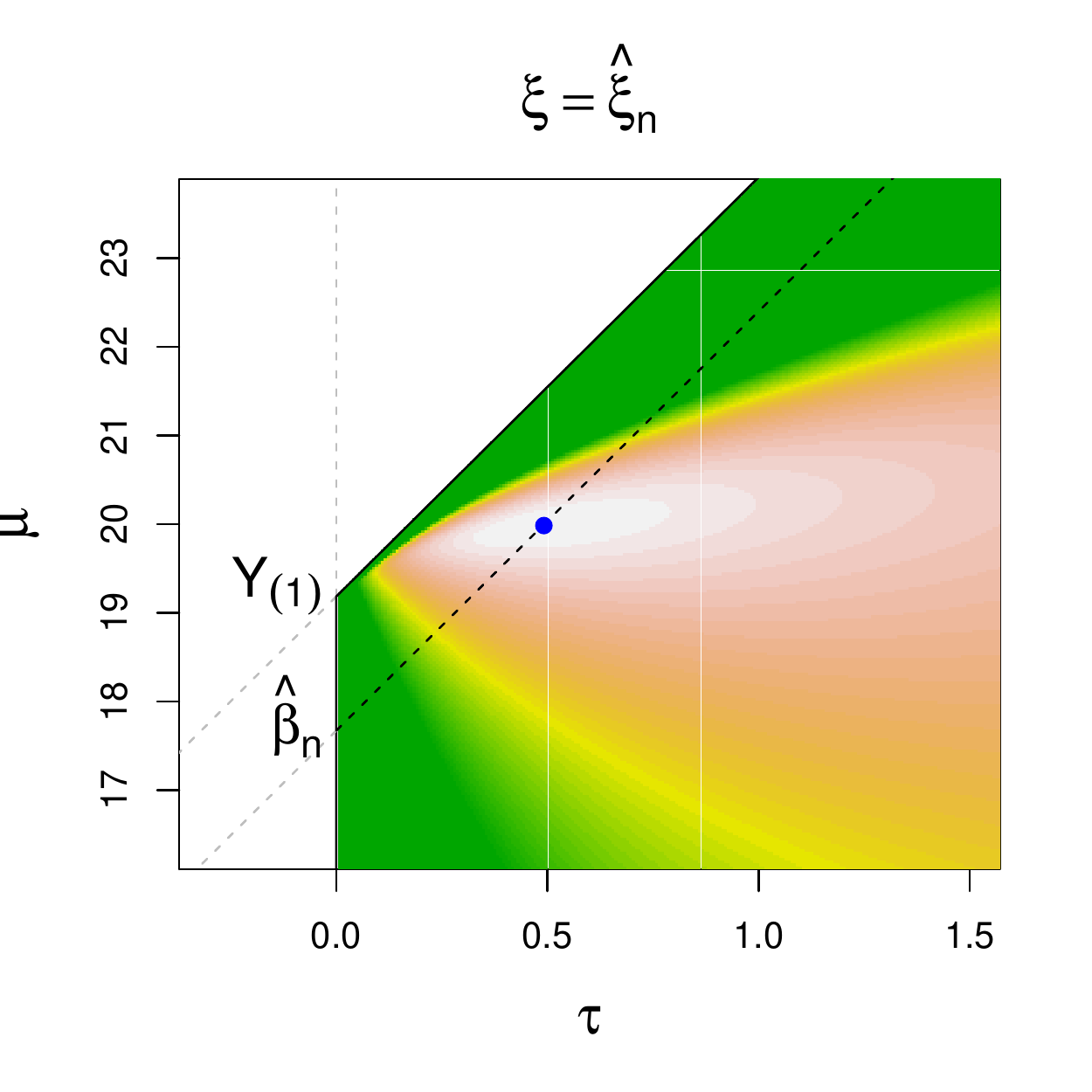}
    \includegraphics[width=0.327\textwidth]{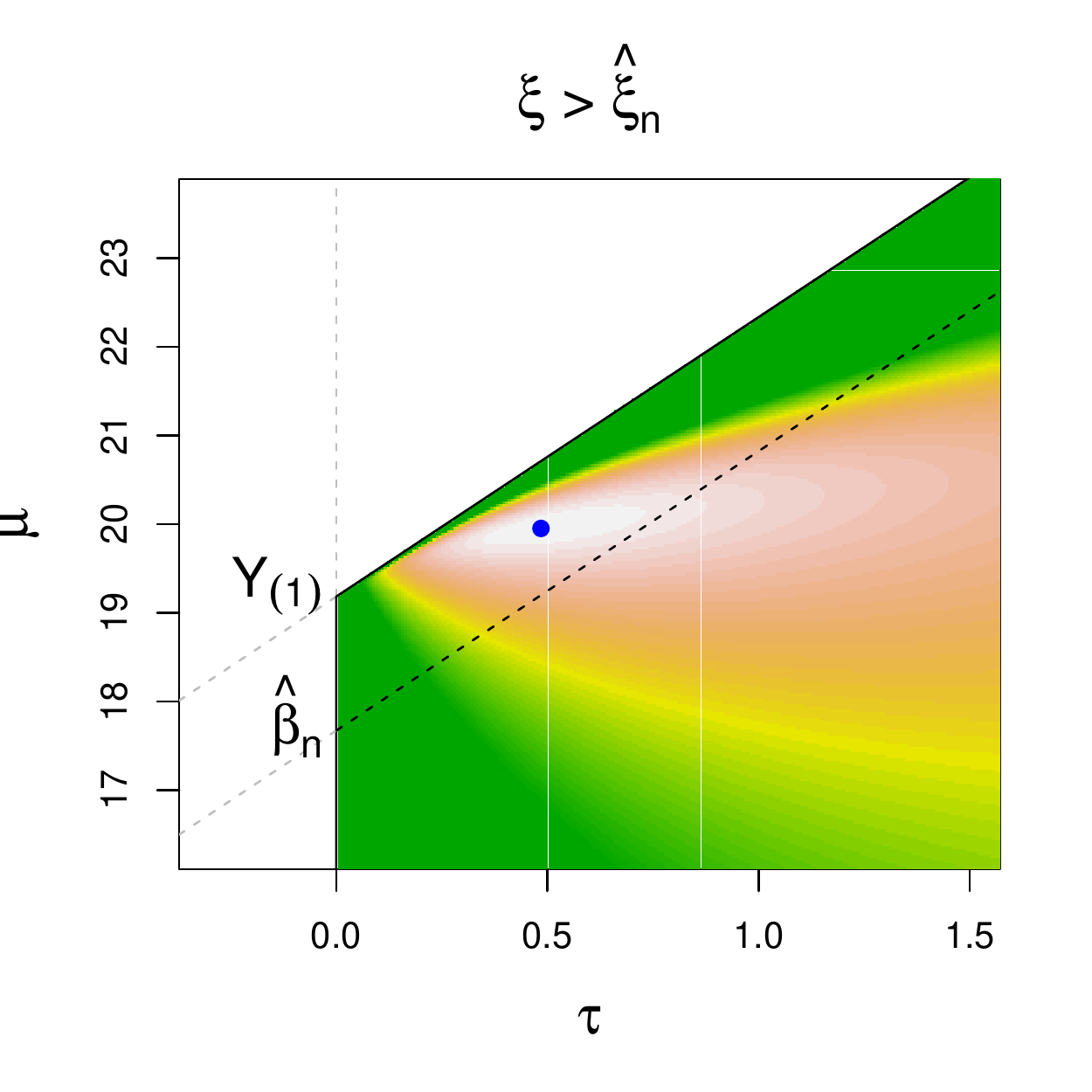}
    \caption{Contour plots of the log-likelihood $L_n(\btheta)$ at cross sections of different levels of $\xi$, where $Y_i,\;i=1,\ldots,n$ that are sampled from true $\xi_0=0.2$. Blue points mark the maximizer $(\tau_n(\xi),\mu_n(\xi))$ on each slice, and the black dashed lines showing the lines that has intercept $\hat{\beta}_n$ and slope $1/\xi$. \textit{Left}: $\xi=0.5\hat{\xi}_n$, and we see $\beta_n(\xi)<\hat{\beta}_n$. \textit{Middle}: $\xi=\hat{\xi}_n$, and $\hat{\beta}_n=\beta_n(\hat{\xi}_n)$. \textit{Right}: $\xi=1.5\hat{\xi}_n$, and we see $\beta_n(\xi)>\hat{\beta}_n$.}
    \label{fig:num_sim_cross_section}
\end{figure}
\begin{lemma}\label{lem:beta_xi_mono}
Under the same assumptions as Proposition \ref{prop:profile_lik_uniqueML}, $\beta_n(\xi)=\mu_n(\xi)-\tau_n(\xi)/\xi$ as function of $\xi$ satisfies $\beta'_n(\xi)>0$ for any $\xi\neq 0$ and $-1<\xi<n-1$. Furthermore, Lemma \ref{lem:monotone_on_beta} guarantees $\beta_n(\xi)\nearrow Y_{(1)}$ as $\xi\nearrow n-1$, and $\beta_n(\xi)\searrow Y_{(n)}$ as $\xi\searrow -1$; see Figure \ref{fig:beta_xi}.
\end{lemma}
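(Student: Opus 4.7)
The plan is to prove $\beta'_n(\xi)>0$ by applying the implicit function theorem to the defining relation $H_n(\beta_n(\xi);\xi)\equiv 0$, where $H_n(\beta;\xi)$ is exactly the function introduced in Lemma \ref{lem:monotone_on_beta}, now regarded as a function of both arguments. Differentiating this identity in $\xi$ gives
\begin{equation*}
\beta'_n(\xi)=-\frac{\partial H_n/\partial\xi}{\partial H_n/\partial\beta}\bigg|_{\beta=\beta_n(\xi)},
\end{equation*}
and Lemma \ref{lem:monotone_on_beta} already supplies $\partial H_n/\partial\beta>0$. So everything reduces to proving $\partial H_n/\partial\xi\big|_{\beta_n(\xi)}<0$.

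To compute this, I would write $A=\sum\delta_i^{-1-1/\xi}$, $B=\sum\delta_i^{-1/\xi}$, $C=\sum\delta_i^{-1}$ with $\delta_i=\xi(Y_i-\beta)$, so that $H_n=A/(BC)-(\xi+1)/n$. Holding $\beta$ fixed, the identity $\partial\delta_i/\partial\xi=\delta_i/\xi$ makes $A_\xi$ and $B_\xi$ routine, and gives the clean form $C_\xi=-C/\xi$. Substituting the cross-section optimality relation $A/(BC)=(\xi+1)/n$ from the second equation of \eqref{max_cross_section_cond} collapses the bookkeeping: all constant pieces combine to $-1/n$, and after simplification one obtains
\begin{equation*}
\frac{\partial H_n}{\partial\xi}\bigg|_{\beta_n(\xi)}=\frac{\xi+1}{n\xi^2}\left(\frac{\sum\delta_i^{-1-1/\xi}\log\delta_i}{\sum\delta_i^{-1-1/\xi}}-\frac{\sum\delta_i^{-1/\xi}\log\delta_i}{\sum\delta_i^{-1/\xi}}\right)-\frac{1}{n}.
\end{equation*}

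The key remaining step, and the one I expect to be the main obstacle, is signing the bracketed difference of two weighted averages of $\log\delta_i$. I would do this by invoking the Seitz inequality (Lemma \ref{seitz_ineq}), in the same spirit as in the proof of Lemma \ref{lem:monotone_on_beta}. Reindexing so that $\delta_{(1)}\le\cdots\le\delta_{(n)}$ and choosing $x_i=\delta_{(i)}^{-1}$, $y_i=1$, $z_i=\delta_{(i)}^{-1/\xi}\log\delta_{(i)}$, $u_i=\delta_{(i)}^{-1/\xi}$, each pairwise determinant product is non-positive (and is strictly negative for at least one pair of indices, because the $Y_i$ are not all equal). Seitz then delivers
\begin{equation*}
\sum\delta_i^{-1-1/\xi}\log\delta_i\cdot\sum\delta_i^{-1/\xi}<\sum\delta_i^{-1/\xi}\log\delta_i\cdot\sum\delta_i^{-1-1/\xi},
\end{equation*}
so the bracketed difference above is strictly negative.

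Putting the pieces together: for $\xi\in(-1,n-1)$ with $\xi\neq 0$ the prefactor $(\xi+1)/(n\xi^2)$ is strictly positive, so the first summand is strictly negative, and subtracting $1/n$ preserves negativity. Hence $\partial H_n/\partial\xi\big|_{\beta_n(\xi)}<0$, and the implicit function theorem yields $\beta'_n(\xi)>0$ throughout the stated range. The boundary limits $\beta_n(\xi)\nearrow Y_{(1)}$ as $\xi\nearrow n-1$ and $\beta_n(\xi)\searrow Y_{(n)}$ as $\xi\searrow-1$ are inherited from Lemma \ref{lem:monotone_on_beta}, are consistent with strict monotonicity, and thus require no separate argument.
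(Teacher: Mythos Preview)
Your proposal is correct and follows essentially the same route the paper indicates: applying the implicit function theorem to the second equation of \eqref{max_cross_section_cond}, which is precisely $H_n(\beta;\xi)=0$, and then signing $\partial H_n/\partial\xi$ at $\beta_n(\xi)$ via a Seitz-type comparison. One small remark: you do not actually need the bracketed difference to be \emph{strictly} negative, since the trailing $-1/n$ already forces $\partial H_n/\partial\xi\big|_{\beta_n(\xi)}<0$; the non-strict Seitz conclusion (products $\le 0$) suffices, which sidesteps any delicacy about when Seitz yields a strict inequality.
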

\begin{proof}
See the supplementary material for the proof, in which we calculate the derivative via applying the implicit function theorem on \eqref{max_cross_section_cond}.
\end{proof}
\begin{figure}
    \centering
    \includegraphics[width=0.35\textwidth]{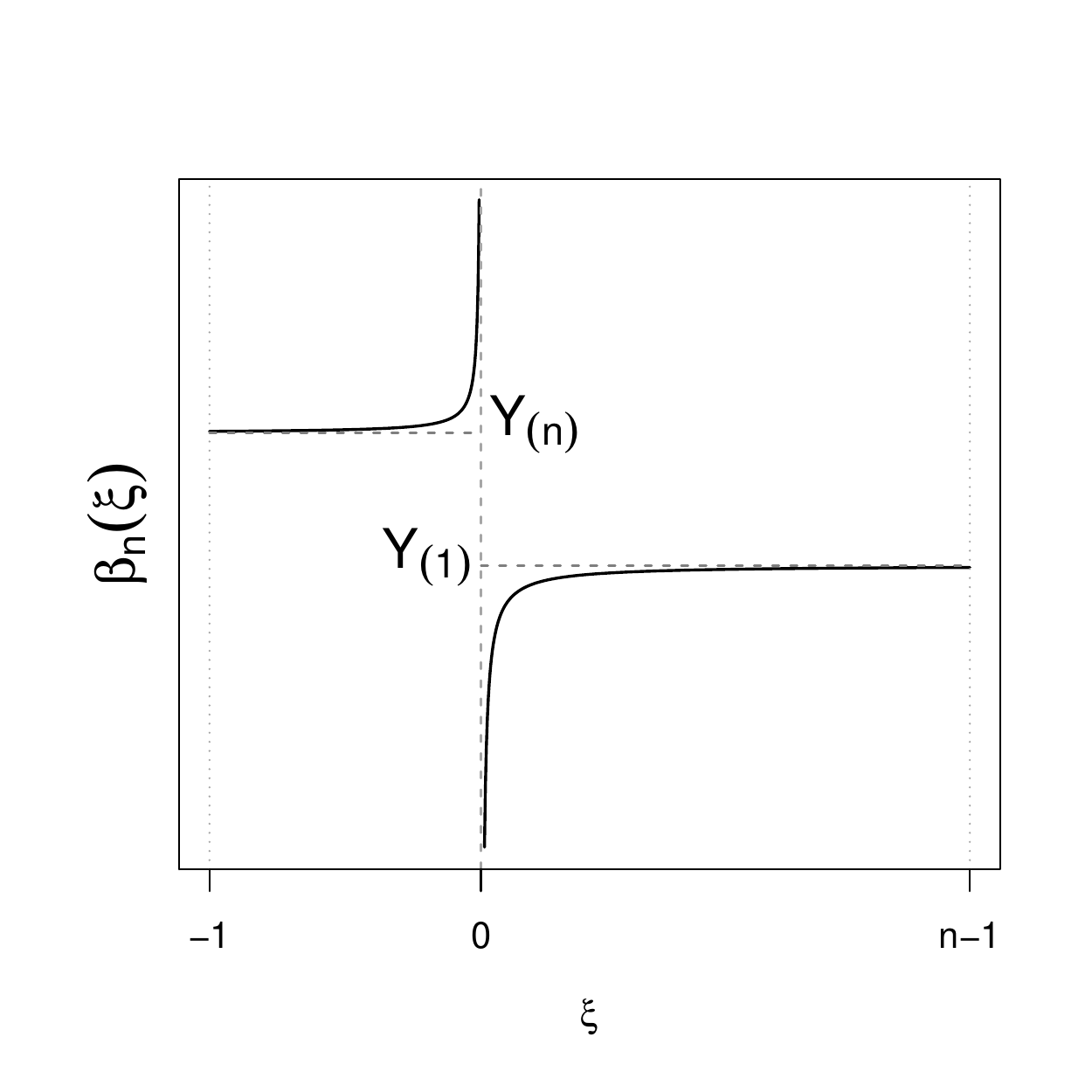}
    \caption{Illustration of $\beta_n(\xi)$ when $\xi\in (-1,n-1)$.
    }
\label{fig:beta_xi}
\end{figure}

\begin{lemma}\label{lem:monotone_on_xi}
Suppose $\xi_0>0$, and $Y_1,\ldots, Y_n\iidY P_{\btheta_0}$. Define
$$F_n(\xi)=n\log\xi+\frac{1}{\xi}\sumN\log w_i(\hat{\btheta}_n)+n\log\sumN w_i(\hat{\btheta}_n)^{-\frac{1}{\xi}}.$$
Then for any $\xi>0$, $F_n(\xi)-F_n(\hat{\xi}_n)\geq n\left(\frac{\hat{\xi}_n}{\xi}-\log\frac{\hat{\xi}_n}{\xi}-1\right)$.
\end{lemma}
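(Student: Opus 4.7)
The convenient move is to reparametrize by $t = 1/\xi$ and write $L_i := \log w_i(\hat{\btheta}_n)$, which converts $F_n$ into
$$\tilde{F}(t) := F_n(1/t) = -n\log t + t\sum_{i=1}^n L_i + n\log \sum_{i=1}^n e^{-tL_i}.$$
With $\hat{t} := 1/\hat{\xi}_n$, the target inequality becomes $\tilde{F}(t) - \tilde{F}(\hat{t}) \geq n\bigl(t/\hat{t} - \log(t/\hat{t}) - 1\bigr)$, because $t/\hat{t} = \hat{\xi}_n/\xi$. Strong consistency of $\hat{\xi}_n$ together with $\xi_0 > 0$ makes $\hat{t} > 0$ almost surely for large $n$, so the reparametrization is legitimate. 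The first step is to verify $\tilde{F}'(\hat{t}) = 0$ directly from Lemma \ref{jacobian}: the second identity there gives $\sum_i e^{-\hat{t}L_i} = n$ and the third gives $\sum_i L_i e^{-\hat{t}L_i} = \sum_i L_i - n\hat{\xi}_n$, and these plug straight into $\tilde{F}'(t) = -n/t + \sum_i L_i - n\,\bigl(\sum_i L_i e^{-tL_i}\bigr)\big/\bigl(\sum_i e^{-tL_i}\bigr)$ to yield zero at $t = \hat{t}$.

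The next step is to extract a clean lower bound on $\tilde{F}''$. Writing $p_t(i) := e^{-tL_i}/\sum_j e^{-tL_j}$ for the associated Boltzmann weights, a log-sum-exp calculation gives
$$\tilde{F}''(t) = \frac{n}{t^2} + n\,\mathrm{Var}_{p_t}(L) \geq \frac{n}{t^2}, \qquad t>0,$$
the inequality being immediate from nonnegativity of variance. This is the only analytic input I need.

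To close, I introduce the comparison function $g(t) := n\bigl(t/\hat{t} - \log(t/\hat{t}) - 1\bigr)$, which satisfies $g(\hat{t}) = 0$, $g'(\hat{t}) = 0$, and $g''(t) = n/t^2$ exactly. Then $h(t) := \tilde{F}(t) - g(t) - \tilde{F}(\hat{t})$ satisfies $h(\hat{t}) = 0$, $h'(\hat{t}) = 0$, and $h''(t) \geq 0$ for every $t > 0$, so $h$ is convex on $(0,\infty)$ and attains its global minimum at $\hat{t}$. Therefore $\tilde{F}(t) - \tilde{F}(\hat{t}) \geq g(t)$ for every $t > 0$, which is the desired bound after substituting $t = 1/\xi$, $\hat{t} = 1/\hat{\xi}_n$. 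I do not foresee a genuine obstacle; the mild subtlety is recognizing that the right-hand side $n(s - \log s - 1)$ is exactly the Bregman-type penalty matched to the curvature lower bound $n/t^2$, so that the sharpest inequality consistent with $\tilde{F}''\geq n/t^2$ and $\tilde{F}'(\hat{t})=0$ is precisely the one asserted.
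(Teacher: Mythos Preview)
Your proof is correct and takes a genuinely different route from the paper. The paper differentiates $F_n$ directly in $\xi$, then invokes the Seitz inequality (Lemma~\ref{seitz_ineq}) to show that the weighted mean $\sum_i w_i^{-1/\xi}(\hat{\btheta}_n)\log w_i(\hat{\btheta}_n)\big/\sum_i w_i^{-1/\xi}(\hat{\btheta}_n)$ is monotone in $\xi$, yielding the first-derivative comparison $F_n'(\xi)\gtrless n(\xi-\hat{\xi}_n)/\xi^2$ on either side of $\hat{\xi}_n$; integrating via Newton--Leibniz then gives the bound. Your reparametrization $t=1/\xi$ exposes the log-sum-exp structure, so that the same monotonicity falls out of the standard identity $\frac{d^2}{dt^2}\log\sum_i e^{-tL_i}=\mathrm{Var}_{p_t}(L)\geq 0$; the second-derivative lower bound $\tilde{F}''(t)\geq n/t^2$ and the matched comparison function $g$ then replace the integration step with a one-line convexity argument. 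What you gain is that the Seitz inequality is avoided entirely and the mechanism is recognizable as cumulant-generating-function convexity; what the paper's approach gains is that it stays in the original $\xi$-parametrization used throughout the rest of Appendix~\ref{proof:step1}. One cosmetic point: you do not actually need to appeal to strong consistency of $\hat{\xi}_n$ to justify $\hat{t}>0$, since $F_n(\hat{\xi}_n)$ being well-defined (it contains $\log\hat{\xi}_n$) already forces $\hat{\xi}_n>0$ in the statement.
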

\begin{proof}
The first derivative of $F_n$ is
$$F_n'(\xi)=\frac{1}{\xi^2}\left[n\xi-\sumN\log w_i(\hat{\btheta}_n)+\frac{n\sumN w_i(\hat{\btheta}_n)^{-\frac{1}{\xi}}\log w_i(\hat{\btheta}_n)}{\sumN w_i(\hat{\btheta}_n)^{-\frac{1}{\xi}}}\right].$$
Since $\evalat[\big]{\frac{\partial L_n}{\partial \xi}}{\hat{\btheta}_n}=0$ by definition, we deduce
$$-\sumN\log w_i(\hat{\btheta}_n)=-n\hat{\xi}_n-\frac{n\sumN w_i(\hat{\btheta}_n)^{-\frac{1}{\hat{\xi}_n}}\log w_i(\hat{\btheta}_n)}{\sumN w_i(\hat{\btheta}_n)^{-\frac{1}{\hat{\xi}_n}}}.$$
Therefore,
$$F_n'(\xi)=\frac{n}{\xi^2}\left[\xi-\hat{\xi}_n+\frac{\sumN w_i(\hat{\btheta}_n)^{-\frac{1}{\xi}}\log w_i(\hat{\btheta}_n)}{\sumN w_i(\hat{\btheta}_n)^{-\frac{1}{\xi}}}-\frac{\sumN w_i(\hat{\btheta}_n)^{-\frac{1}{\hat{\xi}_n}}\log w_i(\hat{\btheta}_n)}{\sumN w_i(\hat{\btheta}_n)^{-\frac{1}{\hat{\xi}_n}}}\right].$$
Denote $x_i=w_{(i)}(\hat{\btheta}_n)^{-\frac{1}{\xi}}$, $y_i=w_{(i)}(\hat{\btheta}_n)^{-\frac{1}{\hat{\xi}_n}}$, $z_i=\log w_{(i)}(\hat{\btheta}_n)$ and $u_i=1$. When $\xi>\hat{\xi}_n$,
\begin{equation*}
\begin{vmatrix}
x_i&x_j\\
y_i&y_j
\end{vmatrix}\times
\begin{vmatrix}
z_r&z_s\\
u_r&u_s
\end{vmatrix}=
\frac{\left[w_{(i)}(\hat{\btheta}_n)^{\frac{1}{\hat{\xi}_n}-\frac{1}{\xi}}-w_{(j)}(\hat{\btheta}_n)^{\frac{1}{\hat{\xi}_n}-\frac{1}{\xi}}\right][\log w_{(r)}(\hat{\btheta}_n) -\log w_{(s)}(\hat{\btheta}_n) ]}{[w_{(i)}(\hat{\btheta}_n)w_{(j)}(\hat{\btheta}_n)]^{\frac{1}{\hat{\xi}_n}}}>0,
\end{equation*}
for any pairs of indices $i<j$ and $r<s$. Hence we can apply the Seitz inequality to acquire
$$\sumN w_i(\hat{\btheta}_n)^{-\frac{1}{\xi}}\log w_i(\hat{\btheta}_n)\sumN w_i(\hat{\btheta}_n)^{-\frac{1}{\hat{\xi}_n}}\geq \sumN w_i(\hat{\btheta}_n)^{-\frac{1}{\hat{\xi}_n}}\log w_i(\hat{\btheta}_n)\sumN w_i(\hat{\btheta}_n)^{-\frac{1}{\xi}}.$$
Similarly, we can prove the above inequality holds in an opposite direction when $\xi<\hat{\xi}_n$. 

Consequently,
\begin{equation*}
    F_n'(\xi)\geq\frac{n(\xi-\hat{\xi}_n)}{\xi^2} \text{ for }\xi>\hat{\xi}_n, \text{ and }F_n'(\xi)\leq\frac{n(\xi-\hat{\xi}_n)}{\xi^2} \text{ for }\xi<\hat{\xi}_n.
\end{equation*}
By Newton-Leibniz formula, 
\begin{equation}\label{An1}
    F_n(\xi)-F_n(\hat{\xi}_n)\geq \int_{\hat{\xi}_n}^{\xi}\frac{n(\xi-\hat{\xi}_n)}{\xi^2} d\xi=n\left(\frac{\hat{\xi}_n}{\xi}-\log\frac{\hat{\xi}_n}{\xi}-1\right)\text{ for all }\xi>0.
\end{equation}
\end{proof}

\begin{lemma}\label{lem:A_n2}
Suppose $\xi_0>0$, and $Y_1,\ldots, Y_n\iidY P_{\btheta_0}$. Define
$$G_n(\xi)=\frac{\xi+1}{\xi}\sumN\log\frac{Y_i-\hat{\beta}_n}{Y_i-\beta_n(\xi)}+n\log\frac{\sumN (Y_i-\hat{\beta}_n)^{-1/\xi}}{\sumN (Y_i-\beta_n(\xi))^{-1/\xi}}.$$
Then for $0<\xi<\hat{\xi}_n$, 
$G_n(\xi)\leq -n\left[2\xi_0\gamma\left(\frac{1}{\hat{\xi}_n}-\frac{1}{\xi}\right)+\frac{1}{2}\log\Gamma\left(\frac{\xi_0}{\xi}+1\right)\right]$ almost surely when $n$ is sufficiently large.
\end{lemma}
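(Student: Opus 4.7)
The plan is to split $G_n(\xi)=G_n^{(1)}(\xi)+G_n^{(2)}(\xi)$ with
\[
G_n^{(1)}(\xi)=\frac{\xi+1}{\xi}\sumN\log\frac{Y_i-\hat{\beta}_n}{Y_i-\beta_n(\xi)},\qquad G_n^{(2)}(\xi)=n\log\frac{\sumN(Y_i-\hat{\beta}_n)^{-1/\xi}}{\sumN(Y_i-\beta_n(\xi))^{-1/\xi}},
\]
and to bound each piece separately using the asymptotic tools of Section \ref{sec:global_mle}. By Lemma \ref{lem:beta_xi_mono}, $\beta_n(\xi)<\hat{\beta}_n$ when $\xi<\hat{\xi}_n$, so $G_n^{(1)}$ is non-positive while $G_n^{(2)}$ is non-negative; the bound has to come from the net cancellation, where the positive $G_n^{(2)}$ piece is controlled by $-\tfrac{n}{2}\log\Gamma(\xi_0/\xi+1)$ and the negative $G_n^{(1)}$ piece generates the $2n\xi_0\gamma(1/\xi-1/\hat{\xi}_n)$ contribution.

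For $G_n^{(2)}$, the first equation in \eqref{max_cross_section_cond} eliminates $\beta_n(\xi)$ in the denominator through the identity $\sumN(Y_i-\beta_n(\xi))^{-1/\xi}=n(\xi/\tau_n(\xi))^{1/\xi}$, so $G_n^{(2)}(\xi)=n\log\{\tfrac{1}{n}\sumN(Y_i-\hat{\beta}_n)^{-1/\xi}\}+\tfrac{n}{\xi}\log(\tau_n(\xi)/\xi)$. Writing $\sumN(Y_i-\hat{\beta}_n)^{-1/\xi}=(\hat{\tau}_n/\hat{\xi}_n)^{-1/\xi}\sumN w_i(\hat{\btheta}_n)^{-1/\xi}$ and invoking Proposition \ref{prop:uniform_consistency} with $\alpha=1/\xi$ and $b=0$ gives $\tfrac{1}{n}\sumN w_i(\hat{\btheta}_n)^{-1/\xi}\asconv\Gamma(\xi_0/\xi+1)$; combined with $\hat{\btheta}_n\asconv\btheta_0$, this yields the $n\log\Gamma(\xi_0/\xi+1)$ piece of the bound, with the $\log(\tau_n(\xi)/\xi)$ remainder handled by substituting \eqref{max_cross_section_cond} once more to express $\tau_n(\xi)$ in terms of sums that themselves obey the pseudo-law of large numbers.

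For $G_n^{(1)}$, I would decompose $\log(Y_i-\hat{\beta}_n)=\log(\hat{\tau}_n/\hat{\xi}_n)+\log w_i(\hat{\btheta}_n)$ and apply Proposition \ref{prop:pseudo_SLLN} with $k=a=0, b=1$ to obtain $\tfrac{1}{n}\sumN\log w_i(\hat{\btheta}_n)\asconv-\xi_0\Gamma'(1)=\xi_0\gamma$, where $\gamma$ is the Euler--Mascheroni constant. The companion sum $\sumN\log(Y_i-\beta_n(\xi))$ is handled by combining the second equation in \eqref{max_cross_section_cond} at both $\xi$ and $\hat{\xi}_n$ with a Seitz-type comparison in the spirit of Lemma \ref{lem:monotone_on_xi}; the linear-in-$1/\xi$ discrepancy between the two intercepts then produces exactly the $2n\xi_0\gamma(1/\hat{\xi}_n-1/\xi)$ term appearing inside the bracket.

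The chief obstacle is that, unlike $\hat{\btheta}_n$, the pair $(\beta_n(\xi),\tau_n(\xi))$ does not converge to $(\beta_0,\tau_0)$ when $\xi\ne\xi_0$, so neither Proposition \ref{prop:pseudo_SLLN} nor Proposition \ref{prop:uniform_consistency} applies directly to any sum written in terms of $\beta_n(\xi)$. My workaround is to use \eqref{max_cross_section_cond} as an algebraic lever that systematically replaces sums in $\beta_n(\xi)$ by sums in $\hat{\btheta}_n$ and explicit $\tau_n(\xi)$ factors, and then to pin down $\tau_n(\xi)$ via the Seitz-inequality machinery already deployed in the proof of Lemma \ref{lem:monotone_on_beta} together with the monotonicity of $\beta_n$ from Lemma \ref{lem:beta_xi_mono}. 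A secondary difficulty is forcing the $o(n)$ fluctuation remainders to sit strictly below the explicit terms of the bound, for which the coefficient $\tfrac12$ in front of $\log\Gamma(\xi_0/\xi+1)$ leaves exactly the slack needed to absorb the $\hat{\beta}_n$--perturbations controlled by Corollary \ref{rate_comp}.
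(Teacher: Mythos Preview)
Your decomposition $G_n=G_n^{(1)}+G_n^{(2)}$ and your diagnosis of the ``chief obstacle'' are both accurate, but the workaround you sketch is circular and does not close the gap. In $G_n^{(2)}$ you replace $\sumN(Y_i-\beta_n(\xi))^{-1/\xi}$ by $n(\xi/\tau_n(\xi))^{1/\xi}$ via the first equation of \eqref{max_cross_section_cond}, leaving a $\frac{n}{\xi}\log(\tau_n(\xi)/\xi)$ term, and then propose to handle $\tau_n(\xi)$ by ``substituting \eqref{max_cross_section_cond} once more.'' But that substitution just returns the $\beta_n(\xi)$ sum you started from. Likewise, for $G_n^{(1)}$ the second equation of \eqref{max_cross_section_cond} does not involve $\sumN\log(Y_i-\beta_n(\xi))$ at all, so it cannot be used to eliminate that sum; the appeal to ``a Seitz-type comparison in the spirit of Lemma \ref{lem:monotone_on_xi}'' is not a concrete step here, because the structure that makes Seitz work in $F_n$ (powers $-1/\xi$ versus $-1/\hat\xi_n$ of the \emph{same} base $w_i(\hat{\btheta}_n)$) is absent from $G_n$, where the two sums have different bases $Y_i-\hat\beta_n$ and $Y_i-\beta_n(\xi)$.

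The device the paper uses, and which you are missing, is to differentiate $G_n$ in $\xi$ rather than bound it directly. The point is that $G_n'(\xi)$ contains a factor $\beta_n'(\xi)$ multiplied precisely by the left-hand side of the second equation in \eqref{max_cross_section_cond}, so that entire block vanishes identically. What remains in $G_n'(\xi)$ is a combination of weighted and unweighted log-averages of $Y_i-\beta_n(\xi)$ and $Y_i-\hat\beta_n$; a single application of Chebyshev's sum inequality to the $\beta_n(\xi)$ pair gives
\[
G_n'(\xi)\ \ge\ -\frac{1}{\xi^2}\sumN\log(Y_i-\hat\beta_n)+\frac{n}{\xi^2}\,\frac{\sumN(Y_i-\hat\beta_n)^{-1/\xi}\log(Y_i-\hat\beta_n)}{\sumN(Y_i-\hat\beta_n)^{-1/\xi}},
\]
a lower bound that depends only on $\hat\beta_n$. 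Integrating this from $\xi$ to $\hat\xi_n$ (Newton--Leibniz) and using $G_n(\hat\xi_n)=0$ yields
\[
-G_n(\xi)\ \ge\ \Bigl(\tfrac{1}{\hat\xi_n}-\tfrac{1}{\xi}\Bigr)\sumN\log w_i(\hat{\btheta}_n)\;-\;n\log\frac{1}{n}\sumN w_i^{-1/\xi}(\hat{\btheta}_n),
\]
after which your own ingredients---Proposition \ref{prop:pseudo_SLLN} for $\tfrac{1}{n}\sumN\log w_i(\hat{\btheta}_n)\asconv\xi_0\gamma$ and Proposition \ref{prop:uniform_consistency} for the second term---finish the job exactly as you anticipated, with the factor $\tfrac12$ absorbing the slack. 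The missing idea is thus not an asymptotic one but an algebraic one: pass to the derivative so the score equation kills the $\beta_n'(\xi)$ contribution, and let Chebyshev (not Seitz) trade $\beta_n(\xi)$ for $\hat\beta_n$ at the level of $G_n'$.
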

\begin{proof}
We first calculate
\begin{footnotesize}
\begin{align*}
    G'_n(\xi)=&\frac{\beta'_n(\xi)}{\xi}\left[(\xi+1)\sumN (Y_i-\beta_n(\xi))^{-1}-\frac{n\sumN (Y_i-\beta_n(\xi))^{-1-\frac{1}{\xi}}}{\sumN (Y_i-\beta_n(\xi))^{-\frac{1}{\xi}}}\right]+\frac{1}{\xi^2}\sumN\log \frac{Y_i-\beta_n(\xi)}{Y_i-\hat{\beta}_n}+\\
    &\frac{n\sumN (Y_i-\hat{\beta}_n)^{-\frac{1}{\xi}}\log(Y_i-\hat{\beta}_n)}{\xi^2\sumN (Y_i-\hat{\beta}_n)^{-\frac{1}{\xi}}}-\frac{n\sumN (Y_i-\beta_n(\xi))^{-\frac{1}{\xi}}\log(Y_i-\beta_n(\xi))}{\xi^2\sumN (Y_i-\beta_n(\xi))^{-\frac{1}{\xi}}},
\end{align*}
\end{footnotesize}
in which the term in the square brackets is 0 for $\xi>0$ due to the second equation of \eqref{max_cross_section_cond}.

For $\xi>0$, we deduce from Chebyshev's sum inequality that 
\begin{equation*}
    \sumN\log {(Y_i-\beta_n(\xi))}\sumN (Y_i-\beta_n(\xi))^{-\frac{1}{\xi}}\geq n\sumN (Y_i-\beta_n(\xi))^{-\frac{1}{\xi}}\log(Y_i-\beta_n(\xi)),
\end{equation*}
and thus 
\begin{equation}\label{eqn:G_n_approx}
    G'_n(\xi)\geq -\frac{1}{\xi^2}\sumN\log (Y_i-\hat{\beta}_n)+\frac{n\sumN (Y_i-\hat{\beta}_n)^{-\frac{1}{\xi}}\log(Y_i-\hat{\beta}_n)}{\xi^2\sumN (Y_i-\hat{\beta}_n)^{-\frac{1}{\xi}}}.
\end{equation}
Apply the Newton-Leibniz formula to both sides of \eqref{eqn:G_n_approx}, and we obtain, for $\xi<\hat{\xi}_n$, that
\begin{small}
\begin{equation}\label{eqn:G_n_approx2}
    \begin{split}
        G_n(\hat{\xi}_n)-G_n(\xi)&\geq \int_{\xi}^{\hat{\xi}_n}-\frac{1}{\xi^2}\sumN\log (Y_i-\hat{\beta}_n)+\frac{n\sumN (Y_i-\hat{\beta}_n)^{-\frac{1}{\xi}}\log(Y_i-\hat{\beta}_n)}{\xi^2\sumN (Y_i-\hat{\beta}_n)^{-\frac{1}{\xi}}}d\xi\\
        &=\left(\frac{1}{\hat{\xi}_n}-\frac{1}{\xi}\right)\sumN\log (Y_i-\hat{\beta}_n)+n\log\frac{\sumN (Y_i-\hat{\beta}_n)^{-\frac{1}{\hat{\xi}_n}}}{\sumN (Y_i-\hat{\beta}_n)^{-\frac{1}{\xi}}}\\
        &=\left(\frac{1}{\hat{\xi}_n}-\frac{1}{\xi}\right)\sumN\log w_i(\hat{\btheta}_n)-n\log\frac{\sumN w_i^{-\frac{1}{\xi}}(\hat{\btheta}_n)}{n}.
    \end{split}
\end{equation}
\end{small}
Proposition \ref{prop:uniform_consistency} ensures $\log\frac{1}{n}\sumN w_i^{-\frac{1}{\xi}}(\hat{\btheta}_n)\leq  2\log\Gamma\left(\frac{\xi_0}{\xi}+1\right)$ uniformly, and $\frac{1}{n}\sumN\log w_i(\hat{\btheta}_n)\leq 2\xi_0\gamma$ for sufficiently large $n$. Note that $G_n(\hat{\xi}_n)=0$, and \eqref{eqn:G_n_approx2} becomes
\begin{equation*}
    -G_n(\xi)\geq 2n\xi_0\gamma\left(\frac{1}{\hat{\xi}_n}-\frac{1}{\xi}\right)+\frac{n}{2}\log\Gamma\left(\frac{\xi_0}{\xi}+1\right),
\end{equation*}
which proves the lemma.

\end{proof}

\subsection{Proof of relation (\ref{A_cond})} 

\begin{proof}
When $\xi>\hat{\xi}_n$, $\beta_n(\xi)>\hat{\beta}_n$ and
\begin{equation*}
    -\frac{n\sumN\delta_i^{-\frac{1}{\xi}}\log\delta_i}{\sumN\delta_i^{-\frac{1}{\xi}}}+\sumN \log\delta_i \leq -\frac{n\sumN\hat{\delta}_i^{-\frac{1}{\xi}}\log\hat{\delta}_i}{\sumN\hat{\delta}_i^{-\frac{1}{\xi}}},
\end{equation*}
where $\hat{\delta}_i=\hat{\xi}_n(Y_i-\hat{\beta}_n)$ and $\delta_i=\xi(Y_i-\beta_n(\xi))$. Plug this inequality in \eqref{eqn:PL_dev}, and we obtain
\begin{equation*}
    PL'_n(\xi)\leq -\frac{n}{\xi}-\frac{n\sumN\hat{\delta}_i^{-\frac{1}{\xi}}\log\hat{\delta}_i}{\xi^2\sumN\hat{\delta}_i^{-\frac{1}{\xi}}}.
\end{equation*}
Apply the Newton-Leibniz formula, and we get
\begin{equation}\label{PL_n_approx1}
\begin{split}
    PL_n(\xi)-PL_n&(\hat{\xi}_n)\leq -\int_{\hat{\xi}_n}^{\xi}\frac{n}{\xi}d\xi-\int_{\hat{\xi}_n}^{\xi}\frac{n\sumN\hat{\delta}_i^{-\frac{1}{\xi}}\log\hat{\delta}_i}{\xi^2\sumN\hat{\delta}_i^{-\frac{1}{\xi}}}d\xi\\
    &=-n\log\frac{\xi}{\hat{\xi}_n}-n\log\frac{\sumN \hat{\delta}_i^{-\frac{1}{\xi}}}{\sumN \hat{\delta}_i^{-\frac{1}{\hat{\xi}_n}}}\\
    &=-n\log\frac{\xi}{\hat{\xi}_n}-n\left(\frac{1}{\xi}-\frac{1}{\hat{\xi}_n}\right)\log\hat{\tau}_n-n\log\left[\frac{1}{n}\sumN w_i^{-\frac{1}{\xi}}(\hat{\btheta}_n)\right].
\end{split}
\end{equation}
Proposition \ref{prop:uniform_consistency} ensures $\frac{1}{n}\sumN w_i^{-\frac{1}{\xi}}(\hat{\btheta}_n)\asconv \Gamma\left(\frac{\xi_0}{\xi}+1\right)$ uniformly. Therefore, for sufficiently large $n$, the right-hand side of \eqref{PL_n_approx1} can be further bounded as follows
\begin{equation*}
    PL_n(\xi)-PL_n(\hat{\xi}_n)\leq -\frac{n}{2}\left[\left(\frac{1}{\xi}-\frac{1}{\xi_0}\right)\log\tau_0+\log\Gamma\left(\frac{\xi_0}{\xi}\right)\right].
\end{equation*}
We know from the definition of $C_0$ in \eqref{K_compact} that the right-hand side of the above inequality is negative when $\xi>C_0\hat{\xi}_n$.

For $\xi<\hat{\xi}_n$, we evaluate $PL_n(\xi)-PL_n(\hat{\xi}_n)$ using \eqref{eqn:PL_xi}:
\begin{small}
\begin{align*}
        PL_n(\xi)-PL_n(\hat{\xi}_n)=&-n\log\xi-n\log\left[\frac{1}{n}\sumN (Y_i-\beta_n(\xi))^{-1/\xi}\right]-\frac{\xi+1}{\xi}\sumN\log(Y_i-\beta_n(\xi))\\
        &+n\log\hat{\xi}_n+n\log\left[\frac{1}{n}\sumN (Y_i-\hat{\beta}_n)^{-1/\hat{\xi}_n}\right]+\frac{\hat{\xi}_n+1}{\hat{\xi}_n}\sumN\log(Y_i-\hat{\beta}_n)\numberthis \label{PL_n_Diff}\\
        =&F_n(\hat{\xi}_n)-F_n(\xi)+G_n(\xi),
\end{align*}
\end{small}
where $F_n$ and $G_n$ are defined in Lemma \ref{lem:monotone_on_xi} and \ref{lem:A_n2} respectively. Now combine the results from those two lemmas to get
\begin{small}
\begin{equation}\label{eqn:PL_diff_pos}
    PL_n(\xi)-PL_n(\hat{\xi}_n)\leq -n\left[\frac{1}{2}\log\Gamma\left(\frac{\xi_0}{\xi}+1\right)-\log\frac{\hat{\xi}_n}{\xi}-\left(\frac{2\xi_0\gamma}{\hat{\xi}_n}-1\right)\left(1-\frac{\hat{\xi}_n}{\xi}\right)-1\right].
\end{equation}
\end{small}
Since $\hat{\xi}_n\asconv\xi_0$, we have $\log\frac{\xi_0}{\hat{\xi}_n}>-1$, $\frac{2\xi_0\gamma}{\hat{\xi}_n}-1>\frac{1}{10}$ and 
\begin{equation*}
    \left(\frac{2\xi_0\gamma}{\hat{\xi}_n}-1\right)\left(1-\frac{\hat{\xi}_n}{\xi}\right)\leq \frac{1}{10}\left(1-\frac{\xi_0}{2\xi}\right)
\end{equation*}
for sufficiently large $n$. Therefore, the right-hand side of \eqref{eqn:G_n_approx2} can be further bounded by
\begin{small}
\begin{equation}
\begin{split}
    PL_n(\xi)-PL_n(\hat{\xi}_n)&\leq -n\left[\frac{1}{2}\log\Gamma\left(\frac{\xi_0}{\xi}\right)-\frac{1}{2}\log\frac{\xi_0}{\xi}-\frac{1}{10}\left(1-\frac{\xi_0}{2\xi}\right)-2\right]\\
    &=-n\left[\frac{1}{2}\log\Gamma\left(\frac{\xi_0}{\xi}\right)-\frac{1}{2}\log\frac{\xi_0}{\xi}+\frac{\xi_0}{20\xi}-\frac{21}{10}\right].
\end{split}
\end{equation}
\end{small}
Similarly, we know from the definition of $c_0$ in \eqref{K_compact} that the right-hand side of the above inequality is negative when $\xi<c_0\hat{\xi}_n$.
\end{proof}




\subsection{Proof of relation (\ref{B_cond})}
\begin{proof}
We need to show that the maximizer $(\mu_n(\xi),\tau_n(\xi))$ for $\xi\in[c_0\xi_0,C_0\xi_0]$ is enfolded by the bounds in \eqref{K_compact}. For positive $\xi$, \eqref{max_cross_section_cond} can be expressed as,
\begin{equation}\label{tau_n_xi}
   \tau_n(\xi)=\xi\left[\frac{1}{n}\sumN (Y_i-\beta_n(\xi))^{-\frac{1}{\xi}}\right]^{-\xi}.
\end{equation}
It follows immediately that
\begin{equation*}
    \tau_n(\xi)>\xi\left[(Y_{(1)}-\beta_n(\xi))^{-\frac{1}{\xi}}\right]^{-\xi}=\xi(Y_{(1)}-\beta_n(\xi)),
\end{equation*}
which results in
\begin{equation*}
    \mu_n(\xi)=\beta_n(\xi)+\frac{\tau_n(\xi)}{\xi}>Y_{(1)}.
\end{equation*}
Since $Y_{(1)}\asconv\beta_0$, we have $\mu_n(\xi)>\beta_0-1$, which is the lower bound in \eqref{K_compact}. In the following, we validate the upper bound for $\mu_n(\xi)$ and the bounds for $\tau_n(\xi)$ via splitting $[c_0\xi_0,C_0\xi_0]$ into $[\hat{\xi}_n,C_0\xi_0]$ and $[c_0\xi_0,\hat{\xi}_n]$.

\textbf{A. Case $\xi\in [\hat{\xi}_n,C_0\xi_0]$}. Lemma \ref{lem:beta_xi_mono} implies $\hat{\beta}_n\leq \beta_n(\xi)<Y_{(1)}$, and
\begin{equation*}
    \frac{1}{n}\sumN (Y_i-\beta_n(\xi))^{-\frac{1}{\xi}}\geq \frac{1}{n}\sumN (Y_i-\hat{\beta}_n)^{-\frac{1}{\xi}},
\end{equation*}
which results in
\begin{equation*}
    \tau_n(\xi)\leq \xi\left[\frac{1}{n}\sumN (Y_i-\hat{\beta}_n)^{-\frac{1}{\xi}}\right]^{-\xi}=\frac{\hat{\tau}_n\xi}{\hat{\xi}_n}\left[\frac{1}{n}\sumN w^{-1/\xi}_i(\hat{\btheta}_n)\right]^{-\xi}.
\end{equation*}
Since $\xi\in [\hat{\xi}_n,C_0\xi_0]$, $\frac{1}{\xi}$ can be enclosed in $[\frac{1}{C_0\xi_0}, \frac{2}{\xi_0}]$ for large $n$. By the uniform consistency in Proposition \ref{prop:uniform_consistency}, there almost surely exists $N>0$ such that for all $n>N$,
\begin{equation*}
    \frac{1}{n}\sumN w^{-\frac{1}{\xi}}_i(\hat{\btheta}_n)>\frac{1}{2}\Gamma\left(\frac{\xi_0}{\xi}+1\right)>0.4\text{  for }\xi\in [\hat{\xi}_n,C_0\xi_0],
\end{equation*}
where we utilize the fact that $\Gamma(x)>0.8$ for all $x>0$. Further, we have
\begin{equation*}
    \tau_n(\xi)<\frac{\hat{\tau}_n\xi}{\hat{\xi}_n}0.4^{-\xi}.
\end{equation*}
By the strong consistency of $\hat{\btheta}_n$, $\frac{\hat{\tau}_n\xi}{\hat{\xi}_n}<\frac{2.5\tau_0\xi}{\xi_0}\leq 2.5\tau_0C_0$ and $Y_{(1)}<\beta_0+1$ for large $n$. Thus,
\begin{equation}\label{tau_n_upper}
    \tau_n(\xi)<\tau_0C_02.5^{C_0\xi_0+1},\text{ and }\mu_n(\xi)=\beta_n(\xi)+\frac{\tau_n(\xi)}{\xi}<\beta_0+1+\frac{\tau_0C_0}{\xi_0}2.5^{C_0\xi_0+1},
\end{equation}
where we have used the relation $\beta_n(\xi)<Y_{(1)}$ when $0<\xi<n-1$.

\noindent\textbf{B. Case $\xi\in [c_0\xi_0,\hat{\xi}_n]$}. Denote $\kappa_0=\min\{\frac{1}{2},\frac{1}{2\xi_0}\}$, and we have $1-\kappa_0\xi_0>0$. We deduce by the power means inequality that 
\begin{equation}\label{for_mu}
    \frac{\tau_n(\xi)}{\xi}=\left[\frac{1}{n}\sumN (Y_i-\beta_n(\xi))^{-\frac{1}{\xi}}\right]^{-\xi}<\left[\frac{1}{n}\sumN (Y_i-\beta_n(\xi))^{\kappa_0}\right]^{\frac{1}{\kappa_0}}.
\end{equation}
For $\xi<\hat{\xi}_n$, $\beta_n(\xi)<\hat{\beta}_n$. Apply the classic $c_r$-inequality \eqref{Cr_ineq} twice to get
\begin{equation*}
    \frac{1}{n}\sumN(Y_i-\beta_n(\xi))^{\kappa_0}\leq \frac{1}{n}\sumN(Y_i-\hat{\beta}_n)^{\kappa_0}+(\hat{\beta}_n-\beta_n(\xi))^{\kappa_0}
\end{equation*}
and 
\begin{small}
\begin{equation*}
     \left[\frac{1}{n}\sumN(Y_i-\hat{\beta}_n)^{\kappa_0}+(\hat{\beta}_n-\beta_n(\xi))^{\kappa_0}\right]^{\frac{1}{\kappa_0}} \leq 2^{\frac{1}{\kappa_0}-1}\left[\frac{1}{n}\sumN(Y_i-\hat{\beta}_n)^{\kappa_0}\right]^{\frac{1}{\kappa_0}}+2^{\frac{1}{\kappa_0}-1}(\hat{\beta}_n-\beta_n(\xi)),
\end{equation*}
\end{small}
where $2^{\frac{1}{\kappa_0}-1}>1$. Plugging the results back in \eqref{for_mu}, we obtain
\begin{equation*}
    \frac{\tau_n(\xi)}{\xi}\leq 2^{\frac{1}{\kappa_0}-1}\left[\frac{1}{n}\sumN(Y_i-\hat{\beta}_n)^{\kappa_0}\right]^{\frac{1}{\kappa_0}}+2^{\frac{1}{\kappa_0}-1}(\hat{\beta}_n-\beta_n(\xi)).
\end{equation*}
In the supplementary material, we show that for $\xi\in[c_0\xi_0,\hat{\xi}_n]$,
\begin{equation*}
    \hat{\beta}_n-\beta_n(\xi)<\frac{4\xi_0^{\frac{1}{\xi_0}-2}\kappa_1^{-\frac{1}{c_0\xi_0}}}{\tau_0c_0^3},
\end{equation*}
where $\kappa_1=\min\{1,c_0\xi_0\}$. In the meantime, for $n$ sufficiently large,
\begin{equation*}
    2^{\frac{1}{\kappa_0}-1}\left[\frac{1}{n}\sumN(Y_i-\hat{\beta}_n)^{\kappa_0}\right]^{\frac{1}{\kappa_0}}=\frac{2^{\frac{1}{\kappa_0}-1}\hat{\tau}_n}{\hat{\xi}_n}\left[\frac{1}{n}\sumN w_i^{\kappa_0}(\hat{\btheta}_n)\right]^{\frac{1}{\kappa_0}}\leq \frac{2^{\frac{1}{\kappa_0}}\tau_0}{\xi_0}\Gamma^{\frac{1}{\kappa_0}}(1-\kappa_0\xi_0).
\end{equation*}

Consequently,
\begin{equation*}
\begin{split}
    \tau_n(\xi)&< 2^{\frac{1}{\kappa_0}}\tau_0\Gamma^{\frac{1}{\kappa_0}}(1-\kappa_0\xi_0)+\frac{2^{\frac{1}{\kappa_0}+1}\xi_0^{\frac{1}{\xi_0}-1}\kappa_1^{-\frac{1}{c_0\xi_0}}}{\tau_0c_0^3},\\
    \mu_n(\xi)=\beta_n(\xi)+\frac{\tau_n(\xi)}{\xi}&<2\beta_0+\frac{2^{\frac{1}{\kappa_0}}\tau_0}{\xi_0}\Gamma^{\frac{1}{\kappa_0}}(1-\kappa_0\xi_0)+\frac{2^{\frac{1}{\kappa_0}+1}\xi_0^{\frac{1}{\xi_0}-2}\kappa_1^{-\frac{1}{c_0\xi_0}}}{\tau_0c_0^3}.
\end{split}
\end{equation*}

\noindent Combining the previous two cases, we complete the proof of relation \eqref{B_cond}.
\end{proof}

\end{appendix}




\begin{supplement}
\textbf{Supplement to ``Uniqueness and global optimality of the maximum likelihood estimator for the generalized extreme value distribution"}.
The supplementary material contains  additional technical results and proofs that complete the proofs in the appendices and extend the proof of relation \eqref{B_cond} to the negative $\xi_0$.
\end{supplement}




\end{document}